\documentclass[11pt]{article}

\usepackage[compress]{natbib}
\usepackage{amsmath,amssymb,amsthm,amsfonts}
\usepackage[margin=1in]{geometry}
\usepackage{setspace}
\usepackage{subcaption}
\usepackage{bm}
\usepackage[dvipsnames]{xcolor}
\usepackage{float}
\usepackage{hyperref}
\hypersetup{colorlinks=true, citecolor=MidnightBlue, linkcolor=red!80!black, urlcolor=MidnightBlue}
\usepackage{pgfplots}
\pgfplotsset{compat=1.18}
\usepackage{url}            
\usepackage{booktabs}       
\usepackage{graphicx}
\usepackage{thm-restate}
\usepackage[export]{adjustbox}

\usepackage{microtype}
\usepackage{mathpazo} 
\usepackage[scaled]{helvet} 
\usepackage{courier} 
\normalfont
\usepackage[T1]{fontenc}

\usepackage{threeparttable}
\allowdisplaybreaks[4] 
\usepackage[shortlabels]{enumitem}

\usepackage{tikz}
\usetikzlibrary{fit}
\tikzset{%
	highlight/.style={rectangle,blend mode = multiply,draw=blue!90!black,thick,rounded corners = 0.3 mm,inner sep=0.5pt}
}

\usepackage{tcolorbox}

\newcommand{\diff}{\mathrm{d}}

\def\N{{\mathbb{N}}}

\def\R{{\mathbb{R}}}

\def\bI{{\mathbf{I}}}

\def\bU{{\mathbf{U}}}
\def\bV{{\mathbf{V}}}
\def\bW{{\mathbf{W}}}

\def\cA{{\cal A}}

\def\cF{{\cal F}}

\def\cO{{\cal O}}

\def\cT{{\cal T}}

\def\ord{\bm{\pi}}

\def\x{\bm{x}}
\def\w{\bm{w}}
\def\y{\bm{y}}
\def\z{\bm{z}}
\def\bz{{\mathbf 0}}

\usepackage{thmtools}

\declaretheoremstyle[parent=section]{definitionwithend}

\declaretheorem[style=definitionwithend]{theorem}
\declaretheorem[style=definitionwithend]{proposition}
\declaretheorem[style=definitionwithend]{definition}

\declaretheorem[style=definitionwithend]{remark}

\declaretheorem[style=definitionwithend]{lemma}
\declaretheorem[style=definitionwithend]{fact}

\usepackage{mathtools}
\usepackage{pifont}
\usepackage[nameinlink]{cleveref}
\usepackage[titletoc]{appendix}
\declaretheorem[style=definitionwithend]{condition}

\crefname{assumption}{Assumption}{Assumptions}
\crefname{lemma}{Lemma}{Lemmas}
\crefname{theorem}{Theorem}{Theorems}
\crefname{corollary}{Corollary}{Corollaries}
\crefname{proposition}{Proposition}{Propositions}
\crefname{condition}{Condition}{Conditions}
\crefname{claim}{Claim}{Claims}
\crefname{figure}{Figure}{Figures}
\crefname{remark}{Remark}{Remarks}
\crefname{section}{Section}{Sections}
\crefname{example}{Example}{Examples}
\crefname{definition}{Definition}{Definitions}
\crefname{table}{Table}{Tables}
\crefname{equation}{}{}
\crefname{enumi}{}{}
\crefname{appendix}{Appendix}{Appendices}
\Crefname{appendix}{Appendix}{Appendices}

\usetikzlibrary{arrows.meta,positioning}

\def\sA{{\mathsf{A}}}

\def\sZ{{\mathsf{Z}}}
\DeclareMathOperator{\supp}{supp}
\DeclareMathOperator{\Lip}{Lip}
\DeclareMathOperator{\zr}{zr}
\newcommand{\norm}[1]{\left\lVert #1\right\rVert}

\title{Lower Bounds for Nonconvex–P\L{} Minimax Optimization}

\date{\today}
\author{%
 Siyu Pan \qquad Jiajin Li\\[0.75em]
Sauder School of Business,\\
The University of British Columbia\\[0.5em]
\small \texttt{\{siyu.pan,jiajin.li\}@sauder.ubc.ca}
}

\begin{document}
\maketitle


\begin{abstract}
We study the deterministic first-order oracle complexity of finding
stationary points of the value function in smooth
nonconvex-Polyak-\L{}ojasiewicz (NC-P\L{}) minimax optimization. We
assume that the objective is jointly $\ell$-smooth and satisfies the
$\mu$-P\L{} condition in the dual variable, and that its value function
$\Phi(\x):=\max_{\y}f(\x;\y)$ satisfies
$\Phi(\bz)-\inf_{\x}\Phi(\x)\leq\Delta$. When
$\kappa:=\ell/\mu\gtrsim1$ and
$0<\epsilon^2\lesssim\ell\Delta$, we prove that every deterministic
first-order method requires
$\Omega(\ell\Delta\kappa/\epsilon^2)$ oracle queries in the worst case to
find $\x$ satisfying $\|\nabla\Phi(\x)\|\leq\epsilon$. This rate matches
the known upper bound in its dependence on
$(\ell,\Delta,\kappa,\epsilon)$ \citep{yang2022} and shows that the linear dependence on
$\kappa$ is unavoidable for deterministic first-order methods.
\end{abstract}

\noindent\textbf{Key words.}
minimax optimization, nonconvex optimization,
Polyak-\L{}ojasiewicz condition, first-order oracle complexity, lower bounds

\medskip
\noindent\textbf{AMS subject classifications.}
90C26, 90C47, 90C60

\section{Introduction}
\label{sec:introduction}
A fundamental distinction between strong convexity and the
Polyak-\L{}ojasiewicz (P\L{}) condition~\citep{polyak1963gradient} already appears in smooth
minimization. For an $\ell$-smooth and $\mu$-strongly convex objective with condition
number $\kappa:=\ell/\mu$, accelerated gradient methods attain the optimal
deterministic first-order oracle complexity
$\Theta(\sqrt{\kappa}\log(1/\delta))$ for reducing the objective gap by a
factor $\delta$~\citep{nemirovski,nesterov}. 
Over the broader class of $\ell$-smooth objectives satisfying the
$\mu$-P\L{} condition, the corresponding worst-case complexity is
$\Theta(\kappa\log(1/\delta))$, and gradient descent is optimal up to
numerical constants~\citep{karimi2016,yue2023}. 

This separation motivates the analogous question in smooth minimax optimization. We consider 
\begin{equation*}
    \min_{\x\in\R^m}\max_{\y\in\R^n} f(\x;\y),
    \tag{P}\label{eq:prob}
\end{equation*} 
where $f$ is jointly
$\ell$-smooth and satisfies the $\mu$-P\L{} condition with respect to the
dual variable. Let $\Phi(\x):=\max_{\y\in\R^n}f(\x;\y)$ denote the value
function, and measure stationarity by $\|\nabla\Phi(\x)\|$. We assume that
the initial  gap satisfies
$\Phi(\bz)-\inf_{\x\in\R^m}\Phi(\x)\leq\Delta$.

 When $f(\x;\cdot)$ is $\mu$-strongly concave for every $\x$, Problem~\eqref{eq:prob} belongs to the nonconvex--strongly-concave (NC-SC) setting. Since strong concavity implies the P\L{} condition, the NC-SC class is a subclass of the NC-P\L{} class considered here. For NC-SC problems, a lower bound of $\Omega(\ell\Delta\sqrt{\kappa}/\epsilon^2)$ is known for zero-respecting algorithms~\citep{li2021} and deterministic linear-span algorithms~\citep{zhang2021complexity}. On the upper-bound side, Minimax-PPA achieves $\mathcal{O}\bigl(\ell\Delta\sqrt{\kappa}\log^2(1/\epsilon)/\epsilon^{2}\bigr)$ oracle complexity~\citep{lin2020linear}. 
Catalyst-EG/OGDA further remove the accuracy-dependent logarithmic overhead and achieve $\mathcal{O}\bigl(\ell\Delta\sqrt{\kappa}\log^2(\kappa)/\epsilon^2\bigr)$ oracle complexity~\citep{zhang2021complexity}, thereby matching the lower bound up to a factor logarithmic in $\kappa$. Thus, up to logarithmic factors, the optimal dependence on the condition number is $\sqrt{\kappa}$.

For the broader NC-P\L{} class, the algorithmic landscape looks superficially similar:
AGDA requires $\mathcal{O}(\ell\Delta\kappa^{2}/\epsilon^{2})$ oracle queries~\citep{lin2020gradient}, whereas
Smoothed GDA finds an $\epsilon$-stationary point of the value function in
$\mathcal{O}(\ell\Delta\kappa/\epsilon^{2})$ \citep{yang2022}.  What is missing is
the matching obstruction: it remained unclear whether the linear dependence on $\kappa$
is intrinsic to the dual P\L{} condition or merely an artifact of the available
analyses, since the NC-SC results above leave room for a further $\sqrt{\kappa}$
improvement. 

We resolve this question by proving that, when $\kappa\gtrsim 1$ and
$0<\epsilon^{2}\lesssim\ell\Delta$, every deterministic first-order method requires
$\Omega(\ell\Delta\kappa/\epsilon^{2})$ oracle queries in the worst case. The bound
applies to arbitrary deterministic first-order methods---a strictly larger class than
the zero-respecting and linear-span classes covered by the NC-SC lower bounds above.
Together with the upper bound of \citet{yang2022}, it establishes the tight
deterministic first-order oracle complexity $\Theta(\ell\Delta\kappa/\epsilon^{2})$ in
the parameter regime considered here. In particular, the improvement from $\kappa^{2}$
to $\sqrt{\kappa}$ available under strong concavity stops at $\kappa$ under the dual
P\L{} condition: the separation between the two classes is real, and not an artifact of
suboptimal algorithms.



Our proof follows the chain composition strategy of \citet{li2021}.
However, existing P\L{} hard instances for smooth minimization, such as
that of \citet{yue2023}, are not directly composable with an outer
nonconvex chain. The required inner block must simultaneously reproduce a
prescribed term in the outer value function after dual maximization, hide
the gradient with respect to the successor primal coordinate until the
dual chain has been traversed, and accommodate the scale of the outer
term without losing uniform smoothness or the desired P\L{} modulus.
Existing standalone minimization hard instances do not provide these
properties simultaneously.

To meet these requirements, we construct a scalable length-$N$ dual
P\L{} chain. Each such chain delays the revelation of one primal
coordinate, while dual maximization recovers the corresponding term in
the outer value function exactly. Figure~\ref{fig:hard-instance-chain}
illustrates the resulting composition.

\begin{figure}[t]
    \centering
\includegraphics[width=0.6\textwidth]{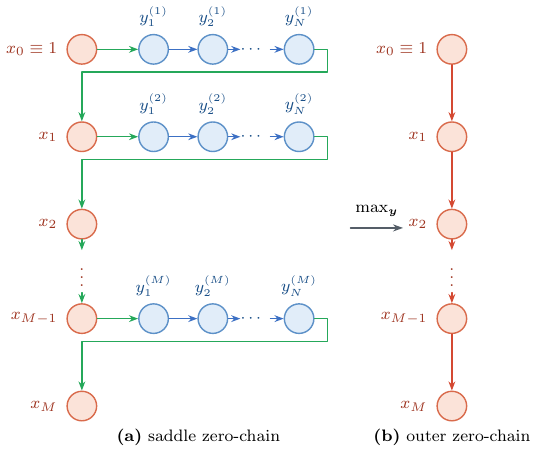}
    \caption{
    Schematic of the hard instance.
    In the saddle objective, each transition from $x_{i-1}$ to $x_i$ is
    mediated by a length-$N$ dual chain
    $(y^{(i)}_1,\ldots,y^{(i)}_N)$.
    Maximizing over the dual variables collapses these blocks and recovers
    the outer primal zero-chain as the value function.
    }
    \label{fig:hard-instance-chain}
\end{figure}

The dual blocks have smoothness bounded independently of $N$ and a
P\L{} modulus of order $1/N$, allowing us to take
$N=\Theta(\kappa)$. Inserting one block into each of the
$M=\Theta(\ell\Delta/\epsilon^2)$ links of the nonconvex zero-chain
construction of \citet{carmon2019lower} yields
$\Omega(\ell\Delta\kappa/\epsilon^2)$ sequentially hidden coordinates.
Finally, we adapt the finite-horizon resisting-oracle framework
of \citet[Lemma~7]{carmon2019lower} to the primal-dual setting; see also
\citet{li2026Weakly} for a recent formulation. This reduction extends the
lower bound from zero-respecting algorithms to arbitrary deterministic
first-order methods.

\paragraph{Notation.}
Bold lowercase letters denote vectors. Scalar coordinates are written without boldface; for example,
\(z_i\) denotes the \(i\)th coordinate of \(\z\). The semicolon in \(f(\x;\y)\)
separates the minimization and maximization variables. We take
\(\N:=\{1,2,\ldots\}\) and \(\N_0:=\N\cup\{0\}\). For \(T\in\N\),
we use the shorthand \([T]:=\{1,\ldots,T\}\) and set
\([0]:=\varnothing\). We use the convention that
\(\inf\varnothing=+\infty\). We write \(\norm{\cdot}\) for the Euclidean
norm on vectors and the induced operator norm on matrices. The zero vector
in \(\R^d\) is denoted by \(\bz_d\), or simply \(\bz\) when its dimension
is clear. The all-ones vector is denoted by \(\mathbf{1}\) whenever its
dimension is clear. We denote the \(d\times d\) identity matrix by
\(\bI_d\). For a scalar \(t\), define \(t^-:=\max\{-t,0\}\). For
\(\z\in\R^d\), define
\(\z^-:=(z_1^-,\ldots,z_d^-)\). For a differentiable function
\(F:\R^d\to\R\), we write
\(\nabla_{z_j}F(\z):=[\nabla F(\z)]_j\) for \(j\in[d]\). For a Lipschitz
function \(F\), we denote its Lipschitz constant by \(\Lip(F)\). The symbols
\(\cO\), \(\Omega\), \(\Theta\), and \(\lesssim\) hide positive numerical
constants. For any positive weight vector
\(\w=(w_1,\ldots,w_d)\in\R_{++}^d\) and \(\z\in\R^d\), define
\(\norm{\z}_{\w}^2:=\sum_{j=1}^d w_jz_j^2\)
and
\(\norm{\z}_{*,\w}^2:=\sum_{j=1}^d z_j^2/w_j\).
Then $\norm{\cdot}_{*,\w}$ is the dual norm of $\norm{\cdot}_{\w}$.
For integers \(d'\geq d\), define
\(\operatorname O(d',d)
:=
\{\mathbf U\in\R^{d'\times d}\mid
\mathbf U^\top\mathbf U=\bI_d\}\),
the set of column-orthonormal matrices in \(\R^{d'\times d}\).
A coordinate ordering is a permutation
\(\ord=(\pi_1,\ldots,\pi_d)\) of $[d]$. 
For example, the permutation \(\ord=(2,3,1)\) orders the coordinates of
\(\z=(z_1,z_2,z_3)\) as \((z_2,z_3,z_1)\).
For any \(\z\in\R^d\), define its support under \(\ord\) by
\(
    \supp_{\ord}(\z)
    :=
    \{k\in[d]\mid z_{\pi_k}\ne0\}.
\)
Under the identity permutation
\(\ord=(1,\ldots,d)\), we simply write
\(
    \supp(\z):=\{k\in[d]\mid z_k\ne0\}.
\)
Whenever a coordinate ordering is involved, we identify the pair
\((\x,\y)\) with its canonical concatenation, as in
\(\supp(\x,\y)\) and \(\supp_{\ord}(\x,\y)\).
\paragraph{Organization.}
The remainder of the paper is organized as follows.
Section~\ref{sec:pre-lb} introduces the problem class, oracle
model, and algorithm classes. Section~\ref{sec:framework} states the main
results and identifies four certificates sufficient for the lower bound.
Sections~\ref{sec:construction} and~\ref{sec:verification} construct and
verify the hard instance, respectively. Section~\ref{sec:close} concludes
the paper, and Appendix~\ref{app:ncpl-deferred-estimates} contains the
deferred technical proofs.

\section{Preliminaries}
\label{sec:pre-lb}
To study Problem~\eqref{eq:prob}, we introduce the class of NC-P\L{} functions.
\begin{definition}[NC-P\L{} function class]
\label{def:function-class}
Given $\ell,\mu,\Delta>0$, let
 $\cF(\ell,\mu,\Delta)$ be the union, over
 $(m,n)\in\N\times\N$, of the classes of functions
 $f:\R^m\times\R^n\to\R$ satisfying the following properties. For each
 such function, define $\Phi(\x):=\max_{\y\in\R^n}f(\x;\y)$.
\begin{enumerate}[(i)]
    \item {($\ell$-smoothness)} $f$ is jointly $\ell$-smooth on $\R^m\times \R^n$, i.e., for all $(\x_1,\y_1),(\x_2,\y_2)\in \R^m\times \R^n$,
    \[
        \bigl\|\nabla f(\x_1;\y_1)-\nabla f(\x_2;\y_2)\bigr\|
        \le \ell\bigl\|(\x_1,\y_1)-(\x_2,\y_2)\bigr\|.
    \]
    \item {($\mu$-dual P\L{})} For every fixed $\x\in\R^m$, 
     the problem $\max_{\y\in\R^n}f(\x;\y)$ has a nonempty solution set and a finite optimal value. Moreover,
    \[
        \frac12\norm{\nabla_{\y} f(\x;\y)}^2
        \ge\mu\left(\max_{\widetilde{\y}\in\R^n}f(\x;\widetilde{\y})-f(\x;\y)\right)
    \]
   holds for all $\x\in \R^m$ and $\y\in\R^n$. 
    \item {(Initial gap)} The initialization satisfies
        $\Phi(\bz)-\inf_{\x\in \R^m}\Phi(\x)\le \Delta$. 
\end{enumerate}
\end{definition}
The standard stationarity measure used throughout the rest of the paper is defined as follows.
\begin{definition} 
\label{def:stationary}
Given $\epsilon>0$, a point $\x\in\R^m$ is an
$\epsilon$-stationary point of Problem~\eqref{eq:prob} if
\[\|\nabla \Phi(\x)\|\leq\epsilon.\]
\end{definition} 
Definition~\ref{def:stationary} is well-defined for Problem~\eqref{eq:prob} since the value function of a smooth NC-P\L{} function is differentiable~\citep[Lemma A.5]{nouiehed2019}.


We use the first-order saddle oracle considered by
\citet{li2021}.
\begin{definition}[First-order saddle oracle]
\label{def:first-order-oracle}
For a differentiable function \(f:\R^m\times\R^n\to\R\), the first-order
saddle oracle at \((\x,\y)\in\R^m\times\R^n\) returns
\[
        \mathbb O_f(\x;\y)
        :=
        \bigl(
        f(\x;\y),
        \nabla_{\x} f(\x;\y),
        \nabla_{\y} f(\x;\y)
        \bigr).
\]
\end{definition}


We call a differentiable saddle function admissible if its
value function is finite and differentiable.
For a first-order algorithm \(\sA\) and an admissible saddle function \(f\), let \(\sA_{\x}^{(t)}[f]\) and \(\sA_{\y}^{(t)}[f]\) denote, respectively, the primal and dual components of its \(t\)-th oracle query.
We first consider zero-respecting algorithms that access \(f\) through the oracle
$\mathbb O_f$.

\begin{definition}[First-order zero-respecting algorithm]
\label{def:zero-respecting}
A first-order algorithm $\sA$ is zero-respecting if, for each admissible
$f$, it starts from
$(\sA_{\x}^{(0)}[f],\sA_{\y}^{(0)}[f])=\bz$ and its query sequence
satisfies
\[
\supp\bigl(\sA_{\x}^{(t)}[f],\sA_{\y}^{(t)}[f]\bigr)
 \subseteq
 \bigcup_{s<t}
 \supp\Bigl(
     \nabla_{\x}f\bigl(\sA_{\x}^{(s)}[f];\sA_{\y}^{(s)}[f]\bigr),
     \nabla_{\y}f\bigl(\sA_{\x}^{(s)}[f];\sA_{\y}^{(s)}[f]\bigr)
 \Bigr),
 \qquad \forall t\geq1.
\]
We denote the class of such algorithms by $\cA_{\zr}$.
\end{definition}

We next define \(\cA_{\det}\), the class of deterministic first-order algorithms.
\begin{definition}[First-order deterministic algorithm]
\label{def:deterministic-algorithm}
A first-order algorithm $\sA$ is deterministic if there exists a sequence
of maps $\{\Gamma_t\}_{t\geq1}$ such that, for every admissible $f$, it
starts from $(\sA_{\x}^{(0)}[f],\sA_{\y}^{(0)}[f])=\bz$ and its query
sequence satisfies
\[
 \bigl(\sA_{\x}^{(t)}[f],\sA_{\y}^{(t)}[f]\bigr)
 =
 \Gamma_{t}
 \left(
   \Bigl(
       \mathbb O_f\bigl(\sA_{\x}^{(s)}[f];\sA_{\y}^{(s)}[f]\bigr)
   \Bigr)_{s<t}
 \right),\qquad \forall t\geq1.
\]
We denote the class of such algorithms by $\cA_{\det}$.
\end{definition}

We measure the worst-case number of oracle queries as follows. 
\begin{definition}
\label{def:complexity-algorithm-class}
Fix $\epsilon>0$. Let $\cA\subseteq\cA_{\det}$ be any nonempty subclass of deterministic first-order algorithms, and let $\cF$ be a nonempty class of admissible saddle functions. The oracle complexity of $\cA$ over $\cF$ is defined by
\[
        \cT_\epsilon(\cA,\cF)
        :=
        \inf_{\sA\in\cA}
        \sup_{f\in\cF}
         \inf
        \left\{
        t\in\N_0\;\middle|\;
        \sA_{\x}^{(t)}[f]\;
        {\text{is an }\epsilon\text{-stationary point for }f}
        \right\}.
\]
\end{definition}
To construct the hard instance, we next extend the standard
zero-chain structure to the joint primal-dual variable.
\begin{definition}[First-order saddle zero-chain]
\label{def:saddle-zero-chain}
Let \(f:\R^m\times\R^n\to\R\) be differentiable, and let
\(\ord=(\pi_1,\ldots,\pi_{m+n})\) be a coordinate ordering of
$(\x,\y)$. We say that \(f\)
is a first-order saddle zero-chain with respect to 
\(\ord\) if, for every
\(i\in[m+n]\) and every \((\x,\y)\in\R^m\times\R^n\),
\[
    \supp_{\ord}(\x,\y)\subseteq[i-1]
    \quad\Longrightarrow\quad
    \supp_{\ord}\bigl(
        \nabla_{\x}f(\x;\y),
        \nabla_{\y}f(\x;\y)
    \bigr)
    \subseteq[i].
\]
\end{definition}
Our lower bound is first established for zero-respecting algorithms. To extend it to arbitrary deterministic first-order methods, we employ a finite-horizon resisting-oracle reduction, following the framework of \citet{carmon2019lower,li2026Weakly}. In the present saddle-oracle setting, the reduction consists of two components: separate orthogonal embeddings of the primal and dual spaces that preserve the NC-P\L{} function class and the value-function stationarity measure, and a finite-horizon transcript simulation that relates an arbitrary deterministic method on a rotated instance to a zero-respecting method on the original instance. We first establish the required rotation invariance.

For a function \(f:\R^m\times\R^n\to\R\) and matrices
\(\bU\in\operatorname O(m',m)\) and
\(\bV\in\operatorname O(n',n)\), define the rotated function
\(f_{\bU,\bV}:\R^{m'}\times\R^{n'}\to\R\) by
\begin{equation*}
    f_{\bU,\bV}(\x;\y)
    :=
    f(\bU^\top\x;\bV^\top\y).
\end{equation*}
Let
\(
    \Phi_{\bU,\bV}(\x)
    :=\max_{\y\in\R^{n'}}f_{\bU,\bV}(\x;\y)
\)
denote its value function.

\begin{lemma}[Rotation invariance]
\label{lem:isometric-invariance}
Let \(f\in\cF(\ell,\mu,\Delta)\) be defined on
\(\R^m\times\R^n\), and let
\(\bU\in\operatorname O(m',m)\) and
\(\bV\in\operatorname O(n',n)\). Then
\(f_{\bU,\bV}\in\cF(\ell,\mu,\Delta)\). Moreover, for every
\(\x\in\R^{m'}\),
\[
    \|\nabla\Phi_{\bU,\bV}(\x)\|
    =
    \|\nabla\Phi(\bU^\top\x)\|.
\]
\end{lemma}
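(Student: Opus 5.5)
We verify the three defining properties of $\cF(\ell,\mu,\Delta)$ for $f_{\bU,\bV}$ directly from the chain rule, and then compute the value function explicitly. The key identity is
\[
\nabla f_{\bU,\bV}(\x;\y)=\bigl(\bU\nabla_{\x}f(\bU^\top\x;\bV^\top\y),\;\bV\nabla_{\y}f(\bU^\top\x;\bV^\top\y)\bigr).
\]
Since $\bU,\bV$ have orthonormal columns, they are isometries, so $\|\bU\bm a\|=\|\bm a\|$ and $\|\bV\bm b\|=\|\bm b\|$. Meanwhile $\bU^\top,\bV^\top$ are contractions, so $\|\bU^\top\bm a\|\le\|\bm a\|$.

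\textbf{Smoothness.} The gradient difference at $(\x_1,\y_1)$ and $(\x_2,\y_2)$ is the block-diagonal isometry $\mathrm{diag}(\bU,\bV)$ applied to the corresponding difference of $\nabla f$. Its norm is therefore at most $\ell\|(\bU^\top(\x_1-\x_2),\bV^\top(\y_1-\y_2))\|\le\ell\|(\x_1,\y_1)-(\x_2,\y_2)\|$.

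\textbf{Value function.} Fix $\x$. As $\y$ ranges over $\R^{n'}$, the vector $\bV^\top\y$ ranges over all of $\R^n$, because $\bV^\top$ is surjective: $\bV^\top\bV=\bI_n$. Hence
\[
\Phi_{\bU,\bV}(\x)=\max_{\y\in\R^{n'}}f(\bU^\top\x;\bV^\top\y)=\Phi(\bU^\top\x).
\]
The maximum is attained at $\y=\bV\y^\star$ for any maximizer $\y^\star$ of $f(\bU^\top\x;\cdot)$, so the solution set is nonempty and the optimal value is finite.

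\textbf{Dual P\L{}.} We have $\|\nabla_{\y}f_{\bU,\bV}(\x;\y)\|=\|\nabla_{\y}f(\bU^\top\x;\bV^\top\y)\|$. The gap satisfies
\[
\Phi_{\bU,\bV}(\x)-f_{\bU,\bV}(\x;\y)=\Phi(\bU^\top\x)-f(\bU^\top\x;\bV^\top\y).
\]
The P\L{} inequality for $f$ at the point $(\bU^\top\x,\bV^\top\y)$ therefore transfers verbatim.

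\textbf{Initial gap.} We have $\Phi_{\bU,\bV}(\bz)=\Phi(\bz)$. Moreover, $\inf_{\x}\Phi(\bU^\top\x)=\inf_{\x'}\Phi(\x')$, since $\bU^\top$ is surjective onto $\R^m$. Hence the gap bound $\Delta$ is preserved.

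\textbf{Gradient norm.} The function $\Phi$ is differentiable by \citet[Lemma A.5]{nouiehed2019}. Since $\Phi_{\bU,\bV}=\Phi\circ\bU^\top$, the chain rule gives
\[
\nabla\Phi_{\bU,\bV}(\x)=\bU\nabla\Phi(\bU^\top\x),
\]
and isometry of $\bU$ yields the claimed norm identity.

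The only step that needs any care is the surjectivity of $\bU^\top$ and $\bV^\top$, which follows from $\bU^\top\bU=\bI$. This is what makes both the maximization and the infimum in the gap condition unchanged. I do not expect a genuine obstacle here; the rest is routine.
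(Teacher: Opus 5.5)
Your proposal is correct and follows essentially the same argument as the paper. The shared steps are: the chain-rule gradient formula for smoothness, surjectivity of $\bV^\top$ (with maximizer $\bV\y^\star$) to obtain $\Phi_{\bU,\bV}=\Phi\circ\bU^\top$, verbatim transfer of the P\L{} inequality, surjectivity of $\bU^\top$ for the initial gap, and $\nabla\Phi_{\bU,\bV}(\x)=\bU\nabla\Phi(\bU^\top\x)$ for the norm identity. Your smoothness step, which combines the isometry of $\mathrm{diag}(\bU,\bV)$ with the contractivity of $\bU^\top,\bV^\top$, is simply written out more explicitly than the paper's one-line remark.
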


\begin{proof}[Proof of Lemma~\ref{lem:isometric-invariance}]
Since \(\bU^\top\bU=\bI_m\) and \(\bV^\top\bV=\bI_n\), the maps
\(\bU,\bV\) are isometric embeddings, whereas
\(\bU^\top,\bV^\top\) are surjective contractions. By the chain rule,
\[
\nabla f_{\bU,\bV}(\x;\y)
=
\bigl(
\bU\nabla_{\x}f(\bU^\top\x;\bV^\top\y),
\bV\nabla_{\y}f(\bU^\top\x;\bV^\top\y)
\bigr),
\]
and hence \(f_{\bU,\bV}\) is jointly \(\ell\)-smooth.

Since \(\bV^\top\) is surjective, the original and lifted inner problems have the same optimal value. Moreover, if
\(\bar\y^\star\) maximizes \(f(\bU^\top\x;\cdot)\), then
\(\bV\bar\y^\star\) maximizes \(f_{\bU,\bV}(\x;\cdot)\), since
\(\bV^\top\bV=\bI_n\). Hence, the lifted maximum is finite and attained, and
\begin{equation}
\label{eq:value-function}
\Phi_{\bU,\bV}(\x)
=
\max_{\y\in\R^{n'}}f_{\bU,\bV}(\x;\y)
=
\max_{\bar\y\in\R^n}f(\bU^\top\x;\bar\y)
=
\Phi(\bU^\top\x).
\end{equation}
Moreover,
\[
\frac12\|\nabla_{\y}f_{\bU,\bV}(\x;\y)\|^2
=
\frac12\|\nabla_{\y}f(\bU^\top\x;\bV^\top\y)\|^2
\geq
\mu\bigl(
\Phi_{\bU,\bV}(\x)-f_{\bU,\bV}(\x;\y)
\bigr),
\]
so \(f_{\bU,\bV}\) satisfies the \(\mu\)-dual P\L{}
inequality.

Since \(\bU^\top\) is surjective,
\[
\Phi_{\bU,\bV}(\bz_{m'})
-\inf_{\x\in\R^{m'}}\Phi_{\bU,\bV}(\x)
=
\Phi(\bz_m)-\inf_{\bar\x\in\R^m}\Phi(\bar\x)
\leq\Delta.
\]
Therefore \(f_{\bU,\bV}\in\cF(\ell,\mu,\Delta)\). Finally,
\eqref{eq:value-function} gives
\(\nabla\Phi_{\bU,\bV}(\x)
=\bU\nabla\Phi(\bU^\top\x)\), and hence
\(
\|\nabla\Phi_{\bU,\bV}(\x)\|
=
\|\nabla\Phi(\bU^\top\x)\|
\)
since \(\bU\) is an isometry. This proves the lemma.
\end{proof}

We next state the finite-horizon transcript simulation used in
the resisting-oracle reduction. The proof follows the construction of
\citet[Lemma~7]{carmon2019lower}, with separate orthogonal embeddings for
the primal and dual spaces.

\begin{lemma}[Finite-horizon resisting oracle]
\label{lem:finite-horizon-resisting-oracle}
For any \(T_0\in\N\) and
\(\sA\in\cA_{\det}\), there exists a
zero-respecting deterministic algorithm
\(\sZ\in\cA_{\zr}\cap\cA_{\det}\) such that, for every
smooth saddle function \(f:\R^m\times\R^n\to\R\), one can find
\(\bU\in\operatorname O(m+T_0,m)\) and \(\bV\in\operatorname O(n+T_0,n)\) satisfying
\[
 \bigl(\sZ_{\x}^{(t)}[f],\sZ_{\y}^{(t)}[f]\bigr)
 =
 \bigl(
     \bU^\top\sA_{\x}^{(t)}[f_{\bU,\bV}],
     \bV^\top\sA_{\y}^{(t)}[f_{\bU,\bV}]
 \bigr),
 \qquad t=0,\ldots,T_0-1.
\]
\end{lemma}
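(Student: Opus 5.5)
We adapt the inductive construction of \citet[Lemma~7]{carmon2019lower}, running it separately on the primal and dual coordinates. The idea is to build $\bU$ and $\bV$ column by column while simulating $\sA$, so that $\sZ$ can rebuild the whole transcript using only oracle calls to $f$.

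\textbf{Simulation invariant.} Write $\bU=[\mathbf u_1,\ldots,\mathbf u_m]$ and $\bV=[\mathbf v_1,\ldots,\mathbf v_n]$. At step $t$, let
\[
I_t:=\bigcup_{s<t}\supp\bigl(\nabla_{\x}f(\x^{(s)};\y^{(s)})\bigr)\subseteq[m],\qquad
J_t:=\bigcup_{s<t}\supp\bigl(\nabla_{\y}f(\x^{(s)};\y^{(s)})\bigr)\subseteq[n],
\]
where $(\x^{(s)},\y^{(s)})$ are the queries of $\sZ$. We maintain that the columns $\{\mathbf u_i\}_{i\in I_t}$ and $\{\mathbf v_j\}_{j\in J_t}$ are already fixed. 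These columns are all that is needed to compute the oracle answers of $f_{\bU,\bV}$ at the previous lifted queries. Indeed, the chain rule formula from the proof of Lemma~\ref{lem:isometric-invariance} gives
\[
\nabla_{\x}f_{\bU,\bV}=\bU\nabla_{\x}f=\sum_{i\in\supp}\mathbf u_i[\nabla_{\x}f]_i,
\qquad
\nabla_{\y}f_{\bU,\bV}=\sum_{j\in\supp}\mathbf v_j[\nabla_{\y}f]_j .
\]
The function value is unchanged. Hence $\sZ$ can feed these answers to $\Gamma_t$ and obtain the lifted query $(\tilde\x^{(t)},\tilde\y^{(t)})$ of $\sA$.

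\textbf{Defining $\sZ$'s query.} Set $\x^{(t)}:=\bU^\top\tilde\x^{(t)}$. The coordinates of $\x^{(t)}$ in $I_t$ are $\langle\mathbf u_i,\tilde\x^{(t)}\rangle$ and are computable. For the coordinates $i\notin I_t$, we must ensure that $\langle\mathbf u_i,\tilde\x^{(t)}\rangle=0$ once $\mathbf u_i$ is eventually chosen. We do this by requiring every future column to be orthogonal to all lifted queries $\tilde\x^{(0)},\ldots,\tilde\x^{(T_0-1)}$ and to all previously fixed columns. The dual side is handled the same way with $\tilde\y^{(s)}$ and $\mathbf v_j$.

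This choice makes $\sZ$ zero-respecting: $\supp(\x^{(t)})\subseteq I_t$ and $\supp(\y^{(t)})\subseteq J_t$. After issuing the query, $\sZ$ calls the oracle of $f$ at $(\x^{(t)},\y^{(t)})$. For each newly revealed index $i\in I_{t+1}\setminus I_t$, it fixes $\mathbf u_i$ as a unit vector orthogonal to $\operatorname{span}\{\tilde\x^{(s)}\}_{s\le t}$ and to the already chosen columns. Any canonical deterministic rule works, for example Gram--Schmidt applied to standard basis vectors. Each index $j\in J_{t+1}\setminus J_t$ is handled analogously. The remaining columns are filled in the same way at the end.

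Determinism of $\sZ$ holds because every choice depends only on oracle answers of $f$. The case $t=0$ is immediate, since both algorithms start at $\bz$. By construction, the queries of $\sA$ on $f_{\bU,\bV}$ coincide with the simulated $\tilde\x^{(t)},\tilde\y^{(t)}$. This is an induction on $t$: the oracle answers fed to $\Gamma_t$ are exactly those of $f_{\bU,\bV}$ at the earlier lifted queries.

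\textbf{Main obstacle: dimension counting.} We must check that orthogonal columns always exist. A new column must avoid at most $T_0$ query vectors plus at most $m-1$ previously fixed columns, a total of at most $m+T_0-1$ constraints. Since the ambient dimension is $m+T_0$, a unit vector satisfying all constraints always exists. The same count works for the dual side in dimension $n+T_0$.

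A subtle point is that when $\mathbf u_i$ is fixed at step $t$, the future queries $\tilde\x^{(s)}$ with $s>t$ are not yet known. Orthogonality to them is still needed for later coordinates of $\x^{(s)}$. For $i\in I_t$, however, the coordinate $\langle\mathbf u_i,\tilde\x^{(s)}\rangle$ is allowed to be nonzero, so only columns fixed after step $s$ need orthogonality to $\tilde\x^{(s)}$, and that is exactly what the construction enforces. This shows $\bU^\top\tilde\x^{(t)}=\x^{(t)}$ for all $t<T_0$, and likewise $\bV^\top\tilde\y^{(t)}=\y^{(t)}$, which is the identity claimed in the statement.
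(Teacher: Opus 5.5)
Your proposal is correct and follows essentially the paper's argument: the paper invokes the column-by-column construction of \citet[Lemma~7]{carmon2019lower} with separate embeddings $\bU$ and $\bV$ for the primal and dual spaces, and you carry this out in full, including the key points that a column fixed after query $s$ must be orthogonal to $\tilde\x^{(s)}$ and that each new column faces at most $m+T_0-1$ (resp.\ $n+T_0-1$) orthogonality constraints. The one detail to add is that $\sZ$ must be defined for all $t$ to belong to $\cA_{\zr}\cap\cA_{\det}$; letting it query $\bz$ for every $t\geq T_0$, for example, is trivially zero-respecting and deterministic.
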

Lemma~\ref{lem:finite-horizon-resisting-oracle}, together with the rotation invariance established in Lemma~\ref{lem:isometric-invariance}, yields the desired reduction. Indeed, a hard instance for zero-respecting algorithms can be rotated into an instance whose first $T_0$ oracle responses simulate those observed by any prescribed deterministic first-order method. The rotation preserves the parameters $(\ell,\mu,\Delta)$ and the norm of the value-function gradient. Consequently, any $T_0$-query lower bound for $\cA_{\zr}\cap\cA_{\det}$ transfers to the full class $\cA_{\det}$ with the same stationarity tolerance.

\section{Main Results and Hard-Instance Certificates} 
\label{sec:framework}
\subsection{Main Results}
We first state the lower bound for deterministic first-order zero-respecting algorithms.
\begin{theorem}[Zero-respecting lower bound]
\label{thm:zr-lower}
There exist numerical constants $c_0>1$ and $c_1,c_2>0$
such that, for every $\ell,\mu,\Delta,\epsilon>0$ satisfying
$\kappa:=\ell/\mu\ge c_0$ and
$\epsilon^2\le c_1\ell\Delta$, there exists
$f\in\cF(\ell,\mu,\Delta)$ such that
\begin{equation*}
    \cT_{\epsilon}(\cA_{\zr}\cap\cA_{\det},\{f\})\ge c_2\kappa\frac{\ell\Delta}{\epsilon^2}.
\end{equation*}
\end{theorem}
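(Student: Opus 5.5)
Our plan is to follow the chain-composition strategy outlined in the introduction: build a first-order saddle zero-chain whose value function is (a scaled copy of) the nonconvex zero-chain of \citet{carmon2019lower}, and in which each primal link is hidden behind a dual P\L{} chain of length $N=\Theta(\kappa)$.

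\textbf{Outer chain.} Take the standard Carmon et al.\ hard function $\bar F_M(\bar\x)=-\Psi(1)\Phi(x_1)+\sum_{i=2}^{M}[\Psi(-x_{i-1})\Phi(-x_i)-\Psi(x_{i-1})\Phi(x_i)]$ and scale it as $\lambda\ell\sigma^2\bar F_M(\x/\sigma)$. Then its gradient has norm $>\epsilon$ whenever $x_M=0$, its gap is $\lesssim \ell\sigma^2 M$, and its smoothness is $\lesssim\ell$. We choose $\sigma\asymp\epsilon/\ell$ and $M\asymp \ell\Delta/\epsilon^2$, which is at least $1$ by $\epsilon^2\le c_1\ell\Delta$.

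\textbf{Inner dual block.} For each link $i$ we construct a function $g_i(x_{i-1},x_i;\y^{(i)})$ with $\y^{(i)}\in\R^N$ with four properties:
\begin{enumerate}[(a)]
\item $\max_{\y}g_i$ equals exactly the $i$-th link term of the scaled outer chain.
\item $g_i$ is $O(\ell)$-smooth with a constant independent of $N$.
\item $g_i$ is $\Omega(\ell/N)$-P\L{} in $\y^{(i)}$.
\item The zero-chain property holds: $x_{i-1}$ feeds $y^{(i)}_1$, each $y^{(i)}_j$ feeds $y^{(i)}_{j+1}$, and only $y^{(i)}_N$ couples to $x_i$, so that $\nabla_{x_i}g_i=0$ while $y^{(i)}_N=0$.
\end{enumerate}
A natural template is a tridiagonal quadratic in $\y$ whose maximizer is $y_j\propto x_{i-1}$-dependent, with the target term $\Psi(\pm x_{i-1})\Phi(\pm x_i)$ entering through $y_N$ only. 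An example is $-\frac{\ell}{2}\sum_j (y_{j+1}-y_j)^2$-type terms anchored at $y_1$ by a function of $x_{i-1}$, plus a term $h(y_N,x_i)$ reproducing the link. The quadratic path Laplacian has smallest eigenvalue $\asymp 1/N^2$, which would give the worse $\kappa^{1/2}$ lower bound. To get P\L{} modulus $1/N$ rather than strong concavity $1/N^2$, we use a nonconcave (e.g.\ $(\cdot)^-$-based, as the notation $\z^-$ suggests) chain in the spirit of \citet{yue2023}, verified through the weighted norms $\norm{\cdot}_{\w}$, $\norm{\cdot}_{*,\w}$ with weights chosen so that $\frac12\|\nabla_\y g\|^2\ge \frac{c\ell}{N}(\max g-g)$. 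We then set $f=\sum_{i=1}^M g_i$ with separable dual blocks. The P\L{} constant of a sum over disjoint dual blocks is the minimum of the block constants, and $\Phi$ is exactly the scaled outer chain. We take $N=\lfloor c\kappa\rfloor$ so that $\mu\le c\ell/N$, using $\kappa\ge c_0$.

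\textbf{Counting.} Under the ordering $x_1,\y^{(2)},x_2,\y^{(3)},\dots$ the function $f$ is a saddle zero-chain. By Definition~\ref{def:zero-respecting}, each query extends the support by at most one coordinate. Reaching $x_M\neq 0$ therefore takes at least $(M-1)N\gtrsim\kappa\ell\Delta/\epsilon^2$ queries, and before that $\|\nabla\Phi(\x)\|>\epsilon$ by the outer-chain property. The gap satisfies $\Delta$ after adjusting constants, and smoothness is $\le\ell$ after rescaling.

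\textbf{Main obstacle.} The main obstacle is the inner block, which must do three things at once. It must be exactly maximized to the nonconvex link term, which is in general sign-indefinite and multiplicative in $(x_{i-1},x_i)$. It must satisfy a uniform P\L{} inequality with modulus $\Omega(1/N)$ for every fixed $\x$. And it must keep $N$-independent joint smoothness under the scale $\sigma$. We would first attempt a construction in which $y^{(i)}_N$ at the optimum equals $\sigma\Psi$-type quantities, so that the link term is linear in the dual output. We would then prove P\L{} by bounding the suboptimality through a telescoping sum along the chain and applying Cauchy--Schwarz with weights $w_j$, which costs a factor $N$ rather than $N^2$.
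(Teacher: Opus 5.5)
Your outer architecture and final counting match the paper's. Once you have an unscaled instance with $N$-independent joint smoothness, dual P\L{} modulus of order $1/N$, the saddle zero-chain property, the terminal obstruction at $x_M=0$, and an $O(M)$ initial gap, the theorem follows by exactly the rescaling you describe ($\lambda\asymp\epsilon/\ell$, $M\asymp\ell\Delta/\epsilon^2$, $N\asymp\kappa$) and by counting coordinates along the ordering.

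\textbf{The gap.} The inner block, which you yourself flag as the main obstacle, is never constructed. Properties (a)--(d) are postulated rather than proved, and they are the whole content of the theorem. Moreover, the templates you sketch cannot deliver (a). Suppose the chain part $-K$ is maximized at an anchored point $\y^\star$, so $\nabla K(\y^\star)=\bz$, and the link enters through a coupling $\Gamma(y_N,x_i)$. Then $\max_{\y}[-K+\Gamma]$ equals $\Gamma(y^\star_N,x_i)$ only if $\partial_{y_N}\Gamma(y^\star_N,x_i)=0$; otherwise $\y^\star$ is not stationary and the maximum is strictly larger.
\begin{itemize}
\item For $\Gamma=h(y_N,x_i)$ this fails, because $\nabla_sh(s,t)=-\psi'(-s)\varphi(-t)-\psi'(s)\varphi(t)<0$ whenever $|s|>\frac12$, which is exactly the regime where the link is active.
\item For a coupling linear in $y_N$ it fails because the coefficient $c(x_i)$ is nonzero. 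This version also breaks (d): that coefficient, built from $\varphi(\pm x_i)$ with $\varphi(0)>0$, appears verbatim in $\nabla_{y_N}$ at $\y^{(i)}=\bz$. So $y^{(i)}_N$ is exposed without traversing $y^{(i)}_1,\ldots,y^{(i)}_{N-1}$.
\end{itemize}

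\textbf{What the paper does instead.} Its construction is designed around exactly these two constraints, using ingredients your plan does not contain.
\begin{itemize}
\item \emph{Saturating gate.} The coupling is $G(\alpha;\y)=\alpha q(y_N)-H(\y)$. Since $q(0)=q'(0)=0$, the coupling neither leaks $y_N$ nor produces a primal gradient while $y_N=0$. Since $q\equiv1$ on $[1,\infty)$, the coupling is flat at the optimum, so $\max_{\y}G=\alpha$ is attained at $\y=\mathbf{1}$ for every $\alpha\in[0,10]$. A strengthened P\L{} estimate controlling both $H$ and the terminal deficit $1-q(y_N)$, together with the sign condition $\nabla_{y_j}H\le0$, then gives a modulus uniform in $\alpha$, hence in $\x$.
\item \emph{Lifting.} The gate works only for nonnegative targets, so the sign-indefinite link is lifted to $\widetilde h=5\rho(s)^2+h\in[0,10\rho(s)^2]$, and $-5\rho(s)^2$ is subtracted outside the dual maximization.
\item \emph{Perspective scaling.} The dependence on $x_{i-1}$ is not an additive anchor but the perspective $B(\eta,\alpha;\y)=\eta^2G(\alpha/\eta^2;\bW^{-1}\y/\eta)$ with $\eta=\rho(x_{i-1})$. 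This preserves the P\L{} modulus exactly and reduces the block to $-\frac12\norm{\y^-}^2$, which is gradient-free at the origin, whenever $|x_{i-1}|\le\frac12$. It needs a separate argument, Lemma~\ref{lem:ncpl-joint-smoothness}, that joint smoothness stays $N$-independent as $\eta\to0$; here $\rho=\theta(s)+\theta(-s)$ is smooth because $\psi=\theta^2$.
\item \emph{Weights.} The $1/N$ modulus comes from the recursion $\omega_{j-1}=\sqrt{\omega_j/N}$, which gives $\omega_{j-1}^2/\omega_j=1/N$, $\omega_j\ge1/N$, and $\sum_j\omega_{j-1}<3$. These are combined with a first-deficient-index argument and the bound $1-q(t)\le2\bigl((p'(t))^2+(t^-)^2\bigr)$. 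The monotonicity $\omega_{j-1}\le\omega_j$ is what keeps Euclidean smoothness bounded after the change of variables by $\bW$.
\end{itemize}
Without these ingredients or genuine substitutes, your counting argument has no valid instance to act on.
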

Our main result extends this lower bound to arbitrary deterministic first-order methods.
\begin{theorem}
\label{thm:deterministic}
There exist numerical constants $c_0>1$ and $c_1,c_2>0$
such that, for every $\ell,\mu,\Delta,\epsilon>0$ satisfying
$\kappa:=\ell/\mu\ge c_0$ and
$\epsilon^2\le c_1\ell\Delta$,
\begin{equation*}
    \cT_{\epsilon}(\cA_{\det},\cF(\ell,\mu,\Delta))\ge c_2\kappa\frac{\ell\Delta}{\epsilon^2}.
\end{equation*}
\end{theorem}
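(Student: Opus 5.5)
The plan is to derive Theorem~\ref{thm:deterministic} from Theorem~\ref{thm:zr-lower} by the finite-horizon resisting-oracle reduction, i.e., by combining Lemma~\ref{lem:finite-horizon-resisting-oracle} with Lemma~\ref{lem:isometric-invariance}. We take the same constants $c_0,c_1,c_2$ as in Theorem~\ref{thm:zr-lower}. Fix $\ell,\mu,\Delta,\epsilon$ in the admissible regime, and let $f\in\cF(\ell,\mu,\Delta)$ on $\R^m\times\R^n$ be the hard instance provided there. Set
\[
T_0:=\lceil c_2\kappa\ell\Delta/\epsilon^2\rceil.
\]
Since the infimum over $t\in\N_0$ in Definition~\ref{def:complexity-algorithm-class} takes integer values, Theorem~\ref{thm:zr-lower} says that every $\sZ\in\cA_{\zr}\cap\cA_{\det}$ satisfies $\|\nabla\Phi(\sZ_{\x}^{(t)}[f])\|>\epsilon$ for all $t\le T_0-1$. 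Indeed, the infimum of the stopping time for this single instance is at least $c_2\kappa\ell\Delta/\epsilon^2$, hence at least $T_0$.

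Now fix an arbitrary $\sA\in\cA_{\det}$. Lemma~\ref{lem:finite-horizon-resisting-oracle} with this $T_0$ gives a zero-respecting deterministic $\sZ$. Applied to $f$, it also gives $\bU\in\operatorname O(m+T_0,m)$ and $\bV\in\operatorname O(n+T_0,n)$ such that $\sZ_{\x}^{(t)}[f]=\bU^\top\sA_{\x}^{(t)}[f_{\bU,\bV}]$ for $t<T_0$. By Lemma~\ref{lem:isometric-invariance}, the rotated function $f_{\bU,\bV}$ lies in $\cF(\ell,\mu,\Delta)$ and satisfies
\[
\|\nabla\Phi_{\bU,\bV}(\sA_{\x}^{(t)}[f_{\bU,\bV}])\|=\|\nabla\Phi(\bU^\top\sA_{\x}^{(t)}[f_{\bU,\bV}])\|=\|\nabla\Phi(\sZ_{\x}^{(t)}[f])\|>\epsilon
\]
for every $t\le T_0-1$. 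Thus $\sA$ needs at least $T_0$ queries on $f_{\bU,\bV}$. Taking the supremum over $\cF(\ell,\mu,\Delta)$ and then the infimum over $\sA$ gives $\cT_\epsilon(\cA_{\det},\cF(\ell,\mu,\Delta))\ge T_0\ge c_2\kappa\ell\Delta/\epsilon^2$.

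The main point to check is the order of quantifiers. The instance $f$ must be fixed before $\sA$ is chosen, and it must be hard for every zero-respecting deterministic method at once. Theorem~\ref{thm:zr-lower} provides exactly this, since its single $f$ does not depend on the algorithm. The rotation, by contrast, may depend on $\sA$, which is allowed because the supremum ranges over the whole class. A second small check is that Lemma~\ref{lem:finite-horizon-resisting-oracle} produces indices $t=0,\dots,T_0-1$, which matches the range needed once $T_0$ is defined via the ceiling. Finally, the rotated instance must be admissible, which holds because $\Phi_{\bU,\bV}=\Phi\circ\bU^\top$ is differentiable.
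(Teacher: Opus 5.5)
Your proposal is correct and follows the paper's proof step for step. It uses the same constants, the same horizon $T_0=\lceil c_2\kappa\ell\Delta/\epsilon^2\rceil$, the single algorithm-independent instance from Theorem~\ref{thm:zr-lower}, and the combination of Lemma~\ref{lem:finite-horizon-resisting-oracle} with Lemma~\ref{lem:isometric-invariance}; your explicit checks on the integrality of the stopping time and the quantifier order (instance fixed before $\sA$, rotation chosen after) spell out what the paper leaves implicit.
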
 
\begin{remark}[Arbitrary deterministic output]
\label{rem:arbitrary-output}
Theorem~\ref{thm:deterministic} also covers the case of a deterministic
point computed from the final transcript but not previously queried, at
the cost of one additional oracle call. Indeed, augment the algorithm by querying
that primal point together with the dual origin. The lower bound for query
points then applies to the augmented deterministic algorithm, and the
single additional query is absorbed into the numerical constant.
\end{remark}

\subsection{Hard-Instance Certificates}
We next isolate four properties of an unscaled hard instance that suffice to prove Theorem~\ref{thm:zr-lower}. 
Call $(M,N)$ admissible if $M\geq1$ and $N\geq2$. For each
admissible pair, consider
$
\bar f:\R^M\times\R^{MN}\to\R
$
with coordinate ordering $\ord$, and denote its value function by $\bar\Phi$.
We impose four conditions on $(\bar f,\ord)$.

\begin{condition}[Unscaled hard instance properties]
\label{cond:normalized-hard-instance}
There exist numerical constants
$\ell_0,\mu_0,g_0,\Delta_0>0$, independent of $M$ and $N$, such that,
for every admissible pair $(M,N)$, the corresponding unscaled hard instance
$\bar f$ and coordinate ordering $\ord$ satisfy the following properties:
\begin{enumerate}[
    label=\textup{(C\arabic*)},
    ref=\textup{(C\arabic*)},
    leftmargin=*,
    itemsep=0.6em
]
\item
The function \(\bar f\) is jointly \(\ell_0\)-smooth and $\frac{\mu_0}{N}$-dual P\L{}.

\item
The function \(\bar f\) is a first-order saddle zero-chain with
respect to \(\ord\).

\item
The $\pi_{M(N+1)}$-th coordinate of $(\x,\y)$ is \(x_M\). Moreover,
\[
    x_M=0
    \quad\Longrightarrow\quad
    \bigl\|\nabla\bar\Phi(\x)\bigr\|
    \ge g_0.
\]

\item
The initial value-function gap satisfies
\[
    \bar\Phi(\bz)
    -
    \inf_{\x\in\R^M}\bar\Phi(\x)
    \le \Delta_0 M.
\]
\end{enumerate}
\end{condition}
The four conditions play complementary roles.
Conditions~\ref{cond:normalized-hard-instance}~\textup{(C2)}
and~\textup{(C3)} provide the information-theoretic obstruction: the
former limits coordinate discovery, while the latter rules out small
value-function gradients before the terminal coordinate is activated.
Conditions~\textup{(C1)} and~\textup{(C4)} provide the regularity and
initial-gap bounds needed to scale the unscaled instance into the target
function class.

\section{The Construction of Our Hard Instance}
\label{sec:construction}

We now construct an unscaled hard instance satisfying the four
certificates in Condition~\ref{cond:normalized-hard-instance}. Fix
$M\geq1$ and $N\geq2$. The construction separates the geometry of the
value function from the coordinate-discovery behavior of the saddle
oracle. Following the chain-composition strategy of \citet{li2021}, we
couple an $M$-stage primal zero-chain, based on the construction of
\citet{carmon2019lower}, with $M$ dual P\L{} chains of length $N$. Dual
maximization preserves the primal zero-chain as the value function, while
each dual chain delays the revelation of the next primal coordinate. A
perspective transformation adapts the dual chains to the scale of the
corresponding primal interactions without changing their P\L{} modulus.

Figure~\ref{fig:hard-instance-chain} illustrates the resulting
coordinate-discovery chain and the associated value function chain.

We let $\ord$ be the coordinate ordering induced by
$$
\bigl(
y^{(1)}_1,\ldots,y^{(1)}_N,x_1,\ldots,
y^{(M)}_1,\ldots,y^{(M)}_N,x_M
\bigr).
$$
The saddle zero-chain property forces these coordinates to be revealed
sequentially, giving a total chain length of $M(N+1)$.

We construct the dual component in two stages. First, we build
a weighted dual P\L{} chain that enforces sequential coordinate discovery.
Second, we introduce a terminal-gated primal-dual coupling and apply a
perspective transformation to match the scale of the outer construction.
\subsection{A Weighted Dual P\L{} Chain}
\label{sec:inner-dual-chain}
For $\y=(y_1,\ldots,y_N)\in\R^N$, we adopt the convention $y_0:=1$. We first define the scalar transition functions
$$
q(t):=
\begin{cases}
0, & t\leq 0,\\
3t^2-2t^3, & 0\leq t\leq 1,\\
1, & t\geq 1,
\end{cases}
\qquad\text{and}\qquad
p(t):=q\left(\frac{t+1}{2}\right).
$$
Direct differentiation gives
\[
q'(t)
=
\begin{cases}
6t(1-t), & 0<t<1,\\
0, & \text{otherwise},
\end{cases}
\qquad\mbox{and}\qquad
p'(t)
=
\begin{cases}
\dfrac34(1-t^2), & -1<t<1,\\
0, & \text{otherwise}.
\end{cases}
\]
\begin{figure}[H]
\centering
\begin{tikzpicture}[line cap=round,line join=round]

\begin{scope}[shift={(0,0)}]
\begin{scope}[xscale=1.65,yscale=2]

\draw[->,thin] (-1.6,0)--(1.65,0) node[right] {$t$};
\draw[->,thin] (0,-0.12)--(0,1.25)
    node[above] {$q(t),\,p(t)$};

\draw[very thick,blue!70!black]
    (-1.55,0)--(0,0);

\draw[
    very thick,
    blue!70!black,
    domain=0:1,
    samples=80,
    variable=\xx
]
    plot ({\xx},{3*(\xx)*(\xx)-2*(\xx)*(\xx)*(\xx)});

\draw[very thick,blue!70!black]
    (1,1)--(1.55,1);

\draw[very thick,dashed,red!75!black]
    (-1.55,0)--(-1,0);

\draw[
    very thick,
    dashed,
    red!75!black,
    domain=-1:1,
    samples=100,
    variable=\xx
]
    plot
    ({\xx},
     {3*((\xx+1)/2)*((\xx+1)/2)
      -2*((\xx+1)/2)*((\xx+1)/2)*((\xx+1)/2)});

\draw[very thick,dashed,red!75!black]
    (1,1)--(1.55,1);

\fill[red!75!black] (-1,0) circle (1.2pt);
\node[below left=2pt,font=\scriptsize]
    at (-1,0) {$(-1,0)$};

\fill[blue!70!black] (0,0) circle (1.2pt);
\node[below right=2pt,font=\scriptsize]
    at (0,0) {$(0,0)$};

\fill[red!75!black] (0,0.5) circle (1.2pt);
\node[right=3pt,font=\scriptsize]
    at (0,0.5) {$\left(0,\frac12\right)$};

\fill[black] (1,1) circle (1.2pt);
\node[above right=2pt,font=\scriptsize]
    at (1,1) {$(1,1)$};

\draw[very thick,blue!70!black]
    (-1.45,1.13)--(-1.15,1.13);
\node[right=2pt,font=\small]
    at (-1.15,1.13) {$q$};

\draw[very thick,dashed,red!75!black]
    (-0.65,1.13)--(-0.35,1.13);
\node[right=2pt,font=\small]
    at (-0.35,1.13) {$p$};

\end{scope}

\node[font=\small] at (0,-0.9)
    {(a) The functions \(q\) and \(p\)};
\end{scope}

\begin{scope}[shift={(6.8,0)}]
\begin{scope}[xscale=1.65,yscale=1.4]

\draw[->,thin] (-1.6,0)--(1.65,0) node[right] {$t$};
\draw[->,thin] (0,-0.17)--(0,1.75)
    node[above] {$q'(t),\,p'(t)$};

\draw[very thick,blue!70!black]
    (-1.55,0)--(0,0);

\draw[
    very thick,
    blue!70!black,
    domain=0:1,
    samples=100,
    variable=\xx
]
    plot ({\xx},{6*(\xx)*(1-\xx)});

\draw[very thick,blue!70!black]
    (1,0)--(1.55,0);

\draw[very thick,dashed,red!75!black]
    (-1.55,0)--(-1,0);

\draw[
    very thick,
    dashed,
    red!75!black,
    domain=-1:1,
    samples=120,
    variable=\xx
]
    plot ({\xx},{0.75*(1-(\xx)*(\xx))});

\draw[very thick,dashed,red!75!black]
    (1,0)--(1.55,0);

\fill[red!75!black] (-1,0) circle (1.2pt);
\node[below left=2pt,font=\scriptsize]
    at (-1,0) {$(-1,0)$};

\fill[blue!70!black] (0,0) circle (1.2pt);
\node[below right=2pt,font=\scriptsize]
    at (0,0) {$(0,0)$};

\fill[red!75!black] (0,0.75) circle (1.2pt);
\node[left=3pt,font=\scriptsize]
    at (0,0.75) {$\left(0,\frac34\right)$};

\fill[blue!70!black] (0.5,1.5) circle (1.2pt);
\node[above right=2pt,font=\scriptsize]
    at (0.5,1.5) {$\left(\frac12,\frac32\right)$};

\fill[black] (1,0) circle (1.2pt);
\node[below right=2pt,font=\scriptsize]
    at (1,0) {$(1,0)$};

\draw[very thick,blue!70!black]
    (-1.45,1.62)--(-1.15,1.62);
\node[right=2pt,font=\small]
    at (-1.15,1.62) {$q'$};

\draw[very thick,dashed,red!75!black]
    (-0.65,1.62)--(-0.35,1.62);
\node[right=2pt,font=\small]
    at (-0.35,1.62) {$p'$};

\end{scope}

\node[font=\small] at (0,-0.9)
    {(b) The derivatives \(q'\) and \(p'\)};
\end{scope}

\end{tikzpicture}

\caption{The scalar functions \(q,p\) and their derivatives.}
\label{fig:ncpl-inner-gates}
\end{figure}

Let $\{\omega_j\}_{j=0}^N$ be positive weights, to be
specified below. For each $j\in[N]$, define the local dual interaction
\[
    d_j(u,v)
    :=
    \omega_{j-1}\bigl(1-q(u)p(v)\bigr)
    +\frac{\omega_j}{2}(v^-)^2.
\]
The dual chain is then defined by
\begin{equation*}
H(\y):=\sum_{j=1}^N d_j(y_{j-1},y_j).
\end{equation*}
The fixed coordinate $y_0=1$ initiates the chain. At a point
with $y_j=\cdots=y_N=0$, the identities $q(0)=q'(0)=0$ imply that
no gradient is generated in $y_{j+1},\ldots,y_N$. Once $y_{j-1}$ is
positive, however, $q(y_{j-1})>0$ and $p'(0)=3/4$, so the gradient can
reveal $y_j$. The quadratic term in $(y_j^-)^2$ supplies a gradient in
the negative region, where the derivatives of the gates may vanish.
\begin{figure}[h]
\centering
\pgfplotsset{
    colormap={Hmap}{
        rgb255=(217,5,3)
        rgb255=(255,97,0)
        rgb255=(255,209,5)
        rgb255=(64,173,51)
        rgb255=(0,122,120)
        rgb255=(0,61,122)
    }
}

\begin{tikzpicture}[line cap=round,line join=round]
\begin{axis}[
    width=0.82\textwidth,
    height=0.66\textwidth,
    view={10}{24},
    xmin=-1.5,
    xmax=1.25,
    ymin=-1.5,
    ymax=1.25,
    zmin=0,
    zmax=4,
    x dir=reverse,
    y dir=reverse,
    xtick={-1.5,-1,-0.5,0,0.5,1,1.25},
    ytick={-1.5,-1,-0.5,0,0.5,1.25},
    ztick={0,0.5,...,4},
    xlabel={$y_1$},
    ylabel={$y_2$},
    zlabel={$H(y_1,y_2)$},
    axis lines=box,
    zlabel style={
        rotate=-90,
        anchor=east,
        xshift=-4pt,
        font=\normalsize
    },
    axis line style={
        black!65,
        line width=0.35pt
    },
    tick style={black},
    x tick label style={font=\scriptsize},
    y tick label style={font=\scriptsize,xshift=4pt},
    z tick label style={font=\scriptsize},
    label style={font=\normalsize},
    grid=major,
    grid style={
        black!15,
        line width=0.25pt
    },
    enlargelimits=false,
    scale mode=stretch to fill,
    z post scale=0.92,
    colormap name=Hmap,
    point meta min=0,
    point meta max=4
]

\path[fill=white,draw=none]
    (axis cs:-1.5,-1.5,0)
    -- (axis cs:1.25,-1.5,0)
    -- (axis cs:1.25,1.25,0)
    -- (axis cs:-1.5,1.25,0)
    -- cycle;

\addplot3[
    draw=red!78!orange,
    line width=0.55pt
] table[
    row sep=crcr,
    x=yone,
    y=ytwo,
    z=value
] {
    yone ytwo value\\
        1.2500000 0.0000012 0.0030000\\
        1.2028571 0.0000012 0.0030000\\
        1.1557143 0.0000012 0.0030000\\
        1.1085714 0.0000012 0.0030000\\
        1.0614286 0.0000012 0.0030000\\
        1.0142857 0.0000012 0.0030000\\
        0.9710714 0.0022330 0.0030000\\
        0.9317857 0.0122627 0.0030000\\
        0.9003571 0.0260561 0.0030000\\
        0.8706801 0.0439286 0.0030000\\
        0.8447639 0.0635714 0.0030000\\
        0.8189776 0.0871429 0.0030000\\
        0.7969631 0.1107143 0.0030000\\
        0.7746429 0.1381807 0.0030000\\
        0.7550000 0.1656667 0.0030000\\
        0.7374197 0.1932143 0.0030000\\
        0.7196429 0.2242332 0.0030000\\
        0.7039286 0.2546784 0.0030000\\
        0.6885998 0.2875000 0.0030000\\
        0.6752672 0.3189286 0.0030000\\
        0.6616300 0.3542857 0.0030000\\
        0.6489286 0.3906403 0.0030000\\
        0.6371429 0.4278985 0.0030000\\
        0.6267095 0.4642857 0.0030000\\
        0.6174899 0.4996429 0.0030000\\
        0.6082255 0.5389286 0.0030000\\
        0.5999054 0.5782143 0.0030000\\
        0.5924638 0.6175000 0.0030000\\
        0.5858454 0.6567857 0.0030000\\
        0.5794682 0.7000000 0.0030000\\
        0.5742857 0.7406841 0.0030000\\
        0.5697623 0.7825000 0.0030000\\
        0.5659208 0.8257143 0.0030000\\
        0.5626821 0.8728571 0.0030000\\
        0.5605863 0.9160714 0.0030000\\
        0.5592603 0.9632143 0.0030000\\
        0.5589400 1.0103571 0.0030000\\
        0.5589400 1.0575000 0.0030000\\
        0.5589400 1.1046429 0.0030000\\
        0.5589400 1.1517857 0.0030000\\
        0.5589400 1.1989286 0.0030000\\
        0.5589400 1.2500000 0.0030000\\
};

\addplot3[
    draw=orange!90!red,
    line width=0.55pt
] table[
    row sep=crcr,
    x=yone,
    y=ytwo,
    z=value
] {
    yone ytwo value\\
        1.2500000 -0.5293165 0.0030000\\
        1.1832143 -0.5293165 0.0030000\\
        1.1164286 -0.5293165 0.0030000\\
        1.0496429 -0.5293165 0.0030000\\
        0.9828571 -0.5290571 0.0030000\\
        0.9200000 -0.5237910 0.0030000\\
        0.8650000 -0.5137446 0.0030000\\
        0.8100000 -0.4985521 0.0030000\\
        0.7628571 -0.4811979 0.0030000\\
        0.7157143 -0.4594597 0.0030000\\
        0.6727763 -0.4353571 0.0030000\\
        0.6332143 -0.4089422 0.0030000\\
        0.5967391 -0.3803571 0.0030000\\
        0.5623682 -0.3489286 0.0030000\\
        0.5292994 -0.3135714 0.0030000\\
        0.4996429 -0.2764038 0.0030000\\
        0.4721429 -0.2358331 0.0030000\\
        0.4485714 -0.1946684 0.0030000\\
        0.4265444 -0.1485714 0.0030000\\
        0.4081055 -0.1014286 0.0030000\\
        0.3921532 -0.0503571 0.0030000\\
        0.3790331 0.0046429 0.0030000\\
        0.3681660 0.0596429 0.0030000\\
        0.3582143 0.1151586 0.0030000\\
        0.3487389 0.1735714 0.0030000\\
        0.3401225 0.2325000 0.0030000\\
        0.3323690 0.2914286 0.0030000\\
        0.3254082 0.3503571 0.0030000\\
        0.3189286 0.4118402 0.0030000\\
        0.3132914 0.4721429 0.0030000\\
        0.3081322 0.5350000 0.0030000\\
        0.3036642 0.5978571 0.0030000\\
        0.2998573 0.6607143 0.0030000\\
        0.2966914 0.7235714 0.0030000\\
        0.2941573 0.7864286 0.0030000\\
        0.2921546 0.8532143 0.0030000\\
        0.2909269 0.9160714 0.0030000\\
        0.2903418 0.9828571 0.0030000\\
        0.2903158 1.0496429 0.0030000\\
        0.2903158 1.1164286 0.0030000\\
        0.2903158 1.1832143 0.0030000\\
        0.2903158 1.2500000 0.0030000\\
};

\addplot3[
    draw=orange!90!yellow,
    line width=0.55pt
] table[
    row sep=crcr,
    x=yone,
    y=ytwo,
    z=value
] {
    yone ytwo value\\
        1.2500000 -1.0000001 0.0030000\\
        1.1596429 -1.0000001 0.0030000\\
        1.0692857 -1.0000001 0.0030000\\
        0.9789286 -0.9997665 0.0030000\\
        0.8925000 -0.9940994 0.0030000\\
        0.8139286 -0.9828305 0.0030000\\
        0.7392857 -0.9672111 0.0030000\\
        0.6646429 -0.9471464 0.0030000\\
        0.5978571 -0.9255751 0.0030000\\
        0.5310714 -0.9005868 0.0030000\\
        0.4682143 -0.8738231 0.0030000\\
        0.4058545 -0.8439286 0.0030000\\
        0.3472597 -0.8125000 0.0030000\\
        0.2914286 -0.7792007 0.0030000\\
        0.2349233 -0.7417857 0.0030000\\
        0.1814286 -0.7025617 0.0030000\\
        0.1324930 -0.6632143 0.0030000\\
        0.0832143 -0.6202086 0.0030000\\
        0.0366720 -0.5767857 0.0030000\\
        -0.0076691 -0.5335714 0.0030000\\
        -0.0506823 -0.4864286 0.0030000\\
        -0.0928015 -0.4314286 0.0030000\\
        -0.1278345 -0.3764286 0.0030000\\
        -0.1586885 -0.3175000 0.0030000\\
        -0.1848848 -0.2546429 0.0030000\\
        -0.2068545 -0.1839286 0.0030000\\
        -0.2220448 -0.1092857 0.0030000\\
        -0.2297425 -0.0267857 0.0030000\\
        -0.2302329 0.0675000 0.0030000\\
        -0.2302329 0.1578571 0.0030000\\
        -0.2302329 0.2482143 0.0030000\\
        -0.2302329 0.3385714 0.0030000\\
        -0.2302329 0.4289286 0.0030000\\
        -0.2302329 0.5192857 0.0030000\\
        -0.2302329 0.6096429 0.0030000\\
        -0.2302329 0.7039286 0.0030000\\
        -0.2302329 0.7942857 0.0030000\\
        -0.2302329 0.8846429 0.0030000\\
        -0.2302329 0.9750000 0.0030000\\
        -0.2302329 1.0653571 0.0030000\\
        -0.2302329 1.1557143 0.0030000\\
        -0.2302329 1.2500000 0.0030000\\
};

\addplot3[
    draw=yellow!75!green,
    line width=0.55pt
] table[
    row sep=crcr,
    x=yone,
    y=ytwo,
    z=value
] {
    yone ytwo value\\
        1.2500000 -1.4142128 0.0030000\\
        1.1360714 -1.4142128 0.0030000\\
        1.0221429 -1.4142128 0.0030000\\
        0.9121429 -1.4113996 0.0030000\\
        0.8100000 -1.4014757 0.0030000\\
        0.7078571 -1.3850226 0.0030000\\
        0.6135714 -1.3645558 0.0030000\\
        0.5232143 -1.3405976 0.0030000\\
        0.4367857 -1.3140430 0.0030000\\
        0.3503571 -1.2842495 0.0030000\\
        0.2663883 -1.2525000 0.0030000\\
        0.1853571 -1.2195241 0.0030000\\
        0.1067857 -1.1856331 0.0030000\\
        0.0282143 -1.1501349 0.0030000\\
        -0.0525173 -1.1110714 0.0030000\\
        -0.1250000 -1.0702208 0.0030000\\
        -0.1955441 -1.0246429 0.0030000\\
        -0.2601502 -0.9775000 0.0030000\\
        -0.3227575 -0.9264286 0.0030000\\
        -0.3842857 -0.8704495 0.0030000\\
        -0.4414755 -0.8125000 0.0030000\\
        -0.4942857 -0.7529940 0.0030000\\
        -0.5453571 -0.6887537 0.0030000\\
        -0.5938523 -0.6200000 0.0030000\\
        -0.6376051 -0.5492857 0.0030000\\
        -0.6774739 -0.4746429 0.0030000\\
        -0.7127697 -0.3960714 0.0030000\\
        -0.7427266 -0.3135714 0.0030000\\
        -0.7682789 -0.2192857 0.0030000\\
        -0.7850000 -0.1227718 0.0030000\\
        -0.7925072 -0.0150000 0.0030000\\
        -0.7926209 0.0989286 0.0030000\\
        -0.7926209 0.2167857 0.0030000\\
        -0.7926209 0.3307143 0.0030000\\
        -0.7926209 0.4446429 0.0030000\\
        -0.7926209 0.5585714 0.0030000\\
        -0.7926209 0.6725000 0.0030000\\
        -0.7926209 0.7903571 0.0030000\\
        -0.7926209 0.9042857 0.0030000\\
        -0.7926209 1.0182143 0.0030000\\
        -0.7926209 1.1321429 0.0030000\\
        -0.7926209 1.2500000 0.0030000\\
};

\addplot3[
    draw=green!65!black,
    line width=0.55pt
] table[
    row sep=crcr,
    x=yone,
    y=ytwo,
    z=value
] {
    yone ytwo value\\
        -0.0390228 -1.5000000 0.0030000\\
        -0.1071006 -1.4725000 0.0030000\\
        -0.1760714 -1.4412042 0.0030000\\
        -0.2389286 -1.4096831 0.0030000\\
        -0.3017857 -1.3752967 0.0030000\\
        -0.3631723 -1.3389286 0.0030000\\
        -0.4196429 -1.3030042 0.0030000\\
        -0.4785714 -1.2629369 0.0030000\\
        -0.5362792 -1.2210714 0.0030000\\
        -0.5885714 -1.1807989 0.0030000\\
        -0.6435714 -1.1359301 0.0030000\\
        -0.6946429 -1.0918283 0.0030000\\
        -0.7457143 -1.0452101 0.0030000\\
        -0.7955026 -0.9971429 0.0030000\\
        -0.8417018 -0.9500000 0.0030000\\
        -0.8889888 -0.8989286 0.0030000\\
        -0.9342857 -0.8469793 0.0030000\\
        -0.9775000 -0.7942352 0.0030000\\
        -1.0204667 -0.7378571 0.0030000\\
        -1.0600000 -0.6798409 0.0030000\\
        -1.0953571 -0.6212695 0.0030000\\
        -1.1293249 -0.5571429 0.0030000\\
        -1.1582176 -0.4942857 0.0030000\\
        -1.1859060 -0.4235714 0.0030000\\
        -1.2099723 -0.3489286 0.0030000\\
        -1.2290457 -0.2742857 0.0030000\\
        -1.2446429 -0.1914402 0.0030000\\
        -1.2551874 -0.1014286 0.0030000\\
        -1.2592302 -0.0110714 0.0030000\\
        -1.2592789 0.0871429 0.0030000\\
        -1.2592789 0.1853571 0.0030000\\
        -1.2592789 0.2796429 0.0030000\\
        -1.2592789 0.3778571 0.0030000\\
        -1.2592789 0.4721429 0.0030000\\
        -1.2592789 0.5703571 0.0030000\\
        -1.2592789 0.6685714 0.0030000\\
        -1.2592789 0.7628571 0.0030000\\
        -1.2592789 0.8610714 0.0030000\\
        -1.2592789 0.9592857 0.0030000\\
        -1.2592789 1.0535714 0.0030000\\
        -1.2592789 1.1517857 0.0030000\\
        -1.2592789 1.2500000 0.0030000\\
};

\addplot3[
    draw=teal!80!green,
    line width=0.55pt
] table[
    row sep=crcr,
    x=yone,
    y=ytwo,
    z=value
] {
    yone ytwo value\\
        -0.6646362 -1.5000000 0.0030000\\
        -0.6889553 -1.4842857 0.0030000\\
        -0.7142857 -1.4676232 0.0030000\\
        -0.7417857 -1.4491969 0.0030000\\
        -0.7653571 -1.4331239 0.0030000\\
        -0.7928571 -1.4140477 0.0030000\\
        -0.8164286 -1.3974208 0.0030000\\
        -0.8432201 -1.3782143 0.0030000\\
        -0.8675000 -1.3605238 0.0030000\\
        -0.8913918 -1.3428571 0.0030000\\
        -0.9175523 -1.3232143 0.0030000\\
        -0.9382143 -1.3074808 0.0030000\\
        -0.9636369 -1.2878571 0.0030000\\
        -0.9887131 -1.2682143 0.0030000\\
        -1.0128571 -1.2489618 0.0030000\\
        -1.0364286 -1.2294716 0.0030000\\
        -1.0599123 -1.2092857 0.0030000\\
        -1.0819115 -1.1896429 0.0030000\\
        -1.1032143 -1.1699164 0.0030000\\
        -1.1267857 -1.1472305 0.0030000\\
        -1.1472305 -1.1267857 0.0030000\\
        -1.1661992 -1.1071429 0.0030000\\
        -1.1881312 -1.0835714 0.0030000\\
        -1.2092089 -1.0600000 0.0030000\\
        -1.2289286 -1.0370723 0.0030000\\
        -1.2485714 -1.0133381 0.0030000\\
        -1.2677139 -0.9892857 0.0030000\\
        -1.2857599 -0.9657143 0.0030000\\
        -1.3035714 -0.9415338 0.0030000\\
        -1.3225781 -0.9146429 0.0030000\\
        -1.3389286 -0.8905363 0.0030000\\
        -1.3546429 -0.8664445 0.0030000\\
        -1.3711980 -0.8400000 0.0030000\\
        -1.3876694 -0.8125000 0.0030000\\
        -1.4034093 -0.7850000 0.0030000\\
        -1.4184419 -0.7575000 0.0030000\\
        -1.4332143 -0.7291644 0.0030000\\
        -1.4483724 -0.6985714 0.0030000\\
        -1.4613171 -0.6710714 0.0030000\\
        -1.4753442 -0.6396429 0.0030000\\
        -1.4882143 -0.6090993 0.0030000\\
        -1.5000000 -0.5794679 0.0030000\\
};

\addplot3[
    draw=cyan!55!blue,
    line width=0.55pt
] table[
    row sep=crcr,
    x=yone,
    y=ytwo,
    z=value
] {
    yone ytwo value\\
        -1.1557607 -1.5000000 0.0030000\\
        -1.1658883 -1.4921429 0.0030000\\
        -1.1739286 -1.4858245 0.0030000\\
        -1.1817857 -1.4795832 0.0030000\\
        -1.1935714 -1.4700918 0.0030000\\
        -1.2014286 -1.4636779 0.0030000\\
        -1.2097769 -1.4567857 0.0030000\\
        -1.2191754 -1.4489286 0.0030000\\
        -1.2289286 -1.4406664 0.0030000\\
        -1.2376109 -1.4332143 0.0030000\\
        -1.2466514 -1.4253571 0.0030000\\
        -1.2555786 -1.4175000 0.0030000\\
        -1.2643943 -1.4096429 0.0030000\\
        -1.2730987 -1.4017857 0.0030000\\
        -1.2816964 -1.3939286 0.0030000\\
        -1.2917857 -1.3845839 0.0030000\\
        -1.2996429 -1.3772117 0.0030000\\
        -1.3075000 -1.3697548 0.0030000\\
        -1.3153571 -1.3622118 0.0030000\\
        -1.3271429 -1.3507325 0.0030000\\
        -1.3350000 -1.3429672 0.0030000\\
        -1.3428571 -1.3351107 0.0030000\\
        -1.3507143 -1.3271614 0.0030000\\
        -1.3585714 -1.3191169 0.0030000\\
        -1.3664286 -1.3109759 0.0030000\\
        -1.3742857 -1.3027366 0.0030000\\
        -1.3821429 -1.2943972 0.0030000\\
        -1.3918728 -1.2839286 0.0030000\\
        -1.3990800 -1.2760714 0.0030000\\
        -1.4062067 -1.2682143 0.0030000\\
        -1.4167467 -1.2564286 0.0030000\\
        -1.4236755 -1.2485714 0.0030000\\
        -1.4305283 -1.2407143 0.0030000\\
        -1.4373062 -1.2328571 0.0030000\\
        -1.4473311 -1.2210714 0.0030000\\
        -1.4539238 -1.2132143 0.0030000\\
        -1.4607143 -1.2050307 0.0030000\\
        -1.4685714 -1.1954416 0.0030000\\
        -1.4764376 -1.1857143 0.0030000\\
        -1.4842857 -1.1758735 0.0030000\\
        -1.4919998 -1.1660714 0.0030000\\
        -1.5000000 -1.1557607 0.0030000\\
};

\addplot3[
    surf,
    shader=interp,
    draw=none,
    mesh/rows=17,
    mesh/cols=17,
    mesh/ordering=x varies,
    point meta=explicit
] table[
    row sep=crcr,
    x=yone,
    y=ytwo,
    z=value,
    meta=meta
] {
    yone ytwo value meta\\
        -1.5000000 -1.5000000 3.9571068 3.9571068\\
        -1.3500000 -1.5000000 3.7433568 3.7433568\\
        -1.2000000 -1.5000000 3.5521068 3.5521068\\
        -1.0500000 -1.5000000 3.3833568 3.3833568\\
        -1.0000000 -1.5000000 3.3321068 3.3321068\\
        -0.8500000 -1.5000000 3.1820210 3.1820210\\
        -0.6500000 -1.5000000 2.9859706 2.9859706\\
        -0.4500000 -1.5000000 2.8023432 2.8023432\\
        -0.2000000 -1.5000000 2.6032052 2.6032052\\
        0.0000000 -1.5000000 2.4785534 2.4785534\\
        0.2000000 -1.5000000 2.3739016 2.3739016\\
        0.4000000 -1.5000000 2.2777351 2.2777351\\
        0.6000000 -1.5000000 2.1985391 2.1985391\\
        0.8000000 -1.5000000 2.1447990 2.1447990\\
        1.0000000 -1.5000000 2.1250000 2.1250000\\
        1.1250000 -1.5000000 2.1250000 2.1250000\\
        1.2500000 -1.5000000 2.1250000 2.1250000\\
        -1.5000000 -1.3500000 3.7433568 3.7433568\\
        -1.3500000 -1.3500000 3.5296068 3.5296068\\
        -1.2000000 -1.3500000 3.3383568 3.3383568\\
        -1.0500000 -1.3500000 3.1696068 3.1696068\\
        -1.0000000 -1.3500000 3.1183568 3.1183568\\
        -0.8500000 -1.3500000 2.9682710 2.9682710\\
        -0.6500000 -1.3500000 2.7722206 2.7722206\\
        -0.4500000 -1.3500000 2.5885932 2.5885932\\
        -0.2000000 -1.3500000 2.3894552 2.3894552\\
        0.0000000 -1.3500000 2.2648034 2.2648034\\
        0.2000000 -1.3500000 2.1601516 2.1601516\\
        0.4000000 -1.3500000 2.0639851 2.0639851\\
        0.6000000 -1.3500000 1.9847891 1.9847891\\
        0.8000000 -1.3500000 1.9310490 1.9310490\\
        1.0000000 -1.3500000 1.9112500 1.9112500\\
        1.1250000 -1.3500000 1.9112500 1.9112500\\
        1.2500000 -1.3500000 1.9112500 1.9112500\\
        -1.5000000 -1.2000000 3.5521068 3.5521068\\
        -1.3500000 -1.2000000 3.3383568 3.3383568\\
        -1.2000000 -1.2000000 3.1471068 3.1471068\\
        -1.0500000 -1.2000000 2.9783568 2.9783568\\
        -1.0000000 -1.2000000 2.9271068 2.9271068\\
        -0.8500000 -1.2000000 2.7770210 2.7770210\\
        -0.6500000 -1.2000000 2.5809706 2.5809706\\
        -0.4500000 -1.2000000 2.3973432 2.3973432\\
        -0.2000000 -1.2000000 2.1982052 2.1982052\\
        0.0000000 -1.2000000 2.0735534 2.0735534\\
        0.2000000 -1.2000000 1.9689016 1.9689016\\
        0.4000000 -1.2000000 1.8727351 1.8727351\\
        0.6000000 -1.2000000 1.7935391 1.7935391\\
        0.8000000 -1.2000000 1.7397990 1.7397990\\
        1.0000000 -1.2000000 1.7200000 1.7200000\\
        1.1250000 -1.2000000 1.7200000 1.7200000\\
        1.2500000 -1.2000000 1.7200000 1.7200000\\
        -1.5000000 -1.0500000 3.3833568 3.3833568\\
        -1.3500000 -1.0500000 3.1696068 3.1696068\\
        -1.2000000 -1.0500000 2.9783568 2.9783568\\
        -1.0500000 -1.0500000 2.8096068 2.8096068\\
        -1.0000000 -1.0500000 2.7583568 2.7583568\\
        -0.8500000 -1.0500000 2.6082710 2.6082710\\
        -0.6500000 -1.0500000 2.4122206 2.4122206\\
        -0.4500000 -1.0500000 2.2285932 2.2285932\\
        -0.2000000 -1.0500000 2.0294552 2.0294552\\
        0.0000000 -1.0500000 1.9048034 1.9048034\\
        0.2000000 -1.0500000 1.8001516 1.8001516\\
        0.4000000 -1.0500000 1.7039851 1.7039851\\
        0.6000000 -1.0500000 1.6247891 1.6247891\\
        0.8000000 -1.0500000 1.5710490 1.5710490\\
        1.0000000 -1.0500000 1.5512500 1.5512500\\
        1.1250000 -1.0500000 1.5512500 1.5512500\\
        1.2500000 -1.0500000 1.5512500 1.5512500\\
        -1.5000000 -1.0000000 3.3321068 3.3321068\\
        -1.3500000 -1.0000000 3.1183568 3.1183568\\
        -1.2000000 -1.0000000 2.9271068 2.9271068\\
        -1.0500000 -1.0000000 2.7583568 2.7583568\\
        -1.0000000 -1.0000000 2.7071068 2.7071068\\
        -0.8500000 -1.0000000 2.5570210 2.5570210\\
        -0.6500000 -1.0000000 2.3609706 2.3609706\\
        -0.4500000 -1.0000000 2.1773432 2.1773432\\
        -0.2000000 -1.0000000 1.9782052 1.9782052\\
        0.0000000 -1.0000000 1.8535534 1.8535534\\
        0.2000000 -1.0000000 1.7489016 1.7489016\\
        0.4000000 -1.0000000 1.6527351 1.6527351\\
        0.6000000 -1.0000000 1.5735391 1.5735391\\
        0.8000000 -1.0000000 1.5197990 1.5197990\\
        1.0000000 -1.0000000 1.5000000 1.5000000\\
        1.1250000 -1.0000000 1.5000000 1.5000000\\
        1.2500000 -1.0000000 1.5000000 1.5000000\\
        -1.5000000 -0.8500000 3.1933568 3.1933568\\
        -1.3500000 -0.8500000 2.9796068 2.9796068\\
        -1.2000000 -0.8500000 2.7883568 2.7883568\\
        -1.0500000 -0.8500000 2.6196068 2.6196068\\
        -1.0000000 -0.8500000 2.5683568 2.5683568\\
        -0.8500000 -0.8500000 2.4182710 2.4182710\\
        -0.6500000 -0.8500000 2.2222206 2.2222206\\
        -0.4500000 -0.8500000 2.0385932 2.0385932\\
        -0.2000000 -0.8500000 1.8394552 1.8394552\\
        0.0000000 -0.8500000 1.7148034 1.7148034\\
        0.2000000 -0.8500000 1.6084843 1.6084843\\
        0.4000000 -0.8500000 1.5083421 1.5083421\\
        0.6000000 -0.8500000 1.4244009 1.4244009\\
        0.8000000 -0.8500000 1.3666850 1.3666850\\
        1.0000000 -0.8500000 1.3452187 1.3452187\\
        1.1250000 -0.8500000 1.3452187 1.3452187\\
        1.2500000 -0.8500000 1.3452187 1.3452187\\
        -1.5000000 -0.6500000 3.0433568 3.0433568\\
        -1.3500000 -0.6500000 2.8296068 2.8296068\\
        -1.2000000 -0.6500000 2.6383568 2.6383568\\
        -1.0500000 -0.6500000 2.4696068 2.4696068\\
        -1.0000000 -0.6500000 2.4183568 2.4183568\\
        -0.8500000 -0.6500000 2.2682710 2.2682710\\
        -0.6500000 -0.6500000 2.0722206 2.0722206\\
        -0.4500000 -0.6500000 1.8885932 1.8885932\\
        -0.2000000 -0.6500000 1.6894552 1.6894552\\
        0.0000000 -0.6500000 1.5648034 1.5648034\\
        0.2000000 -0.6500000 1.4517113 1.4517113\\
        0.4000000 -0.6500000 1.3354181 1.3354181\\
        0.6000000 -0.6500000 1.2321999 1.2321999\\
        0.8000000 -0.6500000 1.1583330 1.1583330\\
        1.0000000 -0.6500000 1.1300937 1.1300937\\
        1.1250000 -0.6500000 1.1300937 1.1300937\\
        1.2500000 -0.6500000 1.1300937 1.1300937\\
        -1.5000000 -0.4500000 2.9333568 2.9333568\\
        -1.3500000 -0.4500000 2.7196068 2.7196068\\
        -1.2000000 -0.4500000 2.5283568 2.5283568\\
        -1.0500000 -0.4500000 2.3596068 2.3596068\\
        -1.0000000 -0.4500000 2.3083568 2.3083568\\
        -0.8500000 -0.4500000 2.1582710 2.1582710\\
        -0.6500000 -0.4500000 1.9622206 1.9622206\\
        -0.4500000 -0.4500000 1.7785932 1.7785932\\
        -0.2000000 -0.4500000 1.5794552 1.5794552\\
        0.0000000 -0.4500000 1.4548034 1.4548034\\
        0.2000000 -0.4500000 1.3308823 1.3308823\\
        0.4000000 -0.4500000 1.1887661 1.1887661\\
        0.6000000 -0.4500000 1.0547269 1.0547269\\
        0.8000000 -0.4500000 0.9550370 0.9550370\\
        1.0000000 -0.4500000 0.9159687 0.9159687\\
        1.1250000 -0.4500000 0.9159687 0.9159687\\
        1.2500000 -0.4500000 0.9159687 0.9159687\\
        -1.5000000 -0.2000000 2.8521068 2.8521068\\
        -1.3500000 -0.2000000 2.6383568 2.6383568\\
        -1.2000000 -0.2000000 2.4471068 2.4471068\\
        -1.0500000 -0.2000000 2.2783568 2.2783568\\
        -1.0000000 -0.2000000 2.2271068 2.2271068\\
        -0.8500000 -0.2000000 2.0770210 2.0770210\\
        -0.6500000 -0.2000000 1.8809706 1.8809706\\
        -0.4500000 -0.2000000 1.6973432 1.6973432\\
        -0.2000000 -0.2000000 1.4982052 1.4982052\\
        0.0000000 -0.2000000 1.3735534 1.3735534\\
        0.2000000 -0.2000000 1.2322936 1.2322936\\
        0.4000000 -0.2000000 1.0488311 1.0488311\\
        0.6000000 -0.2000000 0.8654431 0.8654431\\
        0.8000000 -0.2000000 0.7244070 0.7244070\\
        1.0000000 -0.2000000 0.6680000 0.6680000\\
        1.1250000 -0.2000000 0.6680000 0.6680000\\
        1.2500000 -0.2000000 0.6680000 0.6680000\\
        -1.5000000 0.0000000 2.8321068 2.8321068\\
        -1.3500000 0.0000000 2.6183568 2.6183568\\
        -1.2000000 0.0000000 2.4271068 2.4271068\\
        -1.0500000 0.0000000 2.2583568 2.2583568\\
        -1.0000000 0.0000000 2.2071068 2.2071068\\
        -0.8500000 0.0000000 2.0570210 2.0570210\\
        -0.6500000 0.0000000 1.8609706 1.8609706\\
        -0.4500000 0.0000000 1.6773432 1.6773432\\
        -0.2000000 0.0000000 1.4782052 1.4782052\\
        0.0000000 0.0000000 1.3535534 1.3535534\\
        0.2000000 0.0000000 1.1969016 1.1969016\\
        0.4000000 0.0000000 0.9767351 0.9767351\\
        0.6000000 0.0000000 0.7495391 0.7495391\\
        0.8000000 0.0000000 0.5717990 0.5717990\\
        1.0000000 0.0000000 0.5000000 0.5000000\\
        1.1250000 0.0000000 0.5000000 0.5000000\\
        1.2500000 0.0000000 0.5000000 0.5000000\\
        -1.5000000 0.2000000 2.8321068 2.8321068\\
        -1.3500000 0.2000000 2.6183568 2.6183568\\
        -1.2000000 0.2000000 2.4271068 2.4271068\\
        -1.0500000 0.2000000 2.2583568 2.2583568\\
        -1.0000000 0.2000000 2.2071068 2.2071068\\
        -0.8500000 0.2000000 2.0570210 2.0570210\\
        -0.6500000 0.2000000 1.8609706 1.8609706\\
        -0.4500000 0.2000000 1.6773432 1.6773432\\
        -0.2000000 0.2000000 1.4782052 1.4782052\\
        0.0000000 0.2000000 1.3535534 1.3535534\\
        0.2000000 0.2000000 1.1815096 1.1815096\\
        0.4000000 0.2000000 0.9246391 0.9246391\\
        0.6000000 0.2000000 0.6536351 0.6536351\\
        0.8000000 0.2000000 0.4391910 0.4391910\\
        1.0000000 0.2000000 0.3520000 0.3520000\\
        1.1250000 0.2000000 0.3520000 0.3520000\\
        1.2500000 0.2000000 0.3520000 0.3520000\\
        -1.5000000 0.4000000 2.8321068 2.8321068\\
        -1.3500000 0.4000000 2.6183568 2.6183568\\
        -1.2000000 0.4000000 2.4271068 2.4271068\\
        -1.0500000 0.4000000 2.2583568 2.2583568\\
        -1.0000000 0.4000000 2.2071068 2.2071068\\
        -0.8500000 0.4000000 2.0570210 2.0570210\\
        -0.6500000 0.4000000 1.8609706 1.8609706\\
        -0.4500000 0.4000000 1.6773432 1.6773432\\
        -0.2000000 0.4000000 1.4782052 1.4782052\\
        0.0000000 0.4000000 1.3535534 1.3535534\\
        0.2000000 0.4000000 1.1673656 1.1673656\\
        0.4000000 0.4000000 0.8767671 0.8767671\\
        0.6000000 0.4000000 0.5655071 0.5655071\\
        0.8000000 0.4000000 0.3173350 0.3173350\\
        1.0000000 0.4000000 0.2160000 0.2160000\\
        1.1250000 0.4000000 0.2160000 0.2160000\\
        1.2500000 0.4000000 0.2160000 0.2160000\\
        -1.5000000 0.6000000 2.8321068 2.8321068\\
        -1.3500000 0.6000000 2.6183568 2.6183568\\
        -1.2000000 0.6000000 2.4271068 2.4271068\\
        -1.0500000 0.6000000 2.2583568 2.2583568\\
        -1.0000000 0.6000000 2.2071068 2.2071068\\
        -0.8500000 0.6000000 2.0570210 2.0570210\\
        -0.6500000 0.6000000 1.8609706 1.8609706\\
        -0.4500000 0.6000000 1.6773432 1.6773432\\
        -0.2000000 0.6000000 1.4782052 1.4782052\\
        0.0000000 0.6000000 1.3535534 1.3535534\\
        0.2000000 0.6000000 1.1557176 1.1557176\\
        0.4000000 0.6000000 0.8373431 0.8373431\\
        0.6000000 0.6000000 0.4929311 0.4929311\\
        0.8000000 0.6000000 0.2169830 0.2169830\\
        1.0000000 0.6000000 0.1040000 0.1040000\\
        1.1250000 0.6000000 0.1040000 0.1040000\\
        1.2500000 0.6000000 0.1040000 0.1040000\\
        -1.5000000 0.8000000 2.8321068 2.8321068\\
        -1.3500000 0.8000000 2.6183568 2.6183568\\
        -1.2000000 0.8000000 2.4271068 2.4271068\\
        -1.0500000 0.8000000 2.2583568 2.2583568\\
        -1.0000000 0.8000000 2.2071068 2.2071068\\
        -0.8500000 0.8000000 2.0570210 2.0570210\\
        -0.6500000 0.8000000 1.8609706 1.8609706\\
        -0.4500000 0.8000000 1.6773432 1.6773432\\
        -0.2000000 0.8000000 1.4782052 1.4782052\\
        0.0000000 0.8000000 1.3535534 1.3535534\\
        0.2000000 0.8000000 1.1478136 1.1478136\\
        0.4000000 0.8000000 0.8105911 0.8105911\\
        0.6000000 0.8000000 0.4436831 0.4436831\\
        0.8000000 0.8000000 0.1488870 0.1488870\\
        1.0000000 0.8000000 0.0280000 0.0280000\\
        1.1250000 0.8000000 0.0280000 0.0280000\\
        1.2500000 0.8000000 0.0280000 0.0280000\\
        -1.5000000 1.0000000 2.8321068 2.8321068\\
        -1.3500000 1.0000000 2.6183568 2.6183568\\
        -1.2000000 1.0000000 2.4271068 2.4271068\\
        -1.0500000 1.0000000 2.2583568 2.2583568\\
        -1.0000000 1.0000000 2.2071068 2.2071068\\
        -0.8500000 1.0000000 2.0570210 2.0570210\\
        -0.6500000 1.0000000 1.8609706 1.8609706\\
        -0.4500000 1.0000000 1.6773432 1.6773432\\
        -0.2000000 1.0000000 1.4782052 1.4782052\\
        0.0000000 1.0000000 1.3535534 1.3535534\\
        0.2000000 1.0000000 1.1449016 1.1449016\\
        0.4000000 1.0000000 0.8007351 0.8007351\\
        0.6000000 1.0000000 0.4255391 0.4255391\\
        0.8000000 1.0000000 0.1237990 0.1237990\\
        1.0000000 1.0000000 0.0000000 0.0000000\\
        1.1250000 1.0000000 0.0000000 0.0000000\\
        1.2500000 1.0000000 0.0000000 0.0000000\\
        -1.5000000 1.1250000 2.8321068 2.8321068\\
        -1.3500000 1.1250000 2.6183568 2.6183568\\
        -1.2000000 1.1250000 2.4271068 2.4271068\\
        -1.0500000 1.1250000 2.2583568 2.2583568\\
        -1.0000000 1.1250000 2.2071068 2.2071068\\
        -0.8500000 1.1250000 2.0570210 2.0570210\\
        -0.6500000 1.1250000 1.8609706 1.8609706\\
        -0.4500000 1.1250000 1.6773432 1.6773432\\
        -0.2000000 1.1250000 1.4782052 1.4782052\\
        0.0000000 1.1250000 1.3535534 1.3535534\\
        0.2000000 1.1250000 1.1449016 1.1449016\\
        0.4000000 1.1250000 0.8007351 0.8007351\\
        0.6000000 1.1250000 0.4255391 0.4255391\\
        0.8000000 1.1250000 0.1237990 0.1237990\\
        1.0000000 1.1250000 0.0000000 0.0000000\\
        1.1250000 1.1250000 0.0000000 0.0000000\\
        1.2500000 1.1250000 0.0000000 0.0000000\\
        -1.5000000 1.2500000 2.8321068 2.8321068\\
        -1.3500000 1.2500000 2.6183568 2.6183568\\
        -1.2000000 1.2500000 2.4271068 2.4271068\\
        -1.0500000 1.2500000 2.2583568 2.2583568\\
        -1.0000000 1.2500000 2.2071068 2.2071068\\
        -0.8500000 1.2500000 2.0570210 2.0570210\\
        -0.6500000 1.2500000 1.8609706 1.8609706\\
        -0.4500000 1.2500000 1.6773432 1.6773432\\
        -0.2000000 1.2500000 1.4782052 1.4782052\\
        0.0000000 1.2500000 1.3535534 1.3535534\\
        0.2000000 1.2500000 1.1449016 1.1449016\\
        0.4000000 1.2500000 0.8007351 0.8007351\\
        0.6000000 1.2500000 0.4255391 0.4255391\\
        0.8000000 1.2500000 0.1237990 0.1237990\\
        1.0000000 1.2500000 0.0000000 0.0000000\\
        1.1250000 1.2500000 0.0000000 0.0000000\\
        1.2500000 1.2500000 0.0000000 0.0000000\\
};

\addplot3[
    draw=violet!85!black,
    very thick
] table[
    row sep=crcr,
    x=yone,
    y=ytwo,
    z=value
] {
    yone ytwo value\\
        -1.5000000 1.0000000 2.8321068\\
        -1.4541667 1.0000000 2.7644071\\
        -1.4083333 1.0000000 2.6988082\\
        -1.3625000 1.0000000 2.6353099\\
        -1.3166667 1.0000000 2.5739123\\
        -1.2708333 1.0000000 2.5146155\\
        -1.2250000 1.0000000 2.4574193\\
        -1.1791667 1.0000000 2.4023238\\
        -1.1333333 1.0000000 2.3493290\\
        -1.0875000 1.0000000 2.2984349\\
        -1.0416667 1.0000000 2.2496415\\
        -1.0000000 1.0000000 2.2071068\\
        -0.9958333 1.0000000 2.2029396\\
        -0.9500000 1.0000000 2.1570531\\
        -0.9041667 1.0000000 2.1111505\\
        -0.8583333 1.0000000 2.0653340\\
        -0.8125000 1.0000000 2.0197058\\
        -0.7666667 1.0000000 1.9743679\\
        -0.7208333 1.0000000 1.9294224\\
        -0.6750000 1.0000000 1.8849716\\
        -0.6291667 1.0000000 1.8411174\\
        -0.5833333 1.0000000 1.7979621\\
        -0.5375000 1.0000000 1.7556078\\
        -0.4916667 1.0000000 1.7141565\\
        -0.4458333 1.0000000 1.6737105\\
        -0.4000000 1.0000000 1.6343717\\
        -0.3541667 1.0000000 1.5962424\\
        -0.3083333 1.0000000 1.5594247\\
        -0.2625000 1.0000000 1.5240206\\
        -0.2166667 1.0000000 1.4901324\\
        -0.1708333 1.0000000 1.4578621\\
        -0.1250000 1.0000000 1.4273119\\
        -0.0791667 1.0000000 1.3985838\\
        -0.0333333 1.0000000 1.3717801\\
        0.0000000 1.0000000 1.3535534\\
        0.0125000 1.0000000 1.3464598\\
        0.0583333 1.0000000 1.3128412\\
        0.1041667 1.0000000 1.2682190\\
        0.1500000 1.0000000 1.2138505\\
        0.1958333 1.0000000 1.1509933\\
        0.2416667 1.0000000 1.0809050\\
        0.2875000 1.0000000 1.0048429\\
        0.3333333 1.0000000 0.9240647\\
        0.3791667 1.0000000 0.8398278\\
        0.4250000 1.0000000 0.7533897\\
        0.4708333 1.0000000 0.6660080\\
        0.5166667 1.0000000 0.5789400\\
        0.5625000 1.0000000 0.4934435\\
        0.6083333 1.0000000 0.4107757\\
        0.6541667 1.0000000 0.3321943\\
        0.7000000 1.0000000 0.2589567\\
        0.7458333 1.0000000 0.1923205\\
        0.7916667 1.0000000 0.1335432\\
        0.8375000 1.0000000 0.0838822\\
        0.8833333 1.0000000 0.0445951\\
        0.9291667 1.0000000 0.0169393\\
        0.9750000 1.0000000 0.0021724\\
        1.0000000 1.0000000 0.0000000\\
        1.0208333 1.0000000 0.0000000\\
        1.0666667 1.0000000 0.0000000\\
        1.1125000 1.0000000 0.0000000\\
        1.1583333 1.0000000 0.0000000\\
        1.2041667 1.0000000 0.0000000\\
        1.2500000 1.0000000 0.0000000\\
};

\addplot3[
    blue!85!black,
    very thick
] table[
    row sep=crcr,
    x=yone,
    y=ytwo,
    z=value
] {
    yone ytwo value\\
        1.0000000 -1.5000000 2.1250000\\
        1.0000000 -1.4541667 2.0573003\\
        1.0000000 -1.4083333 1.9917014\\
        1.0000000 -1.3625000 1.9282031\\
        1.0000000 -1.3166667 1.8668056\\
        1.0000000 -1.2708333 1.8075087\\
        1.0000000 -1.2250000 1.7503125\\
        1.0000000 -1.1791667 1.6952170\\
        1.0000000 -1.1333333 1.6422222\\
        1.0000000 -1.0875000 1.5913281\\
        1.0000000 -1.0416667 1.5425347\\
        1.0000000 -1.0000000 1.5000000\\
        1.0000000 -0.9958333 1.4958290\\
        1.0000000 -0.9500000 1.4494063\\
        1.0000000 -0.9041667 1.4020907\\
        1.0000000 -0.8583333 1.3540268\\
        1.0000000 -0.8125000 1.3053589\\
        1.0000000 -0.7666667 1.2562315\\
        1.0000000 -0.7208333 1.2067890\\
        1.0000000 -0.6750000 1.1571758\\
        1.0000000 -0.6291667 1.1075363\\
        1.0000000 -0.5833333 1.0580150\\
        1.0000000 -0.5375000 1.0087563\\
        1.0000000 -0.4916667 0.9599047\\
        1.0000000 -0.4458333 0.9116044\\
        1.0000000 -0.4000000 0.8640000\\
        1.0000000 -0.3541667 0.8172359\\
        1.0000000 -0.3083333 0.7714565\\
        1.0000000 -0.2625000 0.7268062\\
        1.0000000 -0.2166667 0.6834294\\
        1.0000000 -0.1708333 0.6414706\\
        1.0000000 -0.1250000 0.6010742\\
        1.0000000 -0.0791667 0.5623846\\
        1.0000000 -0.0333333 0.5255463\\
        1.0000000 0.0000000 0.5000000\\
        1.0000000 0.0125000 0.4906255\\
        1.0000000 0.0583333 0.4562996\\
        1.0000000 0.1041667 0.4221576\\
        1.0000000 0.1500000 0.3883438\\
        1.0000000 0.1958333 0.3550026\\
        1.0000000 0.2416667 0.3222785\\
        1.0000000 0.2875000 0.2903159\\
        1.0000000 0.3333333 0.2592593\\
        1.0000000 0.3791667 0.2292529\\
        1.0000000 0.4250000 0.2004414\\
        1.0000000 0.4708333 0.1729691\\
        1.0000000 0.5166667 0.1469803\\
        1.0000000 0.5625000 0.1226196\\
        1.0000000 0.6083333 0.1000314\\
        1.0000000 0.6541667 0.0793600\\
        1.0000000 0.7000000 0.0607500\\
        1.0000000 0.7458333 0.0443457\\
        1.0000000 0.7916667 0.0302915\\
        1.0000000 0.8375000 0.0187319\\
        1.0000000 0.8833333 0.0098113\\
        1.0000000 0.9291667 0.0036742\\
        1.0000000 0.9750000 0.0004648\\
        1.0000000 1.0000000 0.0000000\\
        1.0000000 1.0208333 0.0000000\\
        1.0000000 1.0666667 0.0000000\\
        1.0000000 1.1125000 0.0000000\\
        1.0000000 1.1583333 0.0000000\\
        1.0000000 1.2041667 0.0000000\\
        1.0000000 1.2500000 0.0000000\\
};

\end{axis}
\end{tikzpicture}

\caption{The dual chain function $H(y_1,y_2)$ for $N=2$.}
\label{fig:ncpl-dual-chain-H}
\end{figure}

The weights are defined recursively by
\begin{equation}
\label{eq:weight-sequence}
\omega_{N-1}=\omega_N=1,
\quad \text{and} \quad 
\omega_{j-1}=\sqrt{\frac{\omega_j}{N}},
\quad
j=N-1,\ldots,1.
\end{equation}
The backward recursion controls the cumulative weights while
keeping the terminal coordinate at unit scale. We write
$\bm{\omega}:=(\omega_1,\ldots,\omega_N)$. The estimates needed below
are collected in Lemma~\ref{lem:ncpl-weight-estimates}.

The following two lemmas record the properties of $q$, $p$, and the weight sequence used in our analysis. Their proofs are deferred to Appendices~\ref{app:proof-ncpl-gate-bounds} and~\ref{app:proof-ncpl-weight-estimates}, respectively.
\begin{lemma}
\label{lem:ncpl-gate-bounds}
For every $t\in\R$,
\[
0\leq q(t)\leq1,
\qquad
0\leq q'(t)\leq\frac32,
\qquad\text{and}\qquad
\Lip(q')\leq6,
\]
and
\[
0\leq p(t)\leq1,
\qquad
0\leq p'(t)\leq\frac34,
\qquad\text{and}\qquad
\Lip(p')\leq\frac32.
\]
Moreover,
\[
p(t)\geq q(t),
\quad 
\text{and} \quad
1-q(t)
\leq
2\left((p'(t))^2+(t^-)^2\right).
\]
\end{lemma}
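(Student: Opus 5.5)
The proof is a direct piecewise verification, since $q$ and $p$ are explicit piecewise polynomials and $p$ is an affine reparametrization of $q$. I will establish the bounds for $q$ first and transfer them to $p$ through the chain rule. After that, I will check the two comparison inequalities separately on the regions $t\le -1$, $-1<t<1$, and $t\ge 1$.

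For $q$, continuity at $0$ and $1$ is immediate: $q(0)=0$, $q(1)=3-2=1$, and $q'(t)=6t(1-t)$ vanishes at both endpoints, so $q\in C^1$. On $[0,1]$ we have $q'\ge0$, hence $q$ is nondecreasing and $0\le q\le1$. The maximum of $6t(1-t)$ is $3/2$, attained at $t=1/2$. The function $q'$ is continuous and piecewise $C^1$, with $q''(t)=6-12t$ on $(0,1)$ and $q''=0$ elsewhere. Since $|q''|\le 6$ almost everywhere, we get $\Lip(q')\le6$.

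For $p(t)=q((t+1)/2)$, the chain rule gives $p'(t)=\tfrac12 q'((t+1)/2)$. This yields $0\le p'\le 3/4$ and $\Lip(p')\le \tfrac14\cdot 6=3/2$, while $0\le p\le1$ is inherited from $q$.

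For $p\ge q$, I use that $q$ is nondecreasing. When $t\ge -1$ we have $(t+1)/2\ge t$ exactly when $t\le1$. So for $t\le 1$ we get $p(t)=q((t+1)/2)\ge q(t)$; this also covers $t<-1$, where both sides vanish except that $q(t)=0$ anyway. For $t\ge1$, both $p$ and $q$ equal $1$.

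The main step is the inequality $1-q(t)\le 2\bigl((p'(t))^2+(t^-)^2\bigr)$, which I handle in three cases.
\begin{itemize}
\item If $t\ge1$, the left side is $0$ and the inequality is trivial.
\item If $t\le0$, then $1-q(t)=1$. For $t\le -1$ the term $(t^-)^2=t^2\ge1$ already gives the bound. For $-1<t\le0$ the right side is $2\bigl(\tfrac9{16}(1-t^2)^2+t^2\bigr)$. Writing $s=t^2\in[0,1)$, I need $\tfrac98(1-s)^2+2s\ge1$. The derivative of the left side is $-\tfrac94(1-s)+2$, which changes sign, so the minimum occurs at $1-s=8/9$. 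There the value is $\tfrac98\cdot\tfrac{64}{81}+\tfrac29=\tfrac89+\tfrac29=\tfrac{10}9\ge1$, so the inequality holds.
\item If $0<t<1$, then $1-q(t)=(1-t)^2(1+2t)$, and I need $(1-t)^2(1+2t)\le \tfrac98(1-t)^2(1+t)^2$. Dividing by $(1-t)^2$, this reduces to $1+2t\le\tfrac98(1+2t+t^2)$, which is clear since $\tfrac98\ge1$.
\end{itemize}
The factorization $1-3t^2+2t^3=(1-t)^2(1+2t)$ is what makes the last case work. The expected main obstacle is the small one-variable minimization on $(-1,0]$, which has now been checked.
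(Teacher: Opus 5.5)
Your proof is correct and follows the same piecewise verification as the paper. You bound $q$ by direct differentiation, transfer the bounds to $p$ via the chain rule, get $p\ge q$ from monotonicity of $q$ and $(t+1)/2\ge t$ for $t\le1$, and prove the final inequality with the factorization $1-q(t)=(1-t)^2(1+2t)$ on $[0,1]$ plus a one-variable check in $u=t^2$ on $[-1,0]$. The only cosmetic difference is in that last check: you minimize $\tfrac98(1-s)^2+2s$ by calculus, finding minimum $\tfrac{10}{9}$ at $s=\tfrac19$, whereas the paper observes directly that $2\bigl((p'(t))^2+(t^-)^2\bigr)-1=\tfrac{1-2u+9u^2}{8}$ has negative discriminant.
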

\begin{lemma}
\label{lem:ncpl-weight-estimates}
Let $N\geq2$, and let $\{\omega_j\}_{j=0}^N$ be the weight sequence defined by \eqref{eq:weight-sequence}. Then
\begin{equation}
\label{eq:monotone-weight}
\frac1N\leq\omega_0\leq\omega_1\leq\cdots
\leq\omega_{N-1}=\omega_N=1.
\end{equation}
Furthermore,
\begin{equation}
\sum_{j=1}^N\omega_{j-1}<3,
\quad \text{and} \quad
\sum_{j=1}^N\frac{\omega_{j-1}^2}{\omega_j}<2.
\label{eq:ncpl-weight-sums}
\end{equation}
\end{lemma}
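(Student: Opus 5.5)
The plan is to solve the recursion \eqref{eq:weight-sequence} in closed form and then bound each sum directly.

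\textbf{Closed form.} Index from the top by setting $k:=N-1-j$. I claim that
\[
\omega_{N-1-k}=N^{-(1-2^{-k})},\qquad k=0,1,\ldots,N-1.
\]
This holds at $k=0$ because $\omega_{N-1}=1$. If it holds for $k$, then
\[
\omega_{N-2-k}=\sqrt{\omega_{N-1-k}/N}=N^{-\frac12(1-2^{-k})-\frac12}=N^{-(1-2^{-(k+1)})}.
\]

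\textbf{Monotonicity \eqref{eq:monotone-weight}.} Every exponent $1-2^{-k}$ lies in $[0,1)$, so $\omega_j\ge 1/N$ for all $j$. The exponent increases with $k$, so $\omega_j$ increases with $j$. Equivalently, by induction, $\omega_{j-1}=\sqrt{\omega_j/N}\le\omega_j$ if and only if $\omega_j\ge 1/N$. Together with $\omega_{N-1}=\omega_N=1$, this gives \eqref{eq:monotone-weight}.

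\textbf{Second sum.} This one is immediate. For $j\le N-1$ the recursion gives $\omega_{j-1}^2/\omega_j=1/N$, and for $j=N$ the term is $\omega_{N-1}^2/\omega_N=1$. Hence
\[
\sum_{j=1}^N\frac{\omega_{j-1}^2}{\omega_j}=\frac{N-1}{N}+1<2.
\]

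\textbf{First sum (the main obstacle).} A crude bound such as $\omega_j\le N^{-3/4}$ for $j\le N-3$ fails, because there are $N-2$ such terms. Instead, write $\omega_{N-1-k}=a_k/N$ with $a_k:=N^{2^{-k}}$, so that $a_{k+1}=\sqrt{a_k}\ge1$. Then
\[
a_k-1=(a_{k+1}-1)(a_{k+1}+1)\ge 2(a_{k+1}-1),
\]
so the excesses $a_k-1$ decay geometrically: $a_k-1\le 2^{-(k-2)}(a_2-1)$ for $k\ge2$. Summing over $2\le k\le N-1$ gives
\[
\sum_{k=2}^{N-1}a_k\le N-2+2(N^{1/4}-1).
\]
Adding the terms $k=0$ and $k=1$, which contribute $1$ and $N^{-1/2}$, yields
\[
\sum_{j=1}^N\omega_{j-1}\le 1+N^{-1/2}+1-\frac{4}{N}+2N^{-3/4}<2+N^{-1/2}+2N^{-3/4}.
\]
For $N=2$ the sum is exactly $1+2^{-1/2}<3$. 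For $N\ge3$ it suffices that $N^{-1/2}+2N^{-3/4}<1$, and the left side is decreasing in $N$. At $N=4$ it equals $0.5+2^{-1/2}<1$. For $N=3$ I would instead use the exact sum, $\omega_0+\omega_1+\omega_2=3^{-3/4}+3^{-1/2}+1\approx1.86<3$. This establishes \eqref{eq:ncpl-weight-sums}.
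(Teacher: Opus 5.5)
Your closed form $\omega_{N-1-k}=N^{-(1-2^{-k})}$, the monotonicity argument, the second sum, and the geometric-decay estimate
\[
\sum_{j=1}^N\omega_{j-1}\le 2+N^{-1/2}-\frac4N+2N^{-3/4}
\]
are all correct. The gap is in the last step. After discarding $-4/N$, you claim $N^{-1/2}+2N^{-3/4}<1$ for every $N\ge4$, because ``at $N=4$ it equals $0.5+2^{-1/2}<1$.'' That is false: $0.5+2^{-1/2}\approx1.21$. At $N=5$ the quantity is still $\approx1.05$, and it first drops below $1$ at $N=6$. So your bound $2+N^{-1/2}+2N^{-3/4}$ exceeds $3$ for $N=4,5$, and those cases are not covered. (Separately, the exact sum for $N=3$ is $1+3^{-1/2}+3^{-3/4}\approx2.02$, not $1.86$; that slip is harmless.)

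The repair is to keep the term you dropped. Since $2N^{-3/4}-\frac4N=\frac2N\bigl(N^{1/4}-2\bigr)\le0$ for $N\le16$, your estimate gives $\sum_{j}\omega_{j-1}\le2+N^{-1/2}\le2+2^{-1/2}<3$ for $2\le N\le16$. Your bound on $\sum_{k=2}^{N-1}a_k$ remains valid, though loose, when that sum is empty. For $N\ge16$, you have $N^{-1/2}+2N^{-3/4}\le\frac14+\frac14<1$. This also removes the need to treat $N=2,3$ separately.

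For comparison, the paper avoids both the closed form and the casework. Set $S:=\sum_{j=1}^{N-1}\omega_{j-1}=\sum_{j=1}^{N-1}\sqrt{\omega_j/N}$. AM--GM gives $\sqrt{\omega_j/N}\le\frac12\bigl(\omega_j+\frac1N\bigr)$, and summing yields $S\le\frac12\bigl(S-\omega_0+1+\frac{N-1}{N}\bigr)<\frac12S+1$. Hence $S<2$, and the full sum is $S+\omega_{N-1}<3$. Your route gives explicit weights and a slightly better uniform constant. However, it depends on numerical verification, and that is exactly where the slip occurred.
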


The preceding bounds yield the first key estimate for the dual
chain: a strengthened weighted P\L{} inequality that controls both
$H(\y)$ and the terminal activation deficit $1-q(y_N)$.
\begin{proposition}[Strengthened weighted P\L{} estimate for the dual chain]
\label{lem:ncpl-inner-technical}
For every $N\geq2$ and $\y\in\R^N$,
\begin{equation*}
H(\y)+10\bigl(1-q(y_N)\bigr)
\leq
320N\norm{\nabla H(\y)}_{*,\bm{\omega}}^2.
\end{equation*}
Moreover, $\nabla_{y_j}H(\y)\leq0$ for all $j\in[N]$.
\end{proposition}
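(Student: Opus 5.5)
The sign claim comes first, since everything else rests on it. Differentiating $H$ gives, for $j\in[N]$,
\[
\nabla_{y_j}H(\y)=-\omega_{j-1}q(y_{j-1})p'(y_j)-\omega_j y_j^- -\omega_j q'(y_j)p(y_{j+1}),
\]
where the last term is absent when $j=N$. By Lemma~\ref{lem:ncpl-gate-bounds} each of the three pieces is nonnegative before its minus sign, so $\nabla_{y_j}H(\y)\le0$. The same observation gives $|\nabla_{y_j}H|^2\ge \omega_{j-1}^2q(y_{j-1})^2p'(y_j)^2+\omega_j^2(y_j^-)^2$, and hence
\[
\norm{\nabla H(\y)}_{*,\bm\omega}^2\ \ge\ \sum_{j=1}^N\Bigl(\frac{\omega_{j-1}^2}{\omega_j}q(y_{j-1})^2p'(y_j)^2+\omega_j(y_j^-)^2\Bigr).
\]
The whole proof then consists of lower-bounding this sum by the left-hand side.

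\emph{Per-coordinate bound on the activation deficit.} Suppose index $j$ satisfies $q(y_{j-1})\ge\tfrac12$. The recursion \eqref{eq:weight-sequence} gives $\omega_{j-1}^2/\omega_j=1/N$ for $j\le N-1$. For $j=N$ this ratio equals $1$, which is only larger. Lemma~\ref{lem:ncpl-weight-estimates} gives $\omega_j\ge\omega_j^2\ge\omega_{j-1}^2$, hence $\omega_j\ge\omega_{j-1}^2/\omega_j\ge1/N$. Combining these with the inequality $1-q(t)\le2((p'(t))^2+(t^-)^2)$ from Lemma~\ref{lem:ncpl-gate-bounds}, the $j$-th summand is at least
\[
\tfrac1{4N}\bigl(p'(y_j)^2+(y_j^-)^2\bigr)\ \ge\ \tfrac1{8N}\bigl(1-q(y_j)\bigr).
\]
So $1-q(y_j)\le 8N\,G_j$, where $G_j$ denotes the $j$-th summand.

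\emph{Case split.} Let $j^\ast$ be the smallest index in $[N]$ with $q(y_{j^\ast})<\tfrac12$; such an index may not exist. Recall $q(y_0)=1$.

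\textbf{Case 1: $j^\ast$ does not exist.} Then $q(y_{j-1})\ge\tfrac12$ for every $j$, so the per-coordinate bound applies to all $j$. Since $p\ge q$, each interaction term satisfies
\[
1-q(y_{j-1})p(y_j)\ \le\ (1-q(y_{j-1}))+(1-p(y_j))\ \le\ (1-q(y_{j-1}))+(1-q(y_j)).
\]
Using $\omega_{j-1}\le1$ and the fact that the $y_0$ deficit vanishes, we get
\[
H(\y)+10(1-q(y_N))\ \le\ 12\sum_{j=1}^N(1-q(y_j))+\sum_{j=1}^N\tfrac{\omega_j}{2}(y_j^-)^2 .
\]
Each term on the right is controlled by $\mathcal O(N)\,G_j$.

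\textbf{Case 2: $j^\ast$ exists.} Then $q(y_{j^\ast-1})\ge\tfrac12$, so the per-coordinate bound at $j^\ast$ gives $\tfrac12<1-q(y_{j^\ast})\le 8N\,G_{j^\ast}$, i.e.\ $N\norm{\nabla H}_{*,\bm\omega}^2\ge\tfrac1{16}$. On the other side, $1-q(\cdot)p(\cdot)\le1$, so Lemma~\ref{lem:ncpl-weight-estimates} bounds the left-hand side by
\[
3+10+\sum_j\tfrac{\omega_j}{2}(y_j^-)^2 .
\]
The constant $13$ is absorbed by $16\cdot13\cdot N\norm{\nabla H}^2_{*,\bm\omega}$, and the quadratic part is bounded by $\norm{\nabla H}^2_{*,\bm\omega}$ directly.

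Both cases give the claimed inequality with constant at most $320$. The main obstacle, and the point to check carefully, is the weight bookkeeping behind the uniform inequality $\omega_{j-1}^2/\omega_j\ge1/N$ together with $\omega_j\ge\omega_{j-1}^2/\omega_j$. This is exactly what the backward recursion is designed to deliver, including at the boundary indices $j=N-1,N$. A second point is tracking the numeric constants so that the worse case (Case 2, with factor $16\cdot13$) stays below $320$.
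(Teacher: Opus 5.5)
Your proof is correct. It follows the paper's overall skeleton but handles the one nontrivial step, controlling the activation deficits, by a different route.

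\emph{The paper's route.} It uses no case split. For each $k$ it takes the smallest $j\le k$ with $1-q(y_j)\ge\frac12(1-q(y_k))$. This forces $q(y_{j-1})>\frac12$ and yields the uniform estimate $1-q(y_k)\le 16N\norm{\nabla H(\y)}_{*,\bm{\omega}}^2$ for every $k$. Each interaction term is then bounded by $32N\norm{\nabla H(\y)}_{*,\bm{\omega}}^2$ and summed using $\sum_j\omega_{j-1}<3$, giving the constant $256N+\frac12$.

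\emph{Your route.} You split on whether $q(y_j)$ ever falls below $\frac12$. In the fully activated case, your bound $1-q(y_j)\le 8N\,G_j$ is \emph{local}: it involves only the $j$-th summand. Summing over $j$ is therefore free, and only $\omega_{j-1}\le 1$ is needed, not the weight sum. In the other case, the first drop alone certifies $N\norm{\nabla H(\y)}_{*,\bm{\omega}}^2>\frac1{16}$. The boundedness of the gates together with $\sum_j\omega_{j-1}<3$ then caps the left-hand side by $13$ plus the quadratic part.

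\emph{What each buys.} Yours is the familiar ``large gradient or near-optimal'' dichotomy and gives a slightly better constant, $208N+\frac12$. The paper's halving trick avoids cases and gives a deficit bound at every index in terms of the full gradient norm. It applies that bound directly to the terminal term $10(1-q(y_N))$.

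A minor point: $\omega_j\ge 1/N$ is immediate from \eqref{eq:monotone-weight}, so your detour through $\omega_j\ge\omega_j^2\ge\omega_{j-1}^2$ is unnecessary.
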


\begin{proof}[Proof of Proposition~\ref{lem:ncpl-inner-technical}]
Direct differentiation of \(H\) yields 
\begin{equation}
    \label{eq:gradient-H}
    \nabla_{y_j}H(\y)
=
\begin{cases}
-\omega_{j-1}q(y_{j-1})p'(y_j)
-\omega_j y_j^-
-\omega_jq'(y_j)p(y_{j+1}),
& j\in[N-1],\\[2mm]
-\omega_{N-1}q(y_{N-1})p'(y_N)
-\omega_N y_N^-,
& j=N.
\end{cases}
\end{equation}
Since $q,p,q'$, and $p'$ are nonnegative,
$\nabla_{y_j}H(\y)\leq0$ for every $j\in[N]$. By the definition of
$\|\cdot\|_{*,\bm{\omega}}$,
\begin{align}
    \norm{\nabla H(\y)}_{*,\bm{\omega}}^2
    &\geq
    \sum_{j=1}^N
    \left[
        \frac{\omega_{j-1}^2}{\omega_j}
        \bigl(q(y_{j-1})p'(y_j)\bigr)^2
        +\omega_j(y_j^-)^2
    \right]\label{eq:y-negative-norm}\\
    &\geq
    \frac1N\sum_{j=1}^N
    \left[
        \bigl(q(y_{j-1})p'(y_j)\bigr)^2+(y_j^-)^2
    \right].
    \label{eq:dual-chain-gradient-energy}
\end{align}

We first establish a bound on the activation deficit $1-q(y_k)$ for each $k\in[N]$. If $q(y_k)<1$,
we choose the smallest $j\leq k$ such that
$1-q(y_j)\geq\frac12(1-q(y_k))$, which leads to $q(y_{j-1})>1-\frac12(1-q(y_k))\geq\frac12$. Then, we have 
\begin{equation}
    \label{eq:est-1}
    1-q(y_k)
    \leq2\bigl(1-q(y_j)\bigr)
    \leq4\left((p'(y_j))^2+(y_j^-)^2\right)
    \leq16N\norm{\nabla H(\y)}_{*,\bm{\omega}}^2,
\end{equation}
where the second inequality follows from
Lemma~\ref{lem:ncpl-gate-bounds}, and the last follows from
\eqref{eq:dual-chain-gradient-energy} and
$q(y_{j-1})\ge \tfrac12$.
If $q(y_k)=1$, the same bound holds trivially.

The preceding estimate also controls the gate term in each local interaction. For every $j\in[N]$,
\[
\begin{aligned}
    1-q(y_{j-1})p(y_j)
    &=
    1-q(y_{j-1})
    +q(y_{j-1})\bigl(1-p(y_j)\bigr)\\
    &\leq
    1-q(y_{j-1})+1-q(y_j)\\
    &\leq
    32N\norm{\nabla H(\y)}_{*,\bm{\omega}}^2,
\end{aligned}
\]
where the first inequality follows from
Lemma~\ref{lem:ncpl-gate-bounds}, and the second follows from \eqref{eq:est-1}.
Multiplying by $\omega_{j-1}$ and summing over $j$ gives
\[
\begin{aligned}
    \sum_{j=1}^N\omega_{j-1}
    \bigl(1-q(y_{j-1})p(y_j)\bigr)
    &\leq
    32N\norm{\nabla H(\y)}_{*,\bm{\omega}}^2
    \sum_{j=1}^N\omega_{j-1}<
    96N\norm{\nabla H(\y)}_{*,\bm{\omega}}^2,
\end{aligned}
\]
where the last inequality follows from
Lemma~\ref{lem:ncpl-weight-estimates}.

Moreover, \eqref{eq:y-negative-norm}  gives
$\frac12\norm{\y^-}_{\bm{\omega}}^2
\leq\frac12\norm{\nabla H(\y)}_{*,\bm{\omega}}^2$. 
Combining the preceding bounds yields
\[
    H(\y)+10\bigl(1-q(y_N)\bigr)
    <\left(256N+\frac12\right)
    \norm{\nabla H(\y)}_{*,\bm{\omega}}^2
    \leq320N\norm{\nabla H(\y)}_{*,\bm{\omega}}^2,
\]
where the last inequality uses $N\ge 2$.
\end{proof}

Proposition~\ref{lem:ncpl-inner-technical} therefore provides
a length-$N$ sequential dual chain with a weighted P\L{} modulus of order
$1/N$, together with control of its terminal activation deficit. We next
couple this chain to the outer primal variables. 


\subsection{A Terminal-Gated Primal-Dual Coupling}
\label{sec:inner}
The function $-H$ provides the required dual P\L{} geometry, but its maximum is fixed at zero. To couple the dual chain with the outer primal construction, 
let $\alpha\in[0,10]$ be a scalar parameter and define
\begin{equation*}G(\alpha;\y):=\alpha q(y_N)-H(\y).\end{equation*} 
Later, $\alpha$ will be instantiated by a scalar function $a(\x)$ of the primal variables. In that case,
\[\nabla_{\x}G(a(\x);\y)=q(y_N)\nabla a(\x),\]
so the corresponding primal gradient remains hidden whenever $y_N\leq0$.
The following lemma shows that $G(\alpha;\cdot)$ has maximum value
$\alpha$ and satisfies a weighted dual P\L{} inequality with modulus
$1/(640N)$.


\begin{lemma}
\label{lem:ncpl-inner-block}
For every $\alpha\in[0,10]$, we have 
\begin{equation}
\max_{\widetilde{\y}\in\R^N}G(\alpha;\widetilde{\y})=\alpha.
\label{eq:normalized-inner-value}
\end{equation}
Moreover, for every $\y\in\R^N$,
\begin{equation}
\frac12\norm{\nabla_{\y}G(\alpha;\y)}_{*,\bm{\omega}}^2
\geq
\frac{1}{640N}
\left(
\max_{\widetilde{\y}\in\R^N}G(\alpha;\widetilde{\y})
-G(\alpha;\y)
\right).
\label{eq:normalized-inner-pl}
\end{equation}
\end{lemma}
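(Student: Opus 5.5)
First we establish the maximum value. Since $q\le1$, we have $G(\alpha;\y)\le\alpha-H(\y)$. The claim then reduces to $H\ge0$. This holds because each $d_j$ is a sum of $\omega_{j-1}(1-q(u)p(v))\ge0$ (by Lemma~\ref{lem:ncpl-gate-bounds}, $q,p\in[0,1]$) and $\frac{\omega_j}{2}(v^-)^2\ge0$. Hence $G(\alpha;\y)\le\alpha$ for every $\y$. Equality is attained at $\y=\mathbf 1$: there $q(1)=p(1)=1$ and all negative parts vanish, so $H(\mathbf 1)=0$ and $q(y_N)=1$. This proves \eqref{eq:normalized-inner-value}.

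Next we prove the P\L{} inequality \eqref{eq:normalized-inner-pl}. The gap is
\[
\alpha-G(\alpha;\y)=\alpha\bigl(1-q(y_N)\bigr)+H(\y)\le H(\y)+10\bigl(1-q(y_N)\bigr),
\]
using $\alpha\le10$. Proposition~\ref{lem:ncpl-inner-technical} bounds the right-hand side by $320N\norm{\nabla H(\y)}_{*,\bm\omega}^2$. It therefore suffices to show
\[
\norm{\nabla H(\y)}_{*,\bm\omega}\le\norm{\nabla_{\y}G(\alpha;\y)}_{*,\bm\omega}.
\]
Granting this, the gap is at most $320N\norm{\nabla_{\y}G}_{*,\bm\omega}^2$. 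Dividing by $640N$ gives the stated constant $\frac{1}{640N}$, with the $\frac12$ on the left absorbing the factor.

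The comparison of gradients is the one real step, and it comes from sign information. We have $\nabla_{\y}G=\alpha q'(y_N)\mathbf e_N-\nabla H(\y)$. Proposition~\ref{lem:ncpl-inner-technical} gives $\nabla_{y_j}H\le0$ for every $j$, so $-\nabla_{y_j}H\ge0$. The extra term $\alpha q'(y_N)$ is also nonnegative, because $\alpha\ge0$ and $q'\ge0$. Thus $\nabla_{y_N}G$ is a sum of two nonnegative numbers, and $|\nabla_{y_N}G|\ge|\nabla_{y_N}H|$. The other coordinates coincide up to sign. The weighted dual norm is monotone in the absolute values of the coordinates, so the inequality follows. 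The only thing to watch is that the added term cannot cancel the $H$-gradient, and the sign statement in Proposition~\ref{lem:ncpl-inner-technical} rules this out.
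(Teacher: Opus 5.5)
Your proposal is correct and follows the paper's proof essentially step for step. You bound the value by $\alpha$ using $q\le1$ and $H\ge0$ with equality at $\y=\mathbf 1$, then control the gap through Proposition~\ref{lem:ncpl-inner-technical} together with $\alpha\le10$. You finish with the same sign argument: because $\alpha q'(y_N)\ge0$ and $\nabla_{y_N}H\le0$, you get $\norm{\nabla H(\y)}_{*,\bm{\omega}}\le\norm{\nabla_{\y}G(\alpha;\y)}_{*,\bm{\omega}}$.
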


\begin{proof}[Proof of Lemma~\ref{lem:ncpl-inner-block}]
Since $0\leq q\leq1$, $H\geq0$, and $H(\mathbf{1})=0$, we have
$G(\alpha;\y)\leq\alpha$, with equality at
$\y=\mathbf{1}$. This proves \eqref{eq:normalized-inner-value}.

By Proposition~\ref{lem:ncpl-inner-technical} and $0\leq\alpha\leq10$,
\[
\alpha-G(\alpha;\y)
=
H(\y)+\alpha\bigl(1-q(y_N)\bigr)
\leq
320N\norm{\nabla H(\y)}_{*,\bm{\omega}}^2.
\]
Furthermore, $\nabla_{y_j}G=-\nabla_{y_j}H$ for $j<N$, while
$
\nabla_{y_N}G
=
\alpha q'(y_N)-\nabla_{y_N}H.
$
Since $\alpha q'(y_N)\geq0$ and $\nabla_{y_N}H\leq0$, it follows that
\[
\norm{\nabla H(\y)}_{*,\bm{\omega}}
\leq
\norm{\nabla_{\y}G(\alpha;\y)}_{*,\bm{\omega}}.
\]
Combining the preceding inequalities proves
\eqref{eq:normalized-inner-pl}.
\end{proof}


The function $G$ has the required primal-dual coupling and
weighted dual P\L{} properties, but it is expressed in a weighted
geometry and at a normalized scale. We next use a diagonal change of
coordinates to recover the Euclidean geometry and a perspective
transformation to match the scale of the outer construction.
Define
$
\bW:=
\operatorname{diag}
(\sqrt{\omega_1},\ldots,\sqrt{\omega_N}).
$ 
For $\eta\geq0$, $0\leq\alpha\leq10\eta^2$, and $\y\in\R^N$, define
\begin{equation*}
B(\eta,\alpha;\y)
:=
\begin{cases}
\displaystyle
\eta^2G
\left(\frac{\alpha}{\eta^2};
\frac{\bW^{-1}\y}{\eta}\right),
& \eta>0,\\[3mm]
    -\frac12\norm{\y^-}^2,
& \eta=0.
\end{cases}
\end{equation*}
For $\eta>0$, the dependence of $B$ on the scalar target is given by
\begin{equation}
\nabla_\alpha B(\eta,\alpha;\y)
=
q\left(\frac{y_N}{\eta}\right),
\label{eq:scaled-inner-interface}
\end{equation}
where we used $\omega_N=1$. Thus, the target remains hidden from the
primal gradient while the terminal dual coordinate is zero.
The following lemma shows that the coordinate normalization and perspective scaling preserve the exact maximization identity and the dual P\L{} modulus, with the latter now measured in the Euclidean norm.

\begin{lemma}
\label{lem:ncpl-scaled-inner-block}
For every $\eta\geq0$, $\alpha\in[0,10\eta^2]$, and $\y\in\R^N$,
\begin{equation*}
\max_{\widetilde{\y}\in\R^N}B(\eta,\alpha;\widetilde{\y})=\alpha.
\end{equation*}
Moreover,
\begin{equation}
\frac12\norm{\nabla_{\y}B(\eta,\alpha;\y)}^2
\geq
\frac{1}{640N}
\left(
\max_{\widetilde{\y}\in\R^N}B(\eta,\alpha;\widetilde{\y})
-B(\eta,\alpha;\y)
\right).
\label{eq:scaled-inner-pl}
\end{equation}
\end{lemma}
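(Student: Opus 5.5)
The plan is to split into the cases $\eta=0$ and $\eta>0$. The case $\eta>0$ reduces to Lemma~\ref{lem:ncpl-inner-block} through the change of variables $\z:=\bW^{-1}\y/\eta$. The key observation is that the diagonal rescaling by $\bW^{-1}$ converts the weighted dual norm $\norm{\cdot}_{*,\bm{\omega}}$ into the Euclidean norm. In addition, the factor $\eta^2$ in front of $G$ exactly compensates the $1/\eta$ chain-rule factors, so no $\eta$-dependence survives in the P\L{} constant.

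\emph{Case $\eta=0$.} Here $\alpha\in[0,0]$ forces $\alpha=0$, and $B(0,0;\y)=-\frac12\norm{\y^-}^2\le 0$, with equality at $\y=\bz$. Hence the maximum equals $0=\alpha$. The gradient is $\nabla_{\y}B=\y^-$ componentwise, since $\frac{d}{dt}\bigl(-\frac12(t^-)^2\bigr)=t^-$. Therefore $\frac12\norm{\nabla_{\y}B}^2=\frac12\norm{\y^-}^2=\max B-B$. This is even stronger than \eqref{eq:scaled-inner-pl}, because $1\ge 1/(640N)$.

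\emph{Case $\eta>0$.} Set $\bar\alpha:=\alpha/\eta^2\in[0,10]$, so that Lemma~\ref{lem:ncpl-inner-block} applies to $G(\bar\alpha;\cdot)$. The map $\y\mapsto\z=\bW^{-1}\y/\eta$ is a linear bijection of $\R^N$. It follows that $\max_{\y}B(\eta,\alpha;\y)=\eta^2\max_{\z}G(\bar\alpha;\z)=\eta^2\bar\alpha=\alpha$ by \eqref{eq:normalized-inner-value}, which proves the first claim. For the gradient, the chain rule gives
\[
\nabla_{\y}B(\eta,\alpha;\y)=\eta^2\cdot\frac1\eta\,\bW^{-1}\nabla_{\z}G(\bar\alpha;\z)=\eta\,\bW^{-1}\nabla_{\z}G(\bar\alpha;\z).
\]
Its Euclidean norm then satisfies
\[
\norm{\nabla_{\y}B}^2=\eta^2\sum_{j=1}^N\frac{[\nabla_{\z}G]_j^2}{\omega_j}=\eta^2\norm{\nabla_{\z}G(\bar\alpha;\z)}_{*,\bm{\omega}}^2.
\]
Applying \eqref{eq:normalized-inner-pl}, we get
\[
\frac12\norm{\nabla_{\y}B}^2\ge\frac{\eta^2}{640N}\bigl(\bar\alpha-G(\bar\alpha;\z)\bigr)=\frac{1}{640N}\bigl(\alpha-B(\eta,\alpha;\y)\bigr),
\]
which is \eqref{eq:scaled-inner-pl} once we use the maximization identity just proved.

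There is no serious obstacle; the argument is bookkeeping. The only points needing care are the following. First, the weights enter as $1/\omega_j$ in the squared Euclidean norm, which is exactly what makes it match $\norm{\cdot}_{*,\bm{\omega}}$; this holds because $\bW^{-1}$ has diagonal entries $1/\sqrt{\omega_j}$. Second, one must check the admissibility condition $\bar\alpha\le 10$, which is guaranteed by the hypothesis $\alpha\le10\eta^2$. Third, the degenerate case $\eta=0$ has to be handled separately, since the perspective formula is undefined there.
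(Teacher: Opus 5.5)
Your proposal is correct and follows the paper's proof essentially step for step. For $\eta>0$, both arguments change variables to $\bW^{-1}\y/\eta$, observe that $\norm{\nabla_{\y}B}^2=\eta^2\norm{\nabla G}_{*,\bm{\omega}}^2$ and that the dual gap also scales by $\eta^2$, and then invoke Lemma~\ref{lem:ncpl-inner-block}; the degenerate case $\eta=0$ is likewise handled directly via $B(0,0;\y)=-\frac12\norm{\y^-}^2$.
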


\begin{proof}[Proof of Lemma~\ref{lem:ncpl-scaled-inner-block}]
Suppose first that $\eta>0$, and set
$
\widetilde\alpha:=\frac{\alpha}{\eta^2},~
\widetilde{\y}:=\frac{\bW^{-1}\y}{\eta}.
$
Lemma~\ref{lem:ncpl-inner-block} gives
\[
\max_{\boldsymbol v}B(\eta,\alpha;\boldsymbol v)
=
\eta^2\max_{\boldsymbol u}G(\widetilde\alpha;\boldsymbol u)
=
\alpha.
\]
Moreover,
\[
\nabla_{\y}B(\eta,\alpha;\y)
=
\eta\bW^{-1}\nabla_{\widetilde{\y}}G(\widetilde\alpha;\widetilde{\y}),
\]
and hence
\[
\norm{\nabla_{\y}B(\eta,\alpha;\y)}^2
=
\eta^2\norm{\nabla_{\widetilde{\y}}G(\widetilde\alpha;\widetilde{\y})}_{*,\bm{\omega}}^2.
\]
The dual gap and squared gradient norm are both multiplied by
$\eta^2$. Hence, \eqref{eq:scaled-inner-pl} follows from
Lemma~\ref{lem:ncpl-inner-block}.

If $\eta=0$, then $\alpha=0$ and
$B(0,0;\y)=-\frac12\norm{\y^-}^2$. Thus its maximum is zero, and
\[
\max_{\widetilde{\y}}B(0,0;\widetilde{\y})-B(0,0;\y)
=
\frac12\norm{\y^-}^2,
\qquad\text{and}\qquad
\norm{\nabla_{\y}B(0,0;\y)}^2
=
\norm{\y^-}^2,
\]
which also proves \eqref{eq:scaled-inner-pl}.
\end{proof}
\subsection{The Composite NC-P\L{} Hard Instance}
\label{sec:outer}
We now embed the scaled dual block \(B\) constructed in
Section~\ref{sec:inner} into each link of the nonconvex zero-chain of
\citet{carmon2019lower}. Maximizing over the dual variables recovers this
zero-chain exactly as the value function, while each embedded block
inserts \(N\) dual coordinates between two consecutive primal coordinates.



We first recall the scalar functions used in this zero-chain
construction. Define
$$
\psi(t):=
\begin{cases}
0, & t\leq\frac12,\\
\displaystyle
\exp\left(1-\frac{1}{(2t-1)^2}\right), & t>\frac12,
\end{cases}
\quad \text{and} \quad
\varphi(t):=\sqrt e\int_{-\infty}^t e^{-\frac{s^2}{2}}\,\diff s.
$$


We use the following properties of $\psi$ and $\varphi$,
established by \citet[Lemma~1]{carmon2019lower}.
\begin{fact}
\label{lem:outer-activation-functions}
The functions $\psi$ and $\varphi$ satisfy the following properties.
\begin{enumerate}
\item[(i)]
The functions $\psi$ and $\varphi$ are infinitely differentiable.
\item[(ii)]
For every integer $k\geq0$ and every $t\leq\frac12$,
\(
\psi^{(k)}(t)=0.
\)
\item[(iii)]
For every $s\geq1$ and $|t|<1$,
\(
\psi(s)\varphi'(t)>1.
\)
\item[(iv)]
The functions $\psi$, $\psi'$, $\varphi$, and $\varphi'$ are nonnegative and
bounded. More precisely,
\[
0\leq\psi(t)<e,\quad
0\leq\psi'(t)\leq\sqrt{\frac{54}{e}},\quad
0<\varphi(t)<\sqrt{2\pi e},
\quad\mbox{and}\quad
0<\varphi'(t)\leq\sqrt e.
\]
\end{enumerate}
\end{fact}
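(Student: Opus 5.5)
Since the statement is cited from \citet[Lemma~1]{carmon2019lower}, I would re-verify each item directly from the closed forms of $\psi$ and $\varphi$, using elementary calculus throughout.

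For (i) and (ii), I would note that $\varphi'(t)=\sqrt e\,e^{-t^2/2}$ is smooth, so $\varphi$ is $C^\infty$. For $\psi$, set $u=(2t-1)^{-2}$. For $t>\tfrac12$, each derivative $\psi^{(k)}(t)$ has the form $P_k\bigl((2t-1)^{-1}\bigr)\exp(1-(2t-1)^{-2})$ with $P_k$ a polynomial; this follows by induction on $k$. The exponential factor decays faster than any power of $(2t-1)^{-1}$ as $t\downarrow\tfrac12$, so every one-sided derivative at $\tfrac12$ vanishes. The left side is identically zero, so $\psi$ is $C^\infty$ and $\psi^{(k)}(t)=0$ for $t\le\tfrac12$.

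For (iii), the function $\psi$ is nondecreasing on $(\tfrac12,\infty)$, so for $s\ge1$ we get $\psi(s)\ge\psi(1)=1$. For $|t|<1$, we have $\varphi'(t)=\sqrt e\,e^{-t^2/2}>\sqrt e\,e^{-1/2}=1$. Multiplying the two bounds gives $\psi(s)\varphi'(t)>1$.

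For (iv), nonnegativity and positivity are immediate from the formulas. For $\psi$: since $u>0$ on $t>\tfrac12$, we get $\psi<e^{1}$. For $\varphi$: since $\varphi'>0$, we have $\varphi(t)<\sqrt e\int_{\R}e^{-s^2/2}\,\diff s=\sqrt{2\pi e}$. For $\varphi'$: we have $\varphi'(t)\le\sqrt e$, with equality at $t=0$.

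The one real computation, and the main obstacle, is the bound on $\psi'$. Writing $r=2t-1>0$, we get $\psi'(t)=4r^{-3}e^{1-r^{-2}}$. Substituting $v=r^{-1}$, I would maximize $h(v)=4v^3e^{1-v^2}$ over $v>0$. Setting the derivative to zero gives $3v^2=2v^4$, i.e.\ $v^2=3/2$. The maximum value is then
\[
4\,(3/2)^{3/2}e^{-1/2}=\sqrt{16\cdot 27/8}\,e^{-1/2}=\sqrt{54/e}.
\]
This confirms $0\le\psi'\le\sqrt{54/e}$ and completes the verification.
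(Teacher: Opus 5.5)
Your proof is correct. The paper gives no argument of its own and simply imports this fact from \citet[Lemma~1]{carmon2019lower}. Your direct calculus check is essentially that verification: flatness of $\psi$ at $\tfrac12$, then $\psi(s)\ge\psi(1)=1$ combined with $\varphi'(t)>\sqrt e\,e^{-1/2}=1$, then maximizing $4v^3e^{1-v^2}$ at $v^2=3/2$ to obtain $\sqrt{54/e}$.
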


Using these scalar functions, define the local interaction
\begin{equation*}
h(s,t)
:=
\psi(-s)\varphi(-t)-\psi(s)\varphi(t).
\end{equation*}
With \(x_0:=1\), summing \(h(x_{i-1},x_i)\) over \(i\in[M]\)
gives the length-\(M\) nonconvex zero-chain of
\citet{carmon2019lower}. We will represent each interaction
\(h(x_{i-1},x_i)\) as the maximum of a primal-dual block over an
associated \(N\)-dimensional dual variable.

To construct this block from $B$, define
\[
\rho(s):=\sqrt{\psi(s)+\psi(-s)}
\qquad\text{and}\qquad
\widetilde h(s,t):=5\rho(s)^2+h(s,t).
\]
The following two lemmas establish the regularity of $\rho$ and
verify that $(\eta,\alpha)=(\rho(s),\widetilde h(s,t))$ lies in the
admissible parameter range of $B$.




\begin{lemma}
\label{lem:ncpl-outer-scale-bounds}
The function $\rho$ is nonnegative and smooth on $\R$.
Moreover, $\rho$ and $\rho'$ are bounded by numerical constants,
$\rho'$ is Lipschitz, and
\[
\rho(s)=\rho'(s)=0
\qquad\text{whenever}\qquad
|s|\leq\frac12.
\]
\end{lemma}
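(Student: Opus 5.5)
The plan is to reduce $\rho$ to a one-sided function of the same form as $\psi$, so that the smoothness argument already behind Fact~\ref{lem:outer-activation-functions}(i)--(ii) applies to it directly. By Fact~\ref{lem:outer-activation-functions}(ii), $\psi(t)=0$ for $t\le\frac12$. Hence at most one of $\psi(s)$ and $\psi(-s)$ is nonzero for any given $s$, and both vanish when $|s|\le\frac12$.

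Define
\[
\chi(t):=\sqrt{\psi(t)}=
\begin{cases}
0, & t\le\frac12,\\
\exp\!\left(\frac12-\frac{1}{2(2t-1)^2}\right), & t>\frac12 .
\end{cases}
\]
Because the supports of $\chi(s)$ and $\chi(-s)$ are disjoint, the square root of the sum equals the sum of the square roots, so
\[
\rho(s)=\chi(s)+\chi(-s)\qquad\text{for all } s\in\R.
\]
It therefore suffices to prove the claims for $\chi$. Nonnegativity of $\rho$ is immediate. The vanishing of $\rho$ and $\rho'$ on $|s|\le\frac12$ follows once $\chi$ is shown to be $C^1$ with $\chi\equiv0$ on $(-\infty,\frac12]$.

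For smoothness of $\chi$, substitute $u=2t-1>0$. Then $\chi=\exp\bigl(\frac12-\frac{1}{2u^2}\bigr)$, which has exactly the structure of $\psi$ with the constant in the exponent halved. By induction, every derivative on $t>\frac12$ has the form $P_k(1/u)\exp\bigl(-\frac{1}{2u^2}\bigr)$ for some polynomial $P_k$. Each such expression tends to $0$ as $u\to0^+$, since the exponential factor beats any power of $1/u$. So all one-sided derivatives at $t=\frac12$ vanish and match the zero function on the left, and $\chi\in C^\infty(\R)$. This boundary argument is the only genuinely delicate step; it is the same standard argument that gives Fact~\ref{lem:outer-activation-functions}(i) for $\psi$, and I would cite or reproduce it.

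For the bounds, the first two are explicit:
\[
0\le\chi<\sqrt e,\qquad \chi'(t)=\frac{2}{u^3}\exp\!\left(\frac12-\frac{1}{2u^2}\right)\ \text{ for } t>\tfrac12 .
\]
The function $\chi'$ is continuous on $(0,\infty)$ as a function of $u$, tends to $0$ as $u\to0^+$, and tends to $0$ as $u\to\infty$. It is therefore bounded by a numerical constant, which can be made explicit by maximizing $u^{-3}e^{-1/(2u^2)}$ (the maximum is at $u^2=1/3$).

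For $\chi''$, I would differentiate once more, giving
\[
\chi''=\left(-\frac{12}{u^4}+\frac{4}{u^6}\right)\exp\!\left(\frac12-\frac{1}{2u^2}\right).
\]
By the same reasoning, this vanishes in both limits $u\to0^+$ and $u\to\infty$, so $\chi''$ is bounded. Hence $\chi'$ is Lipschitz with a numerical constant. Adding the reflected copy $\chi(-s)$ transfers every bound to $\rho$ and $\rho'$, with the constants at most doubled, since the two copies have disjoint supports.
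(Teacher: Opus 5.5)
Your proof is correct and follows essentially the same route as the paper, which also writes $\rho(s)=\theta(s)+\theta(-s)$ with $\theta=\sqrt{\psi}$ the same halved-exponent cutoff. The paper then derives smoothness, the bounds on $\rho$ and $\rho'$, the Lipschitz property of $\rho'$, and the vanishing on $|s|\le\frac12$ from flatness at $s=\frac12$ and bounded first two derivatives; your explicit formulas for $\chi'$ and $\chi''$ just make its ``direct differentiation'' step concrete.
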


\begin{lemma}
\label{lem:ncpl-lifted-target-bounds}
For every $t\in\R$,
\[
0<5-\varphi(t)<10,
\qquad
0<5+\varphi(-t)<10,
\quad \text{and} \quad 
|\varphi'(t)|+\Lip(\varphi')<3.
\]
Moreover, we have 
\begin{equation}
\widetilde h(s,t)
=
\begin{cases}
\rho(s)^2\bigl(5-\varphi(t)\bigr), & s>\frac12,\\
0, & |s|\leq\frac12,\\
\rho(s)^2\bigl(5+\varphi(-t)\bigr), & s<-\frac12.
\end{cases}
\label{eq:lifted-target-active-form}
\end{equation}
\end{lemma}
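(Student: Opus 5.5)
The three bounds on $\varphi$ come first, directly from Fact~\ref{lem:outer-activation-functions}(iv). Since $0<\varphi(t)<\sqrt{2\pi e}$ for every $t$, and $\sqrt{2\pi e}=\sqrt{17.07\ldots}<4.14<5$, we obtain $1<5-\varphi(t)<5$ and $5<5+\varphi(-t)<5+\sqrt{2\pi e}<10$. Both of these give the stated ranges. For the derivative, $\varphi'(t)=\sqrt e\,e^{-t^2/2}$ lies in $(0,\sqrt e]$. Next, $\varphi''(t)=-t\sqrt e\,e^{-t^2/2}$. The function $|t|e^{-t^2/2}$ is maximized at $|t|=1$, so $|\varphi''(t)|\le \sqrt e\,e^{-1/2}=1$. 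Hence $\Lip(\varphi')\le1$, and $|\varphi'(t)|+\Lip(\varphi')\le\sqrt e+1<2.65<3$.

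For the piecewise formula \eqref{eq:lifted-target-active-form}, I split into three cases according to the sign and size of $s$, using Fact~\ref{lem:outer-activation-functions}(ii) that $\psi(u)=0$ for $u\le\frac12$.

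\emph{Case $|s|\le\frac12$.} Here both $s\le\frac12$ and $-s\le\frac12$, so $\psi(s)=\psi(-s)=0$. Therefore $\rho(s)=0$ and $h(s,t)=0$, which gives $\widetilde h(s,t)=0$.

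\emph{Case $s>\frac12$.} Here $-s<-\frac12\le\frac12$, so $\psi(-s)=0$. Then $\rho(s)^2=\psi(s)$ and $h(s,t)=-\psi(s)\varphi(t)$. Consequently $\widetilde h(s,t)=5\psi(s)-\psi(s)\varphi(t)=\rho(s)^2(5-\varphi(t))$.

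\emph{Case $s<-\frac12$.} Symmetrically, $\psi(s)=0$, so $\rho(s)^2=\psi(-s)$ and $h(s,t)=\psi(-s)\varphi(-t)$. This yields $\widetilde h(s,t)=\rho(s)^2(5+\varphi(-t))$.

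There is no genuine obstacle. The only point requiring care is the numerical check $\sqrt{2\pi e}<5$, together with the bound $\sup_t|t|e^{-t^2/2}=e^{-1/2}$ used for $\Lip(\varphi')$.
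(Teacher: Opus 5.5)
Your proof is correct and follows the paper's argument. The ranges come from Fact~\ref{lem:outer-activation-functions}(iv), the derivative bound from $\sup_t|\varphi'(t)|=\sqrt e$ and $\sup_t|\varphi''(t)|=1$, and the piecewise formula from the disjoint supports of $\psi(s)$ and $\psi(-s)$; the paper writes $\widetilde h(s,t)=\psi(s)\bigl(5-\varphi(t)\bigr)+\psi(-s)\bigl(5+\varphi(-t)\bigr)$ in one line instead of splitting into cases.

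One small slip: since $\sqrt{2\pi e}\approx 4.13$, you only get $5-\varphi(t)>5-\sqrt{2\pi e}\approx 0.87$, not $5-\varphi(t)>1$. The bound $>1$ fails for large $t$, but positivity is all the statement requires.
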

The proofs of Lemmas~\ref{lem:ncpl-outer-scale-bounds}
and~\ref{lem:ncpl-lifted-target-bounds} are given in
Appendices~\ref{app:proof-ncpl-outer-scale-bounds}
and~\ref{app:proof-ncpl-lifted-target-bounds}, respectively.

We can now realize the interaction $h$ through the block $B$. For
$s,t\in\R$ and $\y\in\R^N$, define the embedded
primal-dual block
\begin{equation*}
C(s,t;\y)
:=
-5\rho(s)^2
+
B\bigl(\rho(s),\widetilde h(s,t);\y\bigr).
\end{equation*}
By Lemma~\ref{lem:ncpl-lifted-target-bounds}, the arguments supplied to
$B$ are admissible, i.e., $
0\leq\widetilde h(s,t)\leq10\rho(s)^2
$. Therefore, Lemma~\ref{lem:ncpl-scaled-inner-block} gives
\begin{equation}
\max_{\y\in\R^N}C(s,t;\y)
=
-5\rho(s)^2+\widetilde h(s,t)
=
h(s,t).
\label{eq:embedded-outer-component}
\end{equation}
Thus, maximizing over the dual variables recovers the local interaction
$h(s,t)$ exactly.

The derivative of the embedded block with respect to the successor primal
variable is controlled by the terminal dual coordinate. If $\rho(s)>0$,
then the chain rule, together with
$\nabla_t\widetilde h(s,t)=\nabla_t h(s,t)$, gives
\begin{equation}
\nabla_t C(s,t;\y)
=
q\left(\frac{y_N}{\rho(s)}\right)
\nabla_t h(s,t).
\label{eq:embedded-successor-interface}
\end{equation}
Thus, $\nabla_tC(s,t;\y)=0$ whenever $y_N=0$. If $\rho(s)=0$, then
\[
C(s,t;\y)
=
B(0,0;\y)
=
-\frac12\norm{\y^-}^2,
\]
which is independent of $t$. Therefore, the block produces no gradient in
the successor primal variable while its terminal dual coordinate remains
zero.

We now place one copy of the block \(C\) on each of the \(M\) links of
the outer zero-chain. For every \(i\in[M]\), introduce a dual block  
$
\y^{(i)}
=
\bigl(y^{(i)}_1,\ldots,y^{(i)}_N\bigr)
\in\R^N,
$ 
and write
$
\y
=
\bigl(\y^{(1)},\ldots,\y^{(M)}\bigr)
\in\R^{MN}.
$
Setting \(x_0:=1\), define the unscaled saddle function
\begin{equation*}
\bar f(\x;\y)
:=
\sum_{i=1}^M
C\bigl(x_{i-1},x_i;\y^{(i)}\bigr).
\end{equation*}
Because the dual blocks are independent, maximization over \(\y\)
separates across \(i\). Consequently,
\eqref{eq:embedded-outer-component} gives
\[
\bar\Phi(\x)
:=
\max_{\y\in\R^{MN}}\bar f(\x;\y)
=
\sum_{i=1}^M h(x_{i-1},x_i).
\]
Thus, dual maximization preserves the nonconvex zero-chain of
\citet{carmon2019lower} as the value function, whereas the terminal gates
modify its first-order information structure: each transition from
$x_{i-1}$ to $x_i$ requires the sequential discovery of the $N$
coordinates in $\y^{(i)}$.

Define $\ord=(\pi_1,\ldots,\pi_{M(N+1)})$ by
\begin{equation}
\pi_{(i-1)(N+1)+j}
:=
\begin{cases}
M+(i-1)N+j,
    & j\in[N],\\
i,
    & j=N+1,
\end{cases}
\qquad i\in[M].
\label{eq:joint-coordinate-order}
\end{equation}
Equivalently, $\ord$ orders the coordinates as
\[
\bigl(
y^{(1)}_1,\ldots,y^{(1)}_N,x_1,
\ldots,
y^{(M)}_1,\ldots,y^{(M)}_N,x_M
\bigr).
\]
The sequential structure of each dual chain prevents
$y^{(i)}_j$ from being revealed before $y^{(i)}_{j-1}$. Moreover,
\eqref{eq:embedded-successor-interface} prevents $x_i$ from being revealed
before $y^{(i)}_N$, while the flatness of the outer scale keeps the next
block inactive until $x_i$ is revealed. The resulting discovery order is
therefore
\[
x_{i-1}
\longrightarrow
y^{(i)}_1
\longrightarrow\cdots\longrightarrow
y^{(i)}_N
\longrightarrow
x_i.
\]
In particular, $x_M$ is the final coordinate under $\ord$, and hence
$
\pi_{M(N+1)}=M.
$ 

We next verify that $(\bar f,\ord)$ satisfies the four hard-instance
certificates in Condition~\ref{cond:normalized-hard-instance}.

\section{Verification of the Hard Instance Properties}
\label{sec:verification}

We now verify the four certificates in
Condition~\ref{cond:normalized-hard-instance} and then scale the resulting
instance to prove the main theorems. 

\subsection{The Function Class Certificate}
\label{sec:verification-membership}
We verify
Condition~\ref{cond:normalized-hard-instance}~\textup{(C1)} by
establishing uniform joint smoothness of $\bar f$ and a dual P\L{} modulus
of order $1/N$. The only nonroutine smoothness issue is the extension of
the perspective block through the degenerate regime $\rho(s)=0$; the
following lemma records the required result.
\begin{lemma}[Uniform smoothness of the embedded block]
\label{lem:ncpl-joint-smoothness}
There exists a numerical constant \(L_C>0\), independent of \(N\), such
that \(C\) is \(L_C\)-smooth on \(\R^{N+2}\).
\end{lemma}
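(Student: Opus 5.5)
The plan is to write $C$ in closed form on $\{\rho(s)>0\}$ as a sum of \emph{perspective-type} terms, bound their Hessians uniformly in $N$ there, and then extend $C^1$-regularity and the Lipschitz bound for $\nabla C$ across the degenerate set $\{\rho=0\}=\{|s|\le\frac12\}$.

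\textbf{Step 1 (explicit form).} For $\rho:=\rho(s)>0$, unwind the definitions of $B$, $G$, $H$ and use \eqref{eq:lifted-target-active-form}, writing $\widetilde h(s,t)=\rho^2 a(s,t)$. Here $a(s,t)=5-\varphi(t)$ for $s>\frac12$ and $a(s,t)=5+\varphi(-t)$ for $s<-\frac12$. Each branch is a smooth function of $t$ alone, taking values in $(0,10)$, with $|\partial_t a|+\Lip(\partial_t a)<3$ by Lemma~\ref{lem:ncpl-lifted-target-bounds}. This gives
\[
C(s,t;\y)=-5\rho^2+\rho^2a(s,t)\,q\Bigl(\tfrac{y_N}{\rho}\Bigr)-\sum_{j=1}^N\omega_{j-1}\rho^2\Bigl(1-q\Bigl(\tfrac{y_{j-1}}{\rho\sqrt{\omega_{j-1}}}\Bigr)p\Bigl(\tfrac{y_j}{\rho\sqrt{\omega_j}}\Bigr)\Bigr)-\tfrac12\norm{\y^-}^2 .
\]
In the $j=1$ term, the $q$-factor is $q(1)=1$, since $y_0=1$ in the normalized coordinates. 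The last term is $1$-smooth, so it can be set aside.

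\textbf{Step 2 (Hessian bound on $\{\rho>0\}$).} First treat $(\rho,\y)$ as independent variables and bound the Hessian of each remaining term. For a single perspective term $\rho^2\phi(z/\rho)$, with $\phi\in\{q,p\}$ or their products, the derivatives are
\[
\partial_z=\rho\phi',\qquad \partial_{zz}=\phi'',\qquad \partial_\rho=2\rho\phi-z\phi',
\]
\[
\partial_{\rho z}=\phi'-u\phi'',\qquad \partial_{\rho\rho}=2\phi-2u\phi'+u^2\phi'',\qquad u=z/\rho .
\]
All of these are bounded because $\phi$, $u\phi'(u)$ and $u^2\phi''(u)$ are bounded: $q'$ and $p'$ vanish outside $[0,1]$ and $[-1,1]$, and Lemma~\ref{lem:ncpl-gate-bounds} bounds $q,p,q',p',q'',p''$ (a.e.). For the two-variable gate $j$, the weights produce the factors $\omega_{j-1}/\omega_{j-1}=1$ on the $y_{j-1}y_{j-1}$ entry, $\omega_{j-1}/\omega_j\le1$ on the $y_jy_j$ entry, $\sqrt{\omega_{j-1}/\omega_j}$ on the cross entry, $\omega_{j-1}$ on the $\rho\rho$ entry, and $\sqrt{\omega_{j-1}}$ or $\omega_{j-1}/\sqrt{\omega_j}$ on the $\rho y$ entries.

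To sum over $j$ without losing a factor of $N$, I would bound the operator norm by the Schur test $\norm{A}\le\max_r\sum_c|A_{rc}|$ for the symmetric Hessian.
\begin{itemize}
\item The $y$-rows involve at most three gates each, so they are $O(1)$.
\item The $\rho$-row sums to $O\bigl(\sum_j\omega_{j-1}+\sum_j\sqrt{\omega_{j-1}}+\sum_j\omega_{j-1}/\sqrt{\omega_j}\bigr)$.
\end{itemize}
The first sum is below $3$ by Lemma~\ref{lem:ncpl-weight-estimates}. For the other two, the recursion \eqref{eq:weight-sequence} makes $\omega_{N-k}$ decay like $N^{-(1-2^{-(k-1)})}$. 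So only $O(1)$ weights near the end are of size $\Theta(1)$, and I expect $\sum_j\sqrt{\omega_{j-1}}=O(1)$ by a direct estimate from \eqref{eq:weight-sequence}. If that is delicate, I would fall back on Cauchy--Schwarz together with $\sum_j\omega_{j-1}^2/\omega_j<2$.

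The target term $\rho^2a\,q(y_N/\rho)$ is handled in the same way, adding $t$-derivatives, which are bounded since $a,\partial_ta,\partial_{tt}a$ are bounded. Finally, compose with $s\mapsto\rho(s)$. By Lemma~\ref{lem:ncpl-outer-scale-bounds}, $\rho$ and $\rho'$ are bounded and $\rho'$ is Lipschitz. The chain rule then adds $\rho'^2\partial_{\rho\rho}$, $\rho''\partial_\rho$ and $\rho'\partial_{\rho y}$ terms. Each is bounded because $\partial_\rho$ itself is bounded, being at most $O(\rho\sum_j\omega_{j-1})$ plus $|z\phi'|\le c\rho$ terms.

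\textbf{Step 3 (the degenerate set; main obstacle).} On $|s|\le\frac12$ we have $C=-\frac12\norm{\y^-}^2$. I must show that $\nabla C$ is continuous as $\rho(s)\downarrow0$, so that $C\in C^1$ with a gradient that is Lipschitz on all of $\R^{N+2}$. The gradient components from Step 2 have the forms $\rho\phi'(u)$, $2\rho\phi-z\phi'(u)$, and $\rho^2\partial_ta\,q$. Since $\phi'(u)\neq0$ forces $|z|\le c\rho$, each of these tends to $0$. Together with $\rho'\to0$, this shows that the gradient of the gate part tends to $0$, which matches the gradient $(0,0,-\y^-)$ on the degenerate region.

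With $\nabla C$ continuous everywhere and the Hessian bounded by $L_C$ (a.e.) on the open set $\{|s|>\frac12\}$ and by $1$ on its complement, I would integrate along segments. Each segment meets the boundary $\{|s|=\frac12\}$ in at most two hyperplane crossings, so $\norm{\nabla C(\z)-\nabla C(\z')}\le L_C\norm{\z-\z'}$.

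The care needed in this final step is to ensure that the a.e.\ second derivatives of $q$ and $p$ (kinks of $q',p'$ at $0,1,\pm1$) do not break the argument. I would handle this by working with Lipschitz gradients directly, using $\Lip(q')\le6$ and $\Lip(p')\le\frac32$, instead of classical Hessians.
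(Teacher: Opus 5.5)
Your overall architecture matches the paper's proof. That includes the explicit form of $C$ on $|s|>\frac12$ as perspective terms $\eta^2Q(\boldsymbol u/\eta)$, and the boundedness of $u\phi'(u)$ and $u^2\phi''(u)$ because the gates vary only on fixed transition intervals. It also includes the chain rule through $\eta=\rho(s)$ and the gluing across $s=\pm\frac12$ by splitting segments.

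\textbf{The gap.} It lies in the one step meant to deliver independence of $N$. You apply the unweighted Schur (row-sum) test to the $\rho$-row of the Hessian. The same applies to the $s$-row after composition, since that row is $\rho'(s)$ times the $\rho$-row. This test needs $\sum_j\sqrt{\omega_{j-1}}$ and $\sum_j\omega_{j-1}/\sqrt{\omega_j}$ to be $O(1)$, but both are of order $\sqrt N$. The weights do not decay to zero; they plateau near $1/N$. Lemma~\ref{lem:ncpl-weight-estimates} gives $\omega_{j-1}\ge 1/N$, so $\sum_j\sqrt{\omega_{j-1}}\ge\sqrt N$. The recursion \eqref{eq:weight-sequence} gives $\omega_{j-1}/\sqrt{\omega_j}=1/\sqrt N$ exactly for every $j\le N-1$.

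This is not just a crude estimate of the entries. Take a point where every normalized coordinate $y_j/(\eta\sqrt{\omega_j})$ equals $\frac12$. There the two gates touching $y_k$ contribute to $\partial_{\eta y_k}$ with the same sign. Its magnitude is at least a constant times $\sqrt{\omega_k}\ge 1/\sqrt N$ for every $k<N$, so the $\ell^1$ norm of that row really is $\Theta(\sqrt N)$. Your Cauchy--Schwarz fallback also only gives $\sqrt{3N}$.

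As written, the argument therefore proves only $L_C=O(\sqrt N)$. That defeats the purpose of the lemma, because an $N$-dependent smoothness constant breaks the choice $N=\Theta(\kappa)$ in Theorem~\ref{thm:zr-lower}.

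\textbf{The fix.} Measure the mixed column in $\ell^2$ rather than $\ell^1$; this is what the paper does. Write $R$ for the sum of the gate terms and $K_1$ for a uniform bound on the second derivatives of a single normalized perspective term. The gate $R_k$ touches $y_k$ with factor $\omega_{k-1}/\sqrt{\omega_k}$, and for $k<N$ the gate $R_{k+1}$ touches it with factor $\sqrt{\omega_k}$. Hence $|\partial_{\eta y_k}R|\le K_1(\omega_{k-1}/\sqrt{\omega_k}+\sqrt{\omega_k})$, and
\[
\norm{\nabla_{\y\eta}R}^2\le 2K_1^2\Bigl(\sum_{k=1}^N\frac{\omega_{k-1}^2}{\omega_k}+\sum_{k=1}^{N-1}\omega_k\Bigr)<10K_1^2
\]
by \eqref{eq:ncpl-weight-sums}. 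Use the row-sum test only on the tridiagonal $\y\y$ block, where your weight ratios are at most $1$. Then combine the pieces through the block bound $\norm{\nabla^2R}\le\max\{|\nabla_{\eta\eta}R|,\norm{\nabla^2_{\y\y}R}\}+\norm{\nabla_{\y\eta}R}$. The $s$-row after composition with $\rho$ is handled the same way.

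A minor slip: on $|s|\le\frac12$ the gradient is $(0,0,\y^-)$, not $(0,0,-\y^-)$.
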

The proof is given in
Appendix~\ref{app:proof-ncpl-joint-smoothness}. 



\begin{proposition}[Smoothness and dual P\L{} certificate]
\label{prop:ncpl-function-class}
For every $\x\in\R^M$, the maximum defining $\bar\Phi(\x)$
is finite and attained, and
\[
\bar\Phi(\x)=\sum_{i=1}^M h(x_{i-1},x_i).
\]
Moreover, there exist numerical constants $\ell_0,\mu_0>0$, independent
of $M$ and $N$, such that $\bar f$ is jointly $\ell_0$-smooth and 
\begin{equation}
\frac12\norm{\nabla_{\y}\bar f(\x;\y)}^2
\geq
\frac{\mu_0}{N}
\bigl(\bar\Phi(\x)-\bar f(\x;\y)\bigr).
\label{eq:hard-instance-dual-pl}
\end{equation}
Consequently,
Condition~\ref{cond:normalized-hard-instance}~\textup{(C1)} holds.
\end{proposition}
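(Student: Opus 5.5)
The argument has three parts: the value function, joint smoothness, and the dual P\L{} inequality. All three follow from the block-separable structure of $\bar f$ together with the block-level results already established.

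\textbf{Value function.} Fix $\x\in\R^M$. The dual variable enters $\bar f$ only through the disjoint blocks $\y^{(i)}$, so
\[
\max_{\y}\bar f(\x;\y)=\sum_{i=1}^M\max_{\y^{(i)}}C(x_{i-1},x_i;\y^{(i)}).
\]
For each $i$, Lemma~\ref{lem:ncpl-lifted-target-bounds} gives $0\le\widetilde h(x_{i-1},x_i)\le10\rho(x_{i-1})^2$, so the pair $(\rho(x_{i-1}),\widetilde h(x_{i-1},x_i))$ is admissible for $B$. Lemma~\ref{lem:ncpl-scaled-inner-block} then shows each block maximum is attained and finite, and \eqref{eq:embedded-outer-component} identifies it as $h(x_{i-1},x_i)$. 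Summing gives the claimed formula for $\bar\Phi$, together with attainment and finiteness.

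\textbf{Joint smoothness.} I will write $\bar f=\sum_i C\circ P_i$, where $P_i(\x,\y)=(x_{i-1},x_i,\y^{(i)})$ is a coordinate projection; for $i=1$ the first coordinate is the constant $x_0=1$, which is a harmless affine substitution. By Lemma~\ref{lem:ncpl-joint-smoothness}, $\nabla\bar f=\sum_i P_i^\top\nabla C(P_i\cdot)$.

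To bound the Lipschitz constant, take two points $\z,\z'$ and compute
\[
\|\nabla\bar f(\z)-\nabla\bar f(\z')\|^2 \le \Bigl(\sum_i\|P_i^\top(\nabla C(P_i\z)-\nabla C(P_i\z'))\|\Bigr)^2 .
\]
Summing this naively loses a factor of $M$, so I group the terms by coordinate instead. Each primal coordinate $x_k$ appears in at most two blocks ($i=k$ and $i=k+1$), and each dual coordinate appears in exactly one. Hence
\[
\|\nabla\bar f(\z)-\nabla\bar f(\z')\|^2\le 2\sum_i\|\nabla C(P_i\z)-\nabla C(P_i\z')\|^2\le 2L_C^2\sum_i\|P_i(\z-\z')\|^2\le 4L_C^2\|\z-\z'\|^2,
\]
where the middle step uses $(a+b)^2\le2a^2+2b^2$ on coordinates shared by two blocks. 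The last inequality uses $\sum_i\|P_i\w\|^2\le2\|\w\|^2$, which again holds because each coordinate is counted at most twice. This gives $\ell_0=2L_C$.

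\textbf{Dual P\L{}.} The dual gradient splits by blocks, so
\[
\tfrac12\|\nabla_{\y}\bar f\|^2=\sum_i\tfrac12\|\nabla_{\y^{(i)}}C(x_{i-1},x_i;\y^{(i)})\|^2 .
\]
Since the $-5\rho^2$ term in $C$ does not depend on $\y$, the dual gradient of each block equals that of $B(\rho(x_{i-1}),\widetilde h(x_{i-1},x_i);\cdot)$. For the same reason, the block gap $h(x_{i-1},x_i)-C(x_{i-1},x_i;\y^{(i)})$ equals the gap of $B$. Applying \eqref{eq:scaled-inner-pl} to each block, covering both the $\eta>0$ and $\eta=0$ cases, gives
\[
\tfrac12\|\nabla_{\y^{(i)}}C\|^2\ \ge\ \tfrac{1}{640N}\bigl(h(x_{i-1},x_i)-C(x_{i-1},x_i;\y^{(i)})\bigr).
\]
Summing over $i$ and using the value-function formula yields \eqref{eq:hard-instance-dual-pl} with $\mu_0=1/640$. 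Together with attainment, this is exactly Condition~\ref{cond:normalized-hard-instance}~(C1).

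The main difficulty, uniform smoothness of $C$ through the degenerate regime $\rho=0$, has already been delegated to Lemma~\ref{lem:ncpl-joint-smoothness}. What remains delicate is the smoothness step: the bound must be independent of $M$, which is why the overlap counting above is needed. The rest is bookkeeping.
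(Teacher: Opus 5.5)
Your proposal is correct and follows the paper's proof. Block separability gives the value-function identity and, through \eqref{eq:scaled-inner-pl}, the dual P\L{} inequality with $\mu_0=1/640$, while the bounded overlap of the blocks gives smoothness independent of $M$; the only difference is cosmetic, since the paper splits the blocks into odd- and even-indexed groups (each a sum of disjoint-support $L_C$-smooth functions) and adds them, whereas you count coordinate multiplicities directly, and both routes give $\ell_0=2L_C$.
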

\begin{proof}[Proof of Proposition~\ref{prop:ncpl-function-class}]
Equation~\eqref{eq:embedded-outer-component} shows that the
maximum of each block over $\y^{(i)}$ is finite, attained, and equal to
$h(x_{i-1},x_i)$. Because the dual blocks are disjoint, their maximizers
can be chosen independently. This proves the stated identity for
$\bar\Phi$.

For \(i\in[M]\), write
$C_i(\x;\y):=C(x_{i-1},x_i;\y^{(i)}).
$ 
By Lemma~\ref{lem:ncpl-joint-smoothness}, each \(C_i\) is
\(L_C\)-smooth and depends only on
\((x_{i-1},x_i,\y^{(i)})\). Hence two blocks have disjoint variable
supports whenever their indices differ by at least two. In particular,
the odd-indexed blocks have pairwise disjoint supports, and the same is
true of the even-indexed blocks.

Because the blocks within each group act on mutually orthogonal
coordinate subspaces, their smoothness constants do not add. Therefore,
both
\[
\sum_{\substack{i\in[M]\\ i\ {\rm odd}}}C_i
\qquad\text{and}\qquad
\sum_{\substack{i\in[M]\\ i\ {\rm even}}}C_i
\]
are \(L_C\)-smooth. Since \(\bar f\) is the sum of these two functions,
$
\Lip(\nabla\bar f)\leq L_C+L_C=2L_C.
$
Thus, \(\bar f\) is jointly \(\ell_0\)-smooth with
$
\ell_0:=2L_C.
$ 

We next establish the dual P\L{} inequality. The additive term
\(-5\rho(x_{i-1})^2\) in the definition of \(C\) is independent of
\(\y^{(i)}\). Hence, by \eqref{eq:scaled-inner-pl} and
\eqref{eq:embedded-outer-component}, 
\[
\frac12
\norm{
\nabla_{\y^{(i)}}
C(x_{i-1},x_i;\y^{(i)})
}^2
\geq
\frac{1}{640N}
\left(
h(x_{i-1},x_i)
-
C(x_{i-1},x_i;\y^{(i)})
\right).
\]
Summing over \(i\in[M]\), using the disjointness of the dual blocks and
the identity
$
\bar\Phi(\x)=\sum_{i=1}^M h(x_{i-1},x_i),
$
gives
\[
\begin{aligned}
\frac12\norm{\nabla_{\y}\bar f(\x;\y)}^2
&=
\sum_{i=1}^M
\frac12
\norm{
\nabla_{\y^{(i)}}
C(x_{i-1},x_i;\y^{(i)})
}^2\\
&\geq
\frac{1}{640N}
\sum_{i=1}^M
\left(
h(x_{i-1},x_i)
-
C(x_{i-1},x_i;\y^{(i)})
\right)\\
&=
\frac{1}{640N}
\bigl(\bar\Phi(\x)-\bar f(\x;\y)\bigr).
\end{aligned}
\]
Thus, \eqref{eq:hard-instance-dual-pl} holds with
\(\mu_0:=1/640\). 
\end{proof}

\subsection{The Sequential-Discovery Certificate}
\label{sec:verification-zero-chain}

We next verify
Condition~\ref{cond:normalized-hard-instance}~\textup{(C2)}. We first
record the behavior of an embedded block in its inactive regime.
If \(\lvert s\rvert\leq\frac12\), then
\(\psi(s)=\psi(-s)=0\), and hence
\(\rho(s)=\widetilde h(s,t)=0\). The definition of $C$
therefore gives $C(s,t;\y)=-\frac12\norm{\y^-}^2$. Differentiating this
expression gives
\begin{equation}
\nabla_s C(s,t;\y)
=
\nabla_t C(s,t;\y)
=
0,
\quad \text{and} \quad
\nabla_{\y}C(s,t;\y)
=
\y^-.
\label{eq:embedded-block-inactive}
\end{equation}

\begin{proposition}[Saddle zero-chain]
\label{prop:ncpl-saddle-zero-chain}
Let $\ord$ be defined in
\eqref{eq:joint-coordinate-order}.  For every $k\in[M(N+1)]$,
$$
    \supp_{\ord}(\x,\y)\subseteq[k-1]
    \quad\Longrightarrow\quad
    \supp_{\ord}\bigl(
        \nabla_{\x}\bar f(\x;\y),
        \nabla_{\y}\bar f(\x;\y)
    \bigr)
    \subseteq[k].
$$
Consequently, $\bar f$ is a first-order saddle zero-chain with respect to
$\ord$.
\end{proposition}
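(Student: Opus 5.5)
Fix $k$ and write $k-1=(i^\star-1)(N+1)+r$ with $i^\star\in[M]$ and $0\le r\le N$ (the case $k-1=M(N+1)$ cannot occur since $k\le M(N+1)$). Then the hypothesis $\supp_{\ord}(\x,\y)\subseteq[k-1]$ says: $x_i=0$ for all $i\ge i^\star$; $\y^{(i)}=\bz$ for all $i>i^\star$; and in block $i^\star$ the coordinates $y^{(i^\star)}_j$ vanish for $j>r$. We must show that every gradient component whose $\ord$-index exceeds $k$ is zero. These components are $\nabla_{x_i}\bar f$ for $i\ge i^\star$ with $(i,r)\neq(i^\star,N)$, $\nabla_{\y^{(i)}}\bar f$ for $i>i^\star$, and $\nabla_{y^{(i^\star)}_j}\bar f$ for $j\ge r+2$. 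Since $\bar f=\sum_i C_i$ and $C_i$ depends only on $(x_{i-1},x_i,\y^{(i)})$, we compute each such component block by block.

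\textbf{Blocks $i>i^\star$.} Here $x_{i-1}=0$, so $|x_{i-1}|\le\frac12$, and \eqref{eq:embedded-block-inactive} gives $\nabla_{x_{i-1}}C_i=\nabla_{x_i}C_i=0$ and $\nabla_{\y^{(i)}}C_i=(\y^{(i)})^-=\bz$. Hence these blocks contribute nothing to any late coordinate. In particular, $\nabla_{x_i}\bar f$ for $i>i^\star$ receives contributions only from $C_i$ and $C_{i+1}$, and both vanish.

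\textbf{Block $i^\star$.} Let $s=x_{i^\star-1}$, $t=x_{i^\star}=0$, and $\y=\y^{(i^\star)}$. If $\rho(s)=0$, then $C_{i^\star}=-\frac12\|\y^-\|^2$, so all its gradients vanish on zero coordinates and its $t$-derivative is zero. Suppose instead $\rho(s)>0$. Then $C=-5\rho(s)^2+\rho(s)^2 G(\widetilde h/\rho^2;\,\bW^{-1}\y/\rho)$. The relevant $y$-gradients are, up to positive scalings, the components of $\nabla G$. Using \eqref{eq:gradient-H}, for $j\ge r+2$ we have $y_{j-1}=y_j=0$, so $q(y_{j-1})=0$, $y_j^-=0$, and $q'(y_j)=0$. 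Therefore $\nabla_{y_j}H=0$, and for $j=N$ also $\alpha q'(y_N)=0$. For $x_{i^\star}$ with $r<N$, \eqref{eq:embedded-successor-interface} gives $\nabla_t C=q(0)\nabla_t h=0$. The derivative of $C_{i^\star}$ with respect to $s=x_{i^\star-1}$ is irrelevant, since that coordinate has $\ord$-index at most $k-1$.

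\textbf{Remaining coordinate $x_{i^\star}$.} We still need the contribution of $C_{i^\star+1}$ to $\nabla_{x_{i^\star}}\bar f$. Since $x_{i^\star}=0$, this is $\nabla_s C(0,\cdot;\cdot)=0$ by \eqref{eq:embedded-block-inactive}. When $r=N$, the coordinate $x_{i^\star}$ has index exactly $k$, so it is allowed to be nonzero. The case $i^\star=1$ (with $s=x_0=1$) is covered by the same argument.

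\textbf{Main obstacle.} The delicate point is the bookkeeping at the block boundary: both $C_{i^\star}$ and $C_{i^\star+1}$ touch $x_{i^\star}$, and for block $i^\star$ one must handle the degenerate case $\rho(s)=0$ separately, as done above.
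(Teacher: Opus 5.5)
Your proposal is correct and follows essentially the same argument as the paper. Your split $r<N$ versus $r=N$ is the paper's case distinction on whether $\pi_k$ is the dual coordinate $y^{(i)}_{r+1}$ or the primal coordinate $x_i$. As in the paper, later blocks are handled by \eqref{eq:embedded-block-inactive}, and the active block through the $\rho(x_{i-1})>0$ versus $\rho(x_{i-1})=0$ dichotomy using \eqref{eq:gradient-H} and the terminal gate \eqref{eq:embedded-successor-interface}. The only difference is bookkeeping: you enumerate the late gradient components and note which blocks touch them, which implicitly disposes of the blocks $i<i^\star$ that the paper treats explicitly.
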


\begin{proof}[Proof of Proposition~\ref{prop:ncpl-saddle-zero-chain}]

Fix \(k\in[M(N+1)]\) and suppose that
$
\supp_{\ord}(\x,\y)\subseteq[k-1].
$ 
We distinguish whether the \(k\)-th coordinate under \(\ord\) is dual or
primal.

Suppose first that \(\pi_k\) corresponds to \(y_j^{(i)}\). Then 
\begin{equation}
y_j^{(i)}=\cdots=y_N^{(i)}=0,
\qquad
\y^{(i+1)}=\cdots=\y^{(M)}=\bz_N,
\qquad\text{and}\qquad
x_i=\cdots=x_M=0.
\label{eq:zero-chain-dual-case}
\end{equation}
All blocks with index smaller than \(i\) involve only coordinates
preceding \(y_j^{(i)}\). Moreover, every block with index greater than
\(i\) is evaluated at
$
(x_{r-1},x_r;\y^{(r)})=(0,0;\bz_N).
$ Its full gradient therefore vanishes by
\eqref{eq:embedded-block-inactive}. It remains to examine the \(i\)-th
block.

Set $
\eta:=\rho(x_{i-1}).
$
If \(\eta>0\), define
\[
\widetilde{\y}
:=
\frac{\bW^{-1}\y^{(i)}}{\eta},
\quad \text{and} \quad \widetilde\alpha
:=
\frac{\widetilde h(x_{i-1},x_i)}{\eta^2}.
\]
The diagonal transformation preserves coordinate supports, and
\begin{equation}
\nabla_{y_r^{(i)}}C(x_{i-1},x_i;\y^{(i)})
=
\frac{\eta}{\sqrt{\omega_r}}
\left(
-\nabla_{\widetilde y_r}H(\widetilde{\y})
+
\mathbf{1}_{\{r=N\}}
\widetilde\alpha q'(\widetilde y_N)
\right).
\label{eq:gradient-C}
\end{equation}
For every \(r\in\{j+1,\ldots,N\}\),
\eqref{eq:zero-chain-dual-case} implies that all coordinates of
\(\widetilde{\y}\) entering
\(\nabla_{\widetilde y_r}H(\widetilde{\y})\) are zero.
 Hence,
\eqref{eq:gradient-H} and \eqref{eq:gradient-C}, together with
\(q(0)=q'(0)=0\) and \(0^-=0\), give
\[
\nabla_{y_r^{(i)}}C(x_{i-1},x_i;\y^{(i)})
=
0,
\qquad r>j.
\] Furthermore, \(y_N^{(i)}=0\), so
\eqref{eq:embedded-successor-interface} yields
$
\nabla_tC(x_{i-1},x_i;\y^{(i)})=0.
$

If \(\eta=0\), then
\(\lvert x_{i-1}\rvert\leq\frac12\). Therefore,
\eqref{eq:embedded-block-inactive} gives
\[
\nabla_tC(x_{i-1},x_i;\y^{(i)})=0,
\quad \text{and} \quad
\nabla_{\y}C(x_{i-1},x_i;\y^{(i)})
=
(\y^{(i)})^-.
\]
Since \(y_j^{(i)}=\cdots=y_N^{(i)}=0\), the latter vector is supported
only on \(y_1^{(i)},\ldots,y_{j-1}^{(i)}\). Thus, in either case, the
\(i\)-th block produces no gradient in coordinates following
\(y_j^{(i)}\). Together with the vanishing gradients of the later blocks,
this proves
$\supp_{\ord}\bigl(
\nabla_{\x}\bar f(\x;\y),
\nabla_{\y}\bar f(\x;\y)
\bigr)
\subseteq[k].
$

Suppose next that \(\pi_k\) corresponds to \(x_i\). Then
\[
x_i=\cdots=x_M=0,
\quad \text{and} \quad
\y^{(i+1)}=\cdots=\y^{(M)}=\bz_N.
\]
All variables on which the \(i\)-th block depends,
namely \(x_{i-1}\), \(\y^{(i)}\), and \(x_i\), lie among the first \(k\)
coordinates under \(\ord\). Every block with index greater than \(i\) is
again evaluated at \((0,0;\bz_N)\), so its full gradient vanishes by
\eqref{eq:embedded-block-inactive}. Hence,
$
\supp_{\ord}\bigl(
\nabla_{\x}\bar f(\x;\y),
\nabla_{\y}\bar f(\x;\y)
\bigr)
\subseteq[k].
$
This completes the proof.

\end{proof}

\subsection{Terminal Obstruction and Initial Gap}
\label{sec:verification-value}


We now establish the terminal-gradient and initial-gap certificates for
this explicit value function.

\begin{proposition}[Terminal obstruction and initial gap]
\label{prop:ncpl-terminal-gap}
The value function $\bar\Phi$ is differentiable. The last position in
$\ord$ corresponds to $x_M$. Moreover,
\[
x_M=0
\quad\Longrightarrow\quad
\norm{\nabla\bar\Phi(\x)}\geq1,
\]
and
\[
\bar\Phi(\bz)-\inf_{\x\in\R^M}\bar\Phi(\x)\leq12M.
\]
Thus, Conditions~\ref{cond:normalized-hard-instance}~\textup{(C3)}
and~\textup{(C4)} hold with $g_0:=1$ and $\Delta_0:=12$.
\end{proposition}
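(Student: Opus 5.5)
By Proposition~\ref{prop:ncpl-function-class}, $\bar\Phi(\x)=\sum_{i=1}^M h(x_{i-1},x_i)$ with $x_0=1$. This is exactly the Carmon et al.\ zero-chain, up to the sign convention $h(s,t)=\psi(-s)\varphi(-t)-\psi(s)\varphi(t)$. The plan is to read all three claims directly off this explicit formula.

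\textbf{Differentiability and the last coordinate.} Since $\psi,\varphi$ are $C^\infty$ by Fact~\ref{lem:outer-activation-functions}(i), $\bar\Phi$ is smooth. The last coordinate is $\pi_{M(N+1)}=M$ by \eqref{eq:joint-coordinate-order} with $i=M$, $j=N+1$, so it corresponds to $x_M$.

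\textbf{Terminal gradient bound.} Let $x_M=0$. Mimicking \citet[Lemma~2]{carmon2019lower}, I choose the smallest index $j\in[M]$ with $|x_j|<1$; it exists because $|x_M|=0<1$. Then $|x_{j-1}|\ge1$, using $x_0=1$ when $j=1$. The coordinate $x_j$ enters only the terms $h(x_{j-1},x_j)$ and $h(x_j,x_{j+1})$ (the latter only if $j<M$). Differentiating gives
\[
\nabla_{x_j}\bar\Phi(\x)
=-\psi(-x_{j-1})\varphi'(-x_j)-\psi(x_{j-1})\varphi'(x_j)
-\psi'(-x_j)\varphi(-x_{j+1})-\psi'(x_j)\varphi(x_{j+1}).
\]
All four terms are nonpositive by Fact~\ref{lem:outer-activation-functions}(iv), so no cancellation can occur.

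If $x_{j-1}\ge1$, the second term alone has magnitude $\psi(x_{j-1})\varphi'(x_j)>1$ by Fact~\ref{lem:outer-activation-functions}(iii), since $|x_j|<1$. If $x_{j-1}\le-1$, the first term gives $\psi(-x_{j-1})\varphi'(-x_j)>1$ in the same way. Hence $\|\nabla\bar\Phi(\x)\|\ge|\nabla_{x_j}\bar\Phi(\x)|>1$. The only care needed is this sign bookkeeping, ensuring the two-sided form of $h$ produces a same-sign sum; checking it is the main point of the argument.

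\textbf{Initial gap.} At $\x=\bz$:
\begin{itemize}
\item The first term is $h(1,0)=-\psi(1)\varphi(0)$, because $\psi(-1)=0$.
\item Every later term is $h(0,0)=0$, because $\psi(0)=0$.
\end{itemize}
So $\bar\Phi(\bz)=-\psi(1)\varphi(0)\le0$. For a lower bound on the infimum, Fact~\ref{lem:outer-activation-functions}(iv) gives $h(s,t)\ge-\psi(s)\varphi(t)>-e\sqrt{2\pi e}$ for every $(s,t)$. Numerically $e\sqrt{2\pi e}\approx 11.2<12$. Therefore $\inf\bar\Phi\ge-12M$, and consequently $\bar\Phi(\bz)-\inf\bar\Phi\le 12M$.
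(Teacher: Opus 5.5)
Your proof is correct and follows the paper's argument essentially step for step. You use the explicit formula for $\bar\Phi$, take the smallest index with $|x_j|<1$, note that all contributions to $\nabla_{x_j}\bar\Phi$ are nonpositive so the $\nabla_t h(x_{j-1},x_j)$ term alone exceeds $1$ via Fact~\ref{lem:outer-activation-functions}(iii), and bound the gap with $h>-e\sqrt{2\pi e}>-12$; the only cosmetic difference is that you split on the sign of $x_{j-1}$, whereas the paper uses the evenness of $\varphi'$ to write $\nabla_t h(s,t)=-\rho(s)^2\varphi'(t)$ together with $\rho(x_{j-1})^2\geq\psi(|x_{j-1}|)$.
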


\begin{proof}[Proof of Proposition~\ref{prop:ncpl-terminal-gap}]
Since $\psi$ and $\varphi$ are smooth, so are $h$ and $\bar\Phi$. Moreover,
\eqref{eq:joint-coordinate-order} gives $\pi_{M(N+1)}=M$, so the last
position in $\ord$ corresponds to $x_M$.

Direct differentiation, together with the nonnegativity properties in
Fact~\ref{lem:outer-activation-functions}~\textup{(iv)}, gives
\begin{equation}
    \label{eq:1}
    \nabla_s h(s,t)
    =
    -\psi'(-s)\varphi(-t)-\psi'(s)\varphi(t)
    \leq0.
\end{equation}
Moreover, since $\varphi'$ is even,
\begin{equation}
    \label{eq:2}
    \nabla_t h(s,t)
    =
    -\bigl(\psi(s)+\psi(-s)\bigr)\varphi'(t)
    =
    -\rho(s)^2\varphi'(t)
    \leq0.
\end{equation}
Suppose that $x_M=0$, and let $k\in[M]$ be the smallest index such that
$|x_k|<1$. Such an index exists, and $|x_{k-1}|\geq1$ because $x_0=1$. We thus obtain
\begin{align*}
    -\nabla_{x_k}\bar\Phi(\x)
    &=
    \begin{cases}
    -\nabla_t h(x_{k-1},x_k)
    -\nabla_s h(x_k,x_{k+1}),
    & k<M,\\[1mm]
    -\nabla_t h(x_{M-1},x_M),
    & k=M,
    \end{cases}\\
    &\geq
    -\nabla_t h(x_{k-1},x_k)
    =
    \rho(x_{k-1})^2\varphi'(x_k)
    >1,
\end{align*}
where the first inequality follows from \eqref{eq:1} and the
second equality from \eqref{eq:2}. For the final inequality, note that $|x_{k-1}|\geq1$ and
$|x_k|<1$. Since
$\rho(x_{k-1})^2\geq\psi(|x_{k-1}|)$,
Fact~\ref{lem:outer-activation-functions}~\textup{(iii)},
applied with $s=|x_{k-1}|$ and $t=x_k$, yields the claim.
Thus $\norm{\nabla\bar\Phi(\x)}>1$, which proves the terminal bound.

Finally, Fact~\ref{lem:outer-activation-functions}~\textup{(iv)} gives
\[
    h(s,t)
    =
    \psi(-s)\varphi(-t)-\psi(s)\varphi(t)
    \geq
    -\psi(s)\varphi(t)
    >
    -e\sqrt{2\pi e}
    >
    -12.
\]
Consequently,
\(
    \inf_{\x\in\R^M}\bar\Phi(\x)\geq-12M.
\) Since $x_0=1$, we have
\[
    \bar\Phi(\bz)
    =
    h(1,0)+\sum_{i=2}^Mh(0,0)
    =
    -\varphi(0)
    <0.
\]
It follows that
\[
    \bar\Phi(\bz)-\inf_{\x\in\R^M}\bar\Phi(\x)
    \leq12M.
\]
This proves Conditions~\ref{cond:normalized-hard-instance}~\textup{(C3)}
and~\textup{(C4)}.
\end{proof}
\subsection{Proofs of Theorems~\ref{thm:zr-lower} and
\ref{thm:deterministic}}
\label{sec:pf-thm}
We first use the following scaling lemma to extend the unscaled hard instance to general parameter settings.
\begin{lemma}[Scaling Lemma]
\label{lem:scaling}
Fix an admissible pair \((M,N)\), and assume
Condition~\ref{cond:normalized-hard-instance}. For \(\ell,\lambda>0\), set
\[
    f(\x;\y)
    :=
    \frac{\ell\lambda^2}{\ell_0}
    \bar f\!\left(
        \frac{\x}{\lambda};
        \frac{\y}{\lambda}
    \right)
\]
on \(\R^M\times\R^{MN}\). Then the following properties hold:
\begin{enumerate}[(i)]
    \item For every $\x$, the maximum defining $\Phi(\x)$ is finite and attained, and $\Phi$ satisfies 
    \begin{align}
        \Phi(\x)
        &=
        \frac{\ell\lambda^2}{\ell_0}
        \bar\Phi\!\left(\frac{\x}{\lambda}\right),
        \qquad\text{and}\qquad
        \nabla\Phi(\x)
        =
        \frac{\ell\lambda}{\ell_0}
        \nabla\bar\Phi\!\left(\frac{\x}{\lambda}\right).
        \label{eq:gradient-scaling}
    \end{align}

    \item The function \(f\) is jointly \(\ell\)-smooth and
    \(\frac{\ell\mu_0}{\ell_0N}\)-dual P\L{}.

    \item The scaling preserves coordinate supports and hence the saddle
    zero-chain property with respect to the same coordinate ordering.
\end{enumerate}
\end{lemma}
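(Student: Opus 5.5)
The plan is a direct change-of-variables argument, and each item of Lemma~\ref{lem:scaling} is handled separately. Write $c:=\ell\lambda^2/\ell_0$ and note that $(\x,\y)\mapsto(\x/\lambda,\y/\lambda)$ is a bijection of $\R^M\times\R^{MN}$.

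For (i), fix $\x$. Because $\y\mapsto\y/\lambda$ is onto $\R^{MN}$ and $c>0$, we get
\[
\max_{\y}f(\x;\y)=c\max_{\bar\y}\bar f(\x/\lambda;\bar\y)=c\,\bar\Phi(\x/\lambda).
\]
Proposition~\ref{prop:ncpl-function-class} says the inner maximum of $\bar f$ is finite and attained, say at $\bar\y^\star$. Then $\lambda\bar\y^\star$ attains the maximum for $f$. Proposition~\ref{prop:ncpl-terminal-gap} gives differentiability of $\bar\Phi$, so the chain rule yields $\nabla\Phi(\x)=(c/\lambda)\nabla\bar\Phi(\x/\lambda)=(\ell\lambda/\ell_0)\nabla\bar\Phi(\x/\lambda)$, which is \eqref{eq:gradient-scaling}.

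For (ii), the chain rule gives
\[
\nabla f(\x;\y)=\frac{c}{\lambda}\nabla\bar f\!\left(\frac{\x}{\lambda};\frac{\y}{\lambda}\right).
\]
Since $\bar f$ is $\ell_0$-smooth by (C1),
\[
\|\nabla f(\z_1)-\nabla f(\z_2)\|\le \frac{c}{\lambda}\,\ell_0\,\frac{\|\z_1-\z_2\|}{\lambda}=\ell\|\z_1-\z_2\|.
\]
For the P\L{} inequality, set $\bar\x=\x/\lambda$ and $\bar\y=\y/\lambda$. Then
\[
\frac12\|\nabla_\y f(\x;\y)\|^2=\frac{c^2}{\lambda^2}\cdot\frac12\|\nabla_{\bar\y}\bar f(\bar\x;\bar\y)\|^2\ge\frac{c^2}{\lambda^2}\frac{\mu_0}{N}\bigl(\bar\Phi(\bar\x)-\bar f(\bar\x;\bar\y)\bigr).
\]
By (i), the bracket equals $(\Phi(\x)-f(\x;\y))/c$. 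The resulting constant is therefore $\frac{c}{\lambda^2}\frac{\mu_0}{N}=\frac{\ell\mu_0}{\ell_0N}$, as claimed. The nonempty-solution-set and finiteness requirements were already obtained in (i).

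For (iii), scaling a vector by $1/\lambda$ or by the positive factor $c/\lambda$ leaves its support unchanged. Suppose $\supp_{\ord}(\x,\y)\subseteq[i-1]$. The same inclusion then holds for $(\x/\lambda,\y/\lambda)$. Applying (C2) gives $\supp_{\ord}\nabla\bar f(\x/\lambda;\y/\lambda)\subseteq[i]$, and hence $\supp_{\ord}\nabla f(\x;\y)\subseteq[i]$.

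I do not expect a genuine obstacle, since everything is bookkeeping of the factors $c$ and $\lambda$. The one point that needs care is the P\L{} step, where the two factors of $c$ must combine with $\lambda^{-2}$ to give exactly $\ell\mu_0/(\ell_0N)$. The gap must be rewritten via (i) before this comparison, because the P\L{} constant is not simply the smoothness ratio scaled.
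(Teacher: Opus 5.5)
Your proposal is correct and follows the paper's proof essentially step for step. You use the bijection $\y\mapsto\y/\lambda$ for (i), the chain-rule identity $\nabla f(\x;\y)=\frac{\ell\lambda}{\ell_0}\nabla\bar f(\x/\lambda;\y/\lambda)$ combined with (C1) for (ii), and positivity of the scaling factors for (iii); your write-up simply makes the constant bookkeeping explicit.

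Two harmless remarks. First, the lemma is phrased under the abstract Condition~\ref{cond:normalized-hard-instance}, so attainment and differentiability of $\bar\Phi$ can be taken from (C1) rather than from the concrete Propositions~\ref{prop:ncpl-function-class} and~\ref{prop:ncpl-terminal-gap}: dual P\L{} in the sense of Definition~\ref{def:function-class} already includes attainment, and the paper cites differentiability of smooth NC-P\L{} value functions. Second, your closing caveat is unnecessary, since the smoothness and P\L{} constants are both multiplied by the same factor $\ell/\ell_0$.
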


\begin{proof}[Proof of Lemma~\ref{lem:scaling}]
For \textup{(i)}, the map
\(\y\mapsto\frac{\y}{\lambda}\) is a bijection on \(\R^{MN}\), so it
preserves finiteness and attainment of the inner maximum and gives the value
function identity in \eqref{eq:gradient-scaling}; differentiating gives the
gradient identity.

For \textup{(ii)}, the chain rule gives
\[
    \nabla f(\x;\y)
    =
    \frac{\ell\lambda}{\ell_0}
    \nabla\bar f\!\left(
        \frac{\x}{\lambda};
        \frac{\y}{\lambda}
    \right),
\]
which proves joint smoothness immediately.
Condition~\ref{cond:normalized-hard-instance}~\textup{(C1)} then yields
\[
    \frac{1}{2}\norm{\nabla_{\y}f(\x;\y)}^2
    \geq
    \frac{\ell\mu_0}{\ell_0N}
    \bigl(\Phi(\x)-f(\x;\y)\bigr).
\]

Finally, for \textup{(iii)}, since the scaling factors are positive,
\[
    \supp(\x,\y)
    =
    \supp\left(
        \frac{\x}{\lambda},
        \frac{\y}{\lambda}
    \right),
    \qquad\mbox{and}\qquad
    \supp\nabla f(\x;\y)
    =
    \supp\nabla\bar f\left(
        \frac{\x}{\lambda};
        \frac{\y}{\lambda}
    \right).
\]
Thus, the scaling preserves the saddle zero-chain property under the same
coordinate ordering.
\end{proof}
We now combine the four hard-instance certificates with the
scaling identities to prove Theorems~\ref{thm:zr-lower}
and~\ref{thm:deterministic}. 
\begin{proof}[Proof of Theorem~\ref{thm:zr-lower}]
Propositions~\ref{prop:ncpl-function-class},
\ref{prop:ncpl-saddle-zero-chain}, and~\ref{prop:ncpl-terminal-gap} verify
Condition~\ref{cond:normalized-hard-instance} with numerical constants
\(\ell_0,\mu_0,g_0,\Delta_0>0\). Set
\[
    c_0:=\frac{2\ell_0}{\mu_0},
    \qquad
    c_1:=\frac{g_0^2}{8\ell_0\Delta_0},
    \qquad\text{and}\qquad
    c_2:=\frac{g_0^2\mu_0}{8\ell_0^2\Delta_0}.
\]
Fix \(\ell,\mu,\Delta,\epsilon>0\) satisfying the assumptions of the
theorem, and choose
\begin{equation}
    \label{eq:parameter}
    N:=\left\lfloor\frac{\mu_0\kappa}{\ell_0}\right\rfloor,
    \qquad
    \lambda:=\frac{2\ell_0\epsilon}{\ell g_0},
    \qquad\text{and}\qquad
    M:=\left\lfloor
        \frac{\ell_0\Delta}{\ell\Delta_0\lambda^2}
    \right\rfloor.
\end{equation}
The assumptions $\kappa\ge c_0$ and
$\epsilon^2\le c_1\ell\Delta$ imply $M,N\ge2$. Thus, $(M,N)$ is
admissible.

Let \(\bar f\) be the resulting unscaled instance and define
\(
    f(\x;\y)
    :=
    \frac{\ell\lambda^2}{\ell_0}
    \bar f\left(\frac{\x}{\lambda};\frac{\y}{\lambda}\right).
\) We first verify that \(f\in\cF(\ell,\mu,\Delta)\).
Lemma~\ref{lem:scaling} shows that \(f\) is jointly \(\ell\)-smooth and its dual P\L{} constant is at least
\[
    \frac{\ell\mu_0}{\ell_0N}
    \geq\frac{\ell}{\kappa}
    =\mu.
\]
Moreover, Condition~\ref{cond:normalized-hard-instance}~\textup{(C4)} gives
\[
    \Phi(\bz)-\inf\Phi
    \leq
    \frac{\ell\lambda^2}{\ell_0}\Delta_0M
    \leq\Delta.
\]
Thus \(f\in\cF(\ell,\mu,\Delta)\).

We now show that every deterministic zero-respecting algorithm requires at
least \(M(N+1)\) queries to find an \(\epsilon\)-stationary point of \(f\). Let \(\sZ\in\cA_{\zr}\cap\cA_{\det}\) run on \(f\). Positive
scaling preserves coordinate supports, so Lemma~\ref{lem:scaling}~\textup{(iii)}
and Condition~\ref{cond:normalized-hard-instance}~\textup{(C2)} imply, by
induction, that
\[
    \supp_{\ord}\bigl(
        \sZ_{\x}^{(t)}[f],\sZ_{\y}^{(t)}[f]
    \bigr)
    \subseteq[t],
    \qquad t\in[M(N+1)].
\]
Because \(\pi_{M(N+1)}=M\), the last primal coordinate remains zero over
$t\in[M(N+1)-1]$. Hence Condition~\ref{cond:normalized-hard-instance}~\textup{(C3)}
and \eqref{eq:gradient-scaling} give, for every \(t<M(N+1)\),
\[
    \norm{\nabla\Phi(\sZ_{\x}^{(t)}[f])}
    =
    \frac{\ell\lambda}{\ell_0}
    \norm{\nabla\bar\Phi\left(\frac{\sZ_{\x}^{(t)}[f]}{\lambda}\right)}
    \geq
    \frac{\ell\lambda g_0}{\ell_0}
    =2\epsilon.
\]
The parameter selection in \eqref{eq:parameter} gives
\[
    M\geq
    \frac{g_0^2\ell\Delta}{8\ell_0\Delta_0\epsilon^2},
    \qquad\mbox{and}\qquad
    N+1\geq\frac{\mu_0\kappa}{\ell_0},
\]
where we use the fact $\lfloor u\rfloor\ge \max\{u/2,u-1\}$ for $u\ge 2$.
Therefore, every such \(\sZ\) requires at least
\[
    M(N+1)
    \geq
    c_2\frac{\kappa\ell\Delta}{\epsilon^2}
\]
queries to reach an \(\epsilon\)-stationary point.
\end{proof}

\begin{proof}[Proof of Theorem~\ref{thm:deterministic}]
Use the constants from Theorem~\ref{thm:zr-lower}, and fix parameters
satisfying its assumptions. Set
\[
    T_0
    :=
    \left\lceil
        c_2\frac{\kappa\ell\Delta}{\epsilon^2}
    \right\rceil.
\]
Theorem~\ref{thm:zr-lower} provides an instance
\(f\in\cF(\ell,\mu,\Delta)\), for which
every \(\sZ\in\cA_{\zr}\cap\cA_{\det}\) satisfies
\[
    \norm{\nabla\Phi(\sZ_{\x}^{(t)}[f])}>\epsilon,
    \qquad 0\leq t<T_0.
\]

Fix \(\sA\in\cA_{\det}\). Lemma~\ref{lem:finite-horizon-resisting-oracle}
provides \(\sZ\in\cA_{\zr}\cap\cA_{\det}\) and orthogonal embeddings $\bU$ and $\bV$ such that the first $T_0$
query transcripts of $\sA$ and $\sZ$ agree after projection. Lemma~\ref{lem:isometric-invariance} gives
\(f_{\bU,\bV}\in\cF(\ell,\mu,\Delta)\) and, for every \(t<T_0\),
\[
\begin{aligned}
    \norm{\nabla\Phi_{\bU,\bV}(
        \sA_{\x}^{(t)}[f_{\bU,\bV}])}
    &=
    \norm{\nabla\Phi(
        \bU^\top\sA_{\x}^{(t)}[f_{\bU,\bV}])}=
    \norm{\nabla\Phi(\sZ_{\x}^{(t)}[f])}
    >\epsilon.
\end{aligned}
\]
Thus \(\sA\) needs at least \(T_0\) queries on an instance in the target
class. Since \(\sA\) was arbitrary,
\[
    \cT_{\epsilon}(\cA_{\det},\cF(\ell,\mu,\Delta))
    \geq T_0
    \geq c_2\frac{\kappa\ell\Delta}{\epsilon^2}.
\]
This proves the theorem.
\end{proof}

\section{Closing Remarks}
\label{sec:close}
We established the deterministic first-order lower bound
$\Omega(\ell\Delta\kappa/\epsilon^2)$ for smooth NC-P\L{} minimax
optimization. Together with the matching upper bound of \citet{yang2022},
this result determines the optimal dependence on
$(\ell,\Delta,\kappa,\epsilon)$ in the parameter regime considered here
and distinguishes the linear $\kappa$ dependence of the NC-P\L{}
class from the $\sqrt{\kappa}$ dependence available under strong
concavity.  Two directions remain open. First, extending the construction to randomized first-order
methods requires a robust saddle zero-chain; the probabilistic zero-chain mechanism of
\citet{arjevani2023lower} is the natural starting point. 
Second, the two-sided P\L{} setting of \citet{yang2020global}, in which the primal
variable also satisfies a P\L{} condition, has a different upper bound landscape and is
not covered by our construction.

\section*{AI Disclosure}
The hard instance construction was developed through iterative collaboration first with \texttt{GPT-5.5 Pro}. Subsequently, \texttt{GPT-5.6 Sol Ultra} was used to simplify the
construction and improve the presentation of the proofs. The authors
independently verified all mathematical arguments and take full responsibility
for the content of the paper. An accompanying Lean formalization, developed with Codex and available at \url{https://github.com/SiyuPan04/Lower-Bounds-for-Nonconvex-PL-Minimax-Optimization}, provides a formal verification of the deterministic first-order oracle lower bound established in this paper.

\section*{Acknowledgements}
Jiajin Li was supported by the Natural Sciences and Engineering
Research Council of Canada through Discovery Grant RGPIN-2025-05817.
\bibliography{ref}
\bibliographystyle{abbrvnat}

\appendix
\section{Proofs of Technical Lemmas}
\label{app:ncpl-deferred-estimates}
This appendix proves the technical lemmas used in the main
text. Second-derivative bounds for piecewise-smooth functions are
understood almost everywhere and therefore bound the Lipschitz constants
of the corresponding first derivatives. Unless stated otherwise, all
unnamed constants are positive numerical constants independent of
$M,N,\ell,\mu,\Delta$, and $\epsilon$, and all big-$\cO$ bounds are
uniform in the variables under consideration and, where relevant, in
$N$.

{
\subsection{Proof of Lemma~\ref{lem:ncpl-gate-bounds}}
\label{app:proof-ncpl-gate-bounds}
}

\begin{proof}[Proof of Lemma~\ref{lem:ncpl-gate-bounds}]
Direct differentiation on the nonconstant polynomial pieces shows that
$q$ is nondecreasing from $0$ to $1$, with
$0\leq q'\leq\frac32$ and $|q''|\leq6$. Since $q'$ is
continuous across the junctions and $q$ is constant outside $[0,1]$,
the asserted bounds for $q$ follow.

Since $p(t)=q((t+1)/2)$, the chain rule gives
$0\leq p'\leq\frac34$ and
$\Lip(p')\leq\frac32$. The range bound for $p$ follows from that
of $q$. Moreover, $(t+1)/2\geq t$ for $t\leq1$, whereas both
$p(t)$ and $q(t)$ equal one for $t\geq1$; hence
$p(t)\geq q(t)$.

It remains to verify the final inequality. For $0\leq t\leq1$,
\[
1-q(t)=(1-t)^2(1+2t),
\qquad 
p'(t)=\frac34(1-t^2),
\]
and $1+2t\leq\frac98(1+t)^2$. For $-1\leq t\leq0$, setting
$u=t^2$ gives
\[
2\left((p'(t))^2+(t^-)^2\right)-1
=
\frac{1-2u+9u^2}{8}
\geq0.
\]
The cases $t\leq-1$ and $t\geq1$ follow directly from
$t^-= -t$ and $q(t)=1$, respectively. This completes the proof.
\end{proof}

{
\subsection{Proof of Lemma~\ref{lem:ncpl-weight-estimates}}
\label{app:proof-ncpl-weight-estimates}
}

\begin{proof}[Proof of Lemma~\ref{lem:ncpl-weight-estimates}]
By the definition of the weights,
$\omega_{N-1}=\omega_N=1$ and
$\frac{\omega_{j-1}^2}{\omega_j}=\frac{1}{N}$ for every $j\in[N-1]$. Moreover, whenever
$\frac{1}{N}\leq\omega_j\leq1$, the recursion gives
$\frac{1}{N}\leq\omega_{j-1}\leq\omega_j$. Backward induction therefore proves
\eqref{eq:monotone-weight}.

Let $S_\omega:=\sum_{j=1}^{N-1}\omega_{j-1}$. By the AM-GM inequality,
$$
S_\omega
=
\sum_{j=1}^{N-1}\sqrt{\frac{\omega_j}{N}}
\leq
\frac12\left(
S_\omega-\omega_0+1+\frac{N-1}{N}
\right)
<
\frac12S_\omega+1.
$$
Thus $S_\omega<2$. Since $\omega_{N-1}=1$, the first inequality in
\eqref{eq:ncpl-weight-sums} follows. Finally, the recursion and
$\omega_{N-1}=\omega_N=1$ give
$$
\sum_{j=1}^N\frac{\omega_{j-1}^2}{\omega_j}
=
\frac{N-1}{N}+1
<2,
$$
which proves the second inequality.
\end{proof}

\subsection{Proof of Lemma~\ref{lem:ncpl-outer-scale-bounds}}
\label{app:proof-ncpl-outer-scale-bounds}

\begin{proof}[Proof of Lemma~\ref{lem:ncpl-outer-scale-bounds}]
Define
\[
\theta(s)
:=
\begin{cases}
0, & s\leq\frac12,\\[1mm]
\displaystyle
\exp\left(\frac12-\frac{1}{2(2s-1)^2}\right),
&s>\frac12.
\end{cases}
\]
Then $\theta(s)^2=\psi(s)$. Since the supports of $\psi(s)$ and
$\psi(-s)$ are disjoint,
\[
\rho(s)
=
\sqrt{\psi(s)+\psi(-s)}
=
\theta(s)+\theta(-s).
\]
The exponential cutoff is flat at $s=\frac12$, and direct
differentiation on $(\frac12,\infty)$ shows that $\theta$ and its first
two derivatives are bounded by numerical constants. Hence, $\theta$ is
smooth on $\R$, $\theta'$ is Lipschitz, and the same properties hold for
$\rho$. The identity above also gives
$\rho(s)=\rho'(s)=0$ whenever $|s|\leq\frac12$.
\end{proof}

{
\subsection{Proof of Lemma~\ref{lem:ncpl-lifted-target-bounds}}
\label{app:proof-ncpl-lifted-target-bounds}
}

\begin{proof}[Proof of Lemma~\ref{lem:ncpl-lifted-target-bounds}]
For this proof, write
\[
    \alpha^{+}(t):=5-\varphi(t),
    \qquad\text{and}\qquad
    \alpha^{-}(t):=5+\varphi(-t).
\]
The superscripts label the two regions and do not denote positive and
negative parts.
The range bound in Fact~\ref{lem:outer-activation-functions}~\textup{(iv)}
gives $0<\alpha^{\pm}<10$. Moreover,
$\sup_t|\varphi'(t)|=\sqrt e<2$ and
$\Lip(\varphi')=\sup_t|\varphi''(t)|=1$, which gives the asserted
derivative bound directly.

Using $\rho(s)^2=\psi(s)+\psi(-s)$, we obtain
$$
\widetilde h(s,t)
=
\psi(s)\alpha^{+}(t)+\psi(-s)\alpha^{-}(t).
$$
Since $\psi(s)$ and $\psi(-s)$ are active only when $s>\frac12$ and
$s<-\frac12$, respectively, this identity gives
\eqref{eq:lifted-target-active-form}.
\end{proof}

\subsection{Proof of Lemma~\ref{lem:ncpl-joint-smoothness}}
\label{app:proof-ncpl-joint-smoothness}

\begin{proof}[Proof of Lemma~\ref{lem:ncpl-joint-smoothness}]
We first rewrite \(C\) on the two regions where \(\rho(s)>0\). Define
\[
    \zeta_1(v):=1-p(v),
    \qquad\text{and}\qquad
    \zeta_2(u,v):=1-q(u)p(v),
\]
and
\[
    \alpha^{+}(t):=5-\varphi(t),
    \qquad\text{and}\qquad
    \alpha^{-}(t):=5+\varphi(-t).
\]
For \(\eta>0\), let
\[
    R_1(\eta;\y)
    :=
    \omega_0\eta^2
    \zeta_1\left(
        \frac{y_1}{\eta\sqrt{\omega_1}}
    \right),
\]
and, for \(j=2,\ldots,N\), let
\[
    R_j(\eta;\y)
    :=
    \omega_{j-1}\eta^2
    \zeta_2\left(
        \frac{y_{j-1}}{\eta\sqrt{\omega_{j-1}}},
        \frac{y_j}{\eta\sqrt{\omega_j}}
    \right).
\]
Set
\[
    R(\eta;\y):=\sum_{j=1}^N R_j(\eta;\y),
\]
and, for \(\nu\in\{+,-\}\), define
\[
    C^{\nu}(\eta,t;\y)
    :=
    -5\eta^2
    +\alpha^{\nu}(t)\eta^2
    q\left(\frac{y_N}{\eta}\right)
    -R(\eta;\y)
    -\frac12\norm{\y^-}^2.
\]
Expanding the definition of \(B\) gives
\begin{equation}
    \label{eq:C-two}
    C(s,t;\y)
    =
    \begin{cases}
        C^{+}(\rho(s),t;\y),
        & s>\frac12,\\
        C^{-}(\rho(s),t;\y),
        & s<-\frac12.
    \end{cases}
\end{equation}

Equation~\eqref{eq:C-two} reduces the analysis on the two outer regions to
uniform derivative bounds for \(C^\nu\). We first establish such
bounds for a single quadratic perspective, then apply them to the residual
sum \(R\) and the terminal term, and finally combine all terms in
\(C^\nu\). Afterward, we substitute \(\eta=\rho(s)\) and match the
resulting formulas with the central region. By
Lemma~\ref{lem:ncpl-outer-scale-bounds}, the relevant scale parameter
satisfies
\(
    0<\eta=\rho(s)\leq\norm{\rho}_{\infty}.
\)

The dependence on $\eta$ in both $R$ and the terminal term is
through expressions of the form $\eta^2Q(\boldsymbol u/\eta)$, where
$Q\in\{\zeta_1,\zeta_2,q\}$. We therefore consider
\[
    \widetilde Q(\eta,\boldsymbol u)
    :=
    \eta^2Q\left(\frac{\boldsymbol u}{\eta}\right),
    \qquad
    Q\in\{\zeta_1,\zeta_2,q\},
\]
and write \(\boldsymbol z:=\boldsymbol u/\eta\). Direct differentiation
gives
\[
\begin{aligned}
    &\nabla_{\eta}\widetilde Q
    =
    \eta\left(
        2Q(\boldsymbol z)
        -
        \langle\boldsymbol z,\nabla Q(\boldsymbol z)\rangle
    \right),\quad\text{and}\\
    &\nabla_{\boldsymbol u}\widetilde Q
    =
    \eta\nabla Q(\boldsymbol z),
\end{aligned}
\]
and, almost everywhere,
\[
\begin{aligned}
    &\nabla_{\eta\eta}\widetilde Q
    =
    2Q(\boldsymbol z)
    -
    2\langle\boldsymbol z,\nabla Q(\boldsymbol z)\rangle
    +
    \langle
        \boldsymbol z,
        \nabla^2Q(\boldsymbol z)\boldsymbol z
    \rangle,\\
    &\nabla_{\boldsymbol u\eta}\widetilde Q
    =
    \nabla Q(\boldsymbol z)
    -
    \nabla^2Q(\boldsymbol z)\boldsymbol z,\quad\text{and}\\
    &\nabla^2_{\boldsymbol u\boldsymbol u}\widetilde Q
    =
    \nabla^2Q(\boldsymbol z).
\end{aligned}
\]

For each \(Q\in\{\zeta_1,\zeta_2,q\}\), the function and its first
derivatives are uniformly bounded, as are its almost-everywhere second
derivatives. Moreover, whenever differentiation with respect to a coordinate
produces a nonzero term, that coordinate lies in a fixed transition
interval. Consequently,
\[
    \langle\boldsymbol z,\nabla Q(\boldsymbol z)\rangle,
    \qquad
    \langle
        \boldsymbol z,
        \nabla^2Q(\boldsymbol z)\boldsymbol z
    \rangle,
    \qquad\text{and}\qquad
    \nabla^2Q(\boldsymbol z)\boldsymbol z
\]
are uniformly bounded. Hence there exists a numerical constant
\(K_1\geq1\) such that, uniformly over all three choices of \(Q\),
\begin{equation}
    \label{eq:Qbound}
\begin{aligned}
    &|\widetilde Q|
    \leq K_1\eta^2,\\
    &\max\left\{
        |\nabla_\eta\widetilde Q|,
        \|{\nabla_{\boldsymbol u}\widetilde Q}\|
    \right\}
    \leq K_1\eta,\quad\text{and}\\
    &\max\left\{
        |\nabla_{\eta\eta}\widetilde Q|,
        \|{\nabla_{\boldsymbol u\eta}\widetilde Q}\|,
        \|{\nabla^2_{\boldsymbol u\boldsymbol u}\widetilde Q}\|
    \right\}
    \leq K_1.
\end{aligned}
\end{equation}

We now pass from the single-perspective estimate to the full residual chain.
Applying \eqref{eq:Qbound} and the chain rule to each \(R_j\), and
then using \eqref{eq:ncpl-weight-sums}, gives
\begin{equation}
    \label{eq:Rbound1}
\begin{aligned}
    &|R|
    <3K_1\eta^2,\\
    &|\nabla_\eta R|
    <3K_1\eta,
    \qquad
    \|{\nabla_{\y}R}\|^2
    <10K_1^2\eta^2,\\
    &|\nabla_{\eta\eta}R|
    <3K_1,
    \qquad\text{and}\quad
    \|{\nabla_{\y\eta}R}\|^2
    <10K_1^2.
\end{aligned}
\end{equation}
Furthermore, $R_1$ depends only on $y_1$, while, for
$j\geq2$, $R_j$ depends only on $y_{j-1}$ and $y_j$.
Thus \(\nabla^2_{\y\y}R\) is tridiagonal, and
\eqref{eq:monotone-weight} and \eqref{eq:Qbound} yield
\begin{equation}
    \label{eq:Ryy}
    \norm{\nabla^2_{\y\y}R}
    \leq
    \max_{j\in[N]}
    \sum_{k=1}^N
    \left|
        \nabla_{y_jy_k}R
    \right|
    \leq4K_1.
\end{equation}
Taking the gradient and Hessian here with respect to \((\eta,\y)\),
\eqref{eq:Rbound1} and \eqref{eq:Ryy} imply
\begin{equation}
    \label{eq:R-bound}
    \|{\nabla R}\|
    <5K_1\|\rho\|_\infty,
    \qquad\text{and}\qquad
    \|{\nabla^2R}\|
    <8K_1.
\end{equation}
Thus $R$ has derivative bounds independent of \(N\).

To control the full expression \(C^\nu\), we next consider its component that is dependent on $t$:
\[
    J^\nu(\eta,t;y_N)
    :=
    \alpha^\nu(t)\eta^2
    q\left(\frac{y_N}{\eta}\right).
\]
Lemma~\ref{lem:ncpl-lifted-target-bounds} gives a numerical constant
\(K_2\geq1\) such that, almost everywhere and uniformly in \(\nu\)
and \(t\),
\[
    \max\{|\alpha^\nu(t)|,
    |\nabla_t\alpha^\nu(t)|,
    |\nabla_{tt}\alpha^\nu(t)|\}
    \leq K_2.
\]
Similarly, applying \eqref{eq:Qbound} with $Q=q$ and using
$0<\eta\leq\norm{\rho}_\infty$, we obtain
\begin{equation}
    \label{eq:Tbound}
\begin{aligned}
    &|J^\nu|
    \leq K_1K_2\eta^2,\\
    &|\nabla_tJ^\nu|
    \leq K_1K_2\eta^2,\qquad\|{\nabla J^\nu}\|
    \leq
    K_1K_2
    \sqrt{2+\norm{\rho}_\infty^2}\,\eta,\quad\text{and}\\
    &\|{\nabla^2J^\nu}\|
    \leq
    K_1K_2
    \max\left\{
        2+\norm{\rho}_\infty,\,
        \norm{\rho}_\infty^2+2\norm{\rho}_\infty
    \right\}.
\end{aligned}
\end{equation}
Apart from $R$ and $J^\nu$, the remaining terms in $C^\nu$ are elementary:
\(-5\eta^2\) has constant second derivative, while
\(-\frac12\norm{\y^-}^2\) has gradient \(\y^-\), which is
\(1\)-Lipschitz. Combining these observations with
\eqref{eq:R-bound} and \eqref{eq:Tbound}, we obtain a numerical constant \(K\), independent of
\(N\), such that almost everywhere
\begin{equation}
    \norm{\nabla^2C^\nu(\eta,t;\y)}
    \leq K.\label{eq:Cnu-bound-1}
\end{equation}
Moreover, uniformly in \(\nu\), \(t\), \(\y\), and the relevant
range of \(\eta\), we have
\begin{equation}
\begin{aligned}
    &\!\left|
        C^\nu(\eta,t;\y)
        +\frac12\norm{\y^-}^2
    \right|
    \leq K\eta^2,\\
    &|\nabla_\eta C^\nu(\eta,t;\y)|
    \leq K\eta,\quad|\nabla_tC^\nu(\eta,t;\y)|
    \leq K\eta^2,\quad\text{and}\quad\|{
        \nabla_{\y}C^\nu(\eta,t;\y)-\y^-
    }\|
    \leq K\eta.
\end{aligned}\label{eq:Cnu-bound-2}
\end{equation}

We now return to the original block \(C\) by substituting
\(\eta=\rho(s)\) in \eqref{eq:C-two}. Set
\(\boldsymbol v:=(t,\y)\). On either outer region
\(|s|>\frac12\), the chain rule gives, almost everywhere,
\[
\begin{aligned}
    &\nabla_{ss}C
    =
    \nabla_{\eta\eta}C^\nu(\rho')^2
    +
    \nabla_\eta C^\nu\rho'',\\
    &\nabla_{\boldsymbol v s}C
    =
    \rho'\nabla_{\boldsymbol v\eta}C^\nu,\quad\text{and}\\
    &\nabla^2_{\boldsymbol v\boldsymbol v}C
    =
    \nabla^2_{\boldsymbol v\boldsymbol v}C^\nu,
\end{aligned}
\]
where \(\nu=+\) for \(s>\frac12\), \(\nu=-\) for
\(s<-\frac12\), and all derivatives of \(C^\nu\) are evaluated at
\((\rho(s),t;\y)\). By \eqref{eq:Cnu-bound-1},
\eqref{eq:Cnu-bound-2}, and
Lemma~\ref{lem:ncpl-outer-scale-bounds}, using
\(
    |\rho''|\leq\Lip(\rho')
\)
almost everywhere, we obtain
\[
\begin{aligned}
    &|\nabla_{ss}C|
    \leq
    K\norm{\rho'}_\infty^2
    +
    K\norm{\rho}_\infty\Lip(\rho'),\\
    &\|{\nabla_{\boldsymbol v s}C}\|
    \leq
    K\norm{\rho'}_\infty,\quad\text{and}\\
    &\|{\nabla^2_{\boldsymbol v\boldsymbol v}C}\|
    \leq K.
\end{aligned}
\]
Consequently,
\begin{equation}
    \label{eq:C-outer-Hessian-bound}
    \norm{\nabla^2C}
    \leq
    K\left(
        1+\norm{\rho'}_\infty
        +\norm{\rho'}_\infty^2
        +\norm{\rho}_\infty\Lip(\rho')
    \right)
\end{equation}
almost everywhere on both outer regions. This is a numerical bound
independent of \(N\).

This proves the required Hessian bound away from the interfaces
\(s=\pm\frac12\). On the central region, the definition of \(B\) gives
\[
    C(s,t;\y)
    =
    -\frac12\norm{\y^-}^2
    \qquad
    \text{for }
    |s|\leq\frac12,
\]
whose gradient is \(1\)-Lipschitz. Since
\(\rho(s)=\rho'(s)=0\) at \(s=\pm\frac12\), the value and
first derivative estimates for \(C^\nu\) show that the outer
formulas match the central formula continuously. In particular, when
\(s=\pm\frac12\),
\[
    \nabla C(s,t;\y)
    =
    (0,0,\y^-).
\]

Let \(L_C\) be the larger of \(1\) and the numerical bound in
\eqref{eq:C-outer-Hessian-bound}. By
Lemmas~\ref{lem:ncpl-gate-bounds},
\ref{lem:ncpl-outer-scale-bounds}, and
\ref{lem:ncpl-lifted-target-bounds}, the gradient \(\nabla C\) is locally
Lipschitz on each outer region. Since both outer regions are convex, the
almost-everywhere Hessian bound implies, by the standard mollification
argument \citep[Section~4.2.3, Theorem~5]{evansgariepy1992}, that
\(\nabla C\) is \(L_C\)-Lipschitz on each of them. By the continuity of
\(\nabla C\) across \(s=\pm\frac12\), the same estimate extends to their
closures. On the central region, the same estimate follows directly from
\[
    \nabla C(s,t;\y)=(0,0,\y^-),
    \qquad |s|\leq\frac12,
\]
since the map \(\y\mapsto\y^-\) is \(1\)-Lipschitz and \(L_C\geq1\).
Finally, split any line segment at its intersections with
\(s=\pm\frac12\). Applying the preceding estimate to each resulting piece
and summing their lengths proves that \(\nabla C\) is globally
\(L_C\)-Lipschitz. Hence \(C\) is globally \(L_C\)-smooth, where \(L_C\)
is a numerical constant independent of \(N\).
\end{proof}
\end{document}